\documentclass{birkjour_t2}
%
%
%

 \theoremstyle{definition}
 
 \theoremstyle{remark}

 \numberwithin{equation}{section}

\usepackage{graphicx}
\usepackage{amssymb, amsmath, amsthm}
\usepackage{amsfonts}
\usepackage{amssymb}
\usepackage{float}
\usepackage{mathrsfs}
\usepackage{enumitem}
\floatplacement{figure}{H}
\newtheorem{theorem}{Theorem}

\newtheorem{definition}[theorem]{Definition}

\newtheorem{lemma}[theorem]{Lemma}

\newtheorem{proposition}[theorem]{Proposition}
\newtheorem{remark}[theorem]{Remark}

\newcommand{\dis}{\displaystyle}

\newcommand{\Z}{\mathbb Z}

\newcommand{\R}{\mathbb R}


\numberwithin{equation}{section}
\numberwithin{theorem}{section}

\numberwithin{figure}{section}

\begin{document}

%
%
%
%
%
%
%
%
%

\title[Magnetogeostrophic equations]
 {Solutions to a class of forced drift-diffusion equations with applications to the magneto-geostrophic equations}

\author{Susan Friedlander}

\address{Department of Mathematics\\
University of Southern California}

\email{susanfri@usc.edu}

\author{Anthony Suen}

\address{Department of Mathematics and Information Technology\\
The Education University of Hong Kong}

\email{acksuen@eduhk.hk}

\date{}

\keywords{active scalar equations, vanishing viscosity limit, global attractor}

\subjclass{76D03, 35Q35, 76W05}

\begin{abstract}
We prove the global existence of classical solutions to a class of forced drift-diffusion equations with $L^2$ initial data and divergence free drift velocity $\{u^\nu\}_{\nu_\ge0}\subset L^\infty_t BMO^{-1}_x$, and we obtain strong convergence of solutions as the viscosity $\nu$ vanishes. We then apply our results to a family of active scalar equations which includes the three dimensional magneto-geostrophic $\{$MG$^\nu\}_{\nu\ge0}$ equation that has been proposed by Moffatt in the context of magnetostrophic turbulence in the Earth's fluid core. We prove the existence of a compact global attractor $\{\mathcal{A}^\nu\}_{\nu\ge0}$ in $L^2(\mathbb{T}^3)$ for the MG$^\nu$ equations including the critical equation where $\nu=0$. Furthermore, we obtain the upper semicontinuity of the global attractor as $\nu$ vanishes.
\end{abstract}

\maketitle
\section{Introduction}\label{Introduction}

Our motivation for addressing the limiting behaviour of a class of drift diffusion equations comes from a model proposed by Moffatt and Loper \cite{ML94}, Moffatt \cite{M08} for magnetostrophic turbulence in the Earth's fluid core. This model is derived from the full three dimensional magnetohydrodynamic equations (MHD) in the context of a rapidly rotating, densely stratified, electrically conducting fluid. For discussions about the MHD equations in geophysical contexts see, for example, \cite{A15}, \cite{C54}, \cite{D01}, \cite{KA15}, \cite{M78}, \cite{RK13}. Following the notation of Moffatt and Loper \cite{ML94}, we write the equations in terms of dimensionless variables. The orders of magnitude of the resulting non-dimensional parameters are motivated by the physical postulates of their model:
\begin{align}
R_0(\partial_t u+u\cdot\nabla u)+e_3\times u&=-\nabla P+e_2\cdot\nabla b+R_m b\cdot\nabla b+\theta e_3+\nu \Delta u,\label{1.1-1}\\
R_m[\partial_t b+u\cdot\nabla b-b\cdot\nabla u]&=e_2\cdot\nabla u+\Delta b,\label{1.1-2}\\
\partial_t\theta+u\cdot\nabla\theta&=\kappa\Delta\theta+S,\label{1.1-3}\\
\nabla\cdot u&=0,\nabla\cdot b=0\label{1.1-4}.
\end{align}
The unknowns are $u(t,x)$ the velocity, $b(t,x)$ the magnetic field (both vector valued) and $\theta(t,x)$ the scalar (temperature field of the fluid). $P$ is the sum of the fluid and magnetic pressures, and the Cartesian unit vectors are given by $e_1, e_2$ and $e_3$. The physical forces governing this system are the Coriolis force, the Lorentz force and gravity acting via buoyancy, while the equation for the temperature is driven by a smooth function $S(x)$ that represents the external forcing of the MHD system.

The non-dimensional parameters in \eqref{1.1-1}-\eqref{1.1-4} are $R_0$ the Rossby number, $R_m$ the magnetic Reynolds number, $\nu$ a (non-dimensional) viscosity and $\kappa$ a (non-dimensional) thermal diffusivity. Moffatt and Loper argue that for the geophysical context they are modelling, all these parameters are small, with $\nu$ and $\kappa$ being extremely small. We note that the ratio of the Coriolis to Lorentz forces in their model is of order 1, so for notational simplicity we have set this parameter, denoted by $N^2$ in \cite{ML94}, equal to 1. Hence in \eqref{1.1-1}-\eqref{1.1-4} we have a system derived from an important physical problem that is very rich in small parameters. The mathematical properties of this system under various settings of some of the parameters to zero, or in the vanishing limits have been addressed in a sequence of different articles, \cite{FFGR17}, \cite{FRV12}, \cite{FRV14}, \cite{FS15}, \cite{FV11a}, \cite{FV11b}, \cite{FV12}. Although the physically relevant boundary for a model of the Earth's fluid core is a spherical annulus, for mathematical tractability these studies have considered the system on the periodic domain $\mathbb{T}^3$, with all fields being mean free, a condition which is preserved by the equations. In this present paper we study the forced system \eqref{1.1-1}-\eqref{1.1-4} under these boundary conditions.

The system as investigated by Moffatt and Loper neglects the terms multiplied by $R_0$ and $R_m$ in comparison with the remaining terms. Essentially this means that the evolution equations \eqref{1.1-1} and \eqref{1.1-2} for the coupled velocity and magnetic vectors take a simplified ``quasi-static'' form. A linear relationship is then established between the vector fields and the scalar temperature $\theta$. The sole remaining nonlinearity in the system occurs in the evolution equation for $\theta$ given by \eqref{1.1-3}. This equation is then an advection-diffusion equation where the constitutive law that relates the divergence free velocity vector $u$ and the scalar $\theta$ is obtained from the reduced linear system
\begin{align}
e_3\times u&=-\nabla P+e_2\cdot\nabla b+\theta e_3+\nu\Delta u,\label{1.1-5}\\
0&=e_2\cdot\nabla u+\Delta b,\label{1.1-6}\\
\nabla\cdot u&=0, \nabla\cdot b=0.\label{1.1-7}
\end{align} 
This system encodes the vestiges of the physics in the problem, namely the Coriolis force, the Lorentz force and gravity. Vector manipulations of \eqref{1.1-5}-\eqref{1.1-7} give the expression
\begin{align}\label{1.1-8}
\left\{[\nu\Delta^2-(e_2\cdot\nabla)^2]^2+(e_3\cdot\nabla)^2\Delta\right\}u=-[\nu\Delta^2-(e_2\cdot\nabla)^2]&\nabla\times(e_3\times\nabla\theta)\notag\\
\qquad&+(e_3\cdot\nabla)\Delta(e_3\times\nabla\theta).
\end{align}
We study the forced active scalar equation
\begin{align}
\label{1.1-9} \left\{ \begin{array}{l}
\partial_t\theta+u\cdot\nabla\theta=\kappa\Delta\theta+S, \\
u=M^\nu[\theta],\theta(0,x)=\theta_0(x)
\end{array}\right.
\end{align}
via an examination of the Fourier multiplier symbol of the operator $M^\nu$ obtained from \eqref{1.1-8}. This active scalar equation is called the magnetogeostrophic (MG) equation. We also refer to \eqref{1.1-9} as the MG$^\nu$ equation when $\nu>0$, and to the case when $\nu=0$ as the MG$^0$ equation. In Section~\ref{The MG Equations}, we write the explicit expression for the Fourier multiplier symbol of $M^\nu$ obtained from \eqref{1.1-8}. We observed that the limit $\nu\rightarrow0$ is a highly singular limit: in particular, when $\nu>0$ the operator $M^\nu$ is smoothing of degree 2, however when $\nu=0$ the operator $M^0$ is singular of degree $-1$. The goals of the current article are to examine the convergence of solutions to \eqref{1.1-9} in the limit as the viscosity $\nu$ goes to zero and to study the long time behaviour of the forced system. 

Friedlander and Vicol \cite{FV11a} analyzed the unforced $S=0$ system \eqref{1.1-8}-\eqref{1.1-9} with the viscosity parameter $\nu$ set to zero, i.e. the unforced MG$^0$ equation. In this situation the drift diffusion equation \eqref{1.1-9} is critical in the sense of the derivative balance between the advection and the diffusion term. They used De Giorgi techniques to obtain global well-posedness results for the unforced critical MG$^0$ equation in a similar manner to the proof of global well-possedness given by Caffarelli and Vaseur \cite{CV10} for the critical SQG equation. In Section~\ref{Existence of smooth solutions} of this present paper
we verify that the technical details of the De Giorgi techniques are, in fact, valid for drift diffusion equations with a smooth force. This procedure leads to the proof of Theorem~\ref{Existence of smooth solutions Thm}, namely the existence of smooth classical solutions to the forced MG$^\nu$, $\nu\ge0$ equations. The existence of uniform $H^s$ bounds on these smooth solutions proved in Section~\ref{Uniform bounds on smooth solutions and proof of Theorem 1.2} implies convergence of solutions as $\nu$ vanishes which is stated in Theorem~\ref{Convergence of solutions as nu goes to 0}.

The second main result of this current paper concerns the existence of a global attractor for the critical MG$^0$ equation. The issue of the existence of a global attractor for active scalar equations is important in the general context of the long time averages of solutions to forced fluid equations (c.f., \cite{CF89}). In particular there are recent results concerning the existence of a global attractor for the dynamics of the forced critical SQG equation \cite{CD14}, \cite{CCV16}, \cite{CTV14a}, \cite{CTV14b}. In \cite{CD14}, Cheskidov and Dai prove that the forced critical SQG equation possesses a global attractor in $L^2(\mathbb{T}^3)$ provided the force $S$ is in $L^p(\mathbb{T}^3)$, $p>2$. They use ``classical'' viscosity solutions and the abstract framework of evolutionary systems introduced by Cheskidov and Foias \cite{C09}, \cite{CF06}. We prove an analogous result for the forced three dimensional critical MG$^0$ equation using the concept of a ``{\it vanishing viscosity}''{\it solution} that arises naturally from the results of Theorem~\ref{Convergence of solutions as nu goes to 0} concerning the convergence in the ``vanishing viscosity'' limit of solutions for the MG$^\nu$ and MG$^0$ equations. Our treatment of the MG$^\nu$, $\nu\ge0$, equations is novel in the sense that traditionally the existence of a global attractor is obtained from asymptotic compactness. For the MG equations the asymptotic compactness is not known a priori and the existence of the global attractor follows from
\begin{itemize}
\item[(a)] the energy equality, which implies that the energy cannot grow rapidly (see Proposition~\ref{energy equality proposition}),
\item[(b)] the absence of anomalous dissipation for complete bounded trajectories.
\end{itemize}
We note that the suggestion that this strategy might work for various equations was first proposed in \cite{CD14}.

The analysis in \cite{FV11a} of the unforced MG equations was given in the context of a class of drift diffusion equations where the divergence free drift velocity lies in $L_t^\infty BMO^{-1}$, which class includes the MG$^0$ equation. We follow their approach using De Giorgi techniques to obtain global well-posedness results for a class of forced drift diffusion equations. More specifically, we study the following active scalar equation in $\mathbb{T}^d\times(0,\infty)$ with $d\ge2$:
\begin{align}
\label{0.1} \left\{ \begin{array}{l}
\partial_t\theta^{\nu}+u^{\nu}\cdot\nabla\theta^{\nu}=\kappa\Delta\theta^{\nu}+S, \\
u_j^{\nu}=\partial_{i} T_{ij}^{\nu}[\theta^{\nu}],\theta(0,x)=\theta_0(x)
\end{array}\right.
\end{align}
where $\nu\ge0$. Here $\kappa>0$ is a diffusive constant, $\theta_0$ is the initial condition and $S=S(x)$ is a given smooth function that represents the forcing of the system. $\{T_{ij}^{\nu}\}_{\nu\ge0}$ is a sequence of operators which satisfy: 
\begin{enumerate}[label={\upshape{(1.\arabic*)}}, ref={\upshape{2.}\arabic*}, topsep=0.3cm, itemsep=0.15cm] 
\setcounter{enumi}{10}
\item $\partial_i\partial_j T^{\nu}_{ij}f=0$ for any smooth functions $f$.
\item $T_{ij}^\nu:L^\infty(\mathbb{T}^d)\rightarrow BMO(\mathbb{T}^d)$ are bounded for all $\nu\ge0$.
\item There exists a constant $C_{*}>0$ independent of $\nu$, such that for all $1\le i,j\le d$, $$\sup_{\nu\in(0,1]}\sup_{\{k\in\mathbb{Z}^d\}}|\widehat T^{\nu}_{ij}(k)|\le C_{*};$$
$$\sup_{\{k\in\mathbb{Z}^d\}}|\widehat T^0_{ij}(k)|\le C_{*},$$
where $T^0_{ij}=T_{ij}^{\nu}\Big|_{\nu=0}$.
\item For each $1\le i,j\le d$, $$\lim_{\nu\rightarrow0}\sum_{k\in\mathbb{Z}^d}|\widehat{T^{\nu}_{ij}}(k)-\widehat{T^0_{ij}}(k)|^2|\widehat{g}(k)|^2=0$$
for all $g\in L^2$.
\end{enumerate}

The main results that we prove for the forced problem are stated in the following theorems:

\begin{theorem}[Existence of smooth solutions]\label{Existence of smooth solutions Thm}
Let $\theta_0\in L^2$, $S\in C^{\infty}$ and $\kappa>0$ be given, and assume that $\{T_{ij}^{\nu}\}_{\nu\ge0}$ satisfy conditions (1.11)-(1.14). There exists a classical solution $\theta^{\nu}(t,x)\in C^{\infty}((0,\infty)\times \mathbb{T}^d)$ of \eqref{0.1}, evolving from $\theta_0$ for all $\nu\ge0$.
\end{theorem}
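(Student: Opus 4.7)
The plan is to adapt the De Giorgi iteration of Caffarelli--Vasseur \cite{CV10} and Friedlander--Vicol \cite{FV11a} to the forced setting. First I would construct a sequence of smooth approximations $\theta^{\nu,n}$---for instance by truncating the Fourier symbols of the operators $T^{\nu}_{ij}$ at frequency $n$, so that the drift $u^{\nu,n}$ becomes smooth---and for each $n$ solve the approximate problem globally by standard parabolic theory. The goal is then to establish bounds independent of $n$ that are sharp enough to pass to the limit and yield $C^\infty$ regularity on $(0,\infty)\times \mathbb{T}^d$.

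Condition (1.11) gives $\nabla\cdot u^{\nu,n}=0$, so multiplying by $\theta^{\nu,n}$ yields the basic energy identity
\[
\tfrac12\,\tfrac{d}{dt}\|\theta^{\nu,n}\|_{L^2}^2+\kappa\|\nabla\theta^{\nu,n}\|_{L^2}^2=\int_{\mathbb{T}^d} S\,\theta^{\nu,n}\,dx,
\]
and Gr\"onwall delivers uniform $L^\infty_t L^2_x\cap L^2_{t,\loc}H^1_x$ control. Next I would apply the first De Giorgi lemma to the truncations $\theta_\lambda:=(\theta^{\nu,n}-\lambda)_+$ on nested time slabs $[t_k,T]$. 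After integration by parts the drift contribution in the local energy inequality can be rewritten as $\int T^{\nu,n}_{ij}[\theta^{\nu,n}]\,\partial_j(\theta_\lambda^2\eta)\,dx$ and estimated using (1.12) and the $H^1$--$BMO$ duality of Fefferman--Stein, exactly as in \cite{FV11a}. The new forcing term $\int S\,\theta_\lambda\,dx$ is dominated by $\|S\|_{L^\infty}|\{\theta^{\nu,n}>\lambda\}|^{1/2}\|\theta_\lambda\|_{L^2}$, which is of strictly lower order in $\lambda$ than the dissipation and slots into the iteration without disturbing its structure. The outcome is an $L^\infty$ bound for $\theta^{\nu,n}$ on $[t_0,T]\times\mathbb{T}^d$, for every $t_0>0$, uniformly in $n$.

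With this $L^\infty$ bound, the second De Giorgi lemma---oscillation decrease on a shrinking family of parabolic cylinders---produces a uniform $C^\alpha$ estimate for some $\alpha\in(0,1)$. Since by (1.12) the operators $T^{\nu,n}_{ij}$ map $L^\infty$ into $BMO$, once $\theta^{\nu,n}$ is H\"older one may regard \eqref{0.1} as a linear parabolic equation with H\"older drift and smooth right-hand side, and standard Schauder theory bootstraps to $\theta^{\nu,n}\in C^\infty((t_0,T)\times \mathbb{T}^d)$ with estimates independent of $n$. Finally, the uniform energy bounds combined with Aubin--Lions compactness yield a limit $\theta^{\nu}$ that inherits the $C^\infty$ regularity and satisfies \eqref{0.1} classically on $(0,\infty)\times\mathbb{T}^d$.

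The principal obstacle is the first De Giorgi step at the critical $BMO^{-1}$ scaling: unlike for a subcritical drift, the dissipation can only barely absorb the transport contribution, so one must invoke the $H^1$--$BMO$ duality in a quantitative form together with the uniform symbol bound (1.13), and verify that the iterative argument of \cite{CV10,FV11a} still closes when the forcing $S$ is retained in the local energy inequality. The forcing itself is benign thanks to its smoothness, but the bookkeeping of how the $S$-term interacts with the De Giorgi recursion---and in particular with the geometric choice of level increments---is the key technical check to carry out in detail.
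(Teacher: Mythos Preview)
Your De~Giorgi portion is essentially the paper's argument: global $L^\infty$ from level-set energy inequalities, then local oscillation decay using the $BMO$ structure of $T_{ij}^\nu[\theta]$ via $H^1$--$BMO$ duality, with the smooth forcing $S$ contributing harmless lower-order terms. The paper organizes this as a linear proposition (Proposition~3.2) applied after-the-fact to a Leray--Hopf weak solution, whereas you mollify the symbols and pass to the limit; either route works and the estimates depend only on the uniform symbol bound~(1.13) and the $L^\infty\to BMO$ mapping property~(1.12).

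The gap is in your bootstrap. You write that once $\theta^{\nu,n}$ is H\"older, the equation becomes ``a linear parabolic equation with H\"older drift'' and ``standard Schauder theory bootstraps \ldots\ with estimates independent of $n$.'' But the drift is $u^{\nu,n}=\partial_i T^{\nu,n}_{ij}[\theta^{\nu,n}]$: the operators $T^{\nu}_{ij}$ are order zero, so from $\theta\in C^\alpha$ you only get $T_{ij}[\theta]\in C^\alpha$ uniformly, and hence $u\in C^{\alpha-1}$, which is \emph{not} a classical H\"older coefficient. For the frequency-truncated approximants the drift is of course smooth, but its H\"older norm blows up with $n$, so standard Schauder gives $n$-dependent constants and you cannot pass to the limit. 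The paper (deferring to \cite{FV11a,FV12}; see also Section~4 here) does the bootstrap by a genuinely different mechanism: Littlewood--Paley energy estimates in $\tilde L^2_t B^1_{p,r}$ that exploit the subcriticality of $C^\alpha$ relative to the natural scaling, iterating in $p$ to reach $\nabla\theta\in L^2_t L^\infty_x$, after which higher $H^s$ regularity follows from energy estimates. You should replace the Schauder appeal with this Besov-space argument, or at minimum flag that the bootstrap requires techniques tailored to $BMO^{-1}$ drifts rather than classical parabolic regularity.
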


\begin{theorem}[Convergence of solutions as $\nu\rightarrow0$]\label{Convergence of solutions as nu goes to 0}
Let $\theta_0\in L^2$, $S\in C^{\infty}$ and $\kappa>0$ be given, and assume that $\{T_{ij}^{\nu}\}_{\nu\ge0}$ satisfy conditions (1.11)-(1.14). If $\theta^{\nu},\theta$ are $C^\infty$ smooth classical solutions of the system \eqref{0.1} for $\nu>0$ and $\nu=0$ respectively with initial data $\theta_0$, then given $\tau>0$, for all $s\ge0$, we have
\setcounter{equation}{14}
\begin{align}\label{1.6-}
\lim_{\nu\rightarrow0} \|(\theta^{\nu}-\theta)(t,\cdot)\|_{ H^s}=0,
\end{align}
whenever $t\ge\tau$.
\end{theorem}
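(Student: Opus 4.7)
The plan is to run an $L^2$ energy estimate on the difference $w^\nu := \theta^\nu-\theta$, use condition (1.14) together with the uniform symbol bound (1.13) and dominated convergence to show that the forcing term this produces vanishes as $\nu\to 0$, and finally interpolate the resulting $L^2$ convergence against the uniform higher-order Sobolev bounds proved in Section~\ref{Uniform bounds on smooth solutions and proof of Theorem 1.2} to obtain the stated $H^s$ convergence.

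Subtracting the equations in \eqref{0.1} and using $\nabla\cdot u^\nu=0$ from condition (1.11), $w^\nu$ solves
\begin{equation*}
\partial_t w^\nu + u^\nu\cdot\nabla w^\nu + (u^\nu-u)\cdot\nabla\theta = \kappa\Delta w^\nu,\qquad w^\nu(0,\cdot)=0,
\end{equation*}
and the standard $L^2$ test yields
\begin{equation*}
\tfrac{1}{2}\tfrac{d}{dt}\|w^\nu\|_{L^2}^2 + \kappa\|\nabla w^\nu\|_{L^2}^2 = -\int_{\mathbb{T}^d}(u^\nu-u)\cdot\nabla\theta\,w^\nu\,dx.
\end{equation*}
Because each $T^\nu_{ij}$ is a Fourier multiplier and therefore commutes with $\partial_i$, I would decompose
\begin{equation*}
u^\nu_j-u_j = T^\nu_{ij}[\partial_i w^\nu] + (T^\nu_{ij}-T^0_{ij})[\partial_i\theta].
\end{equation*}
The first piece is controlled in $L^2$ by $C_*\|\nabla w^\nu\|_{L^2}$ via the uniform symbol bound (1.13), so after Young's inequality it can be absorbed into the diffusive term on the left. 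Setting $E^\nu(t):=\sum_{i,j}\|(T^\nu_{ij}-T^0_{ij})[\partial_i\theta(t,\cdot)]\|_{L^2}$, and using the smoothness of $\theta$ on $(0,\infty)\times\mathbb{T}^d$ to control the remaining factors involving $\theta$, one arrives at a differential inequality of the form
\begin{equation*}
\tfrac{d}{dt}\|w^\nu\|_{L^2}^2 \le A(t)\|w^\nu\|_{L^2}^2 + B(t)\,E^\nu(t)^2,
\end{equation*}
with $A, B$ locally integrable on $(0,T]$.

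The pivotal observation is that $\int_0^T B(t)E^\nu(t)^2\,dt\to 0$ as $\nu\to 0$: condition (1.14) applied pointwise in time with $g=\partial_i\theta(t,\cdot)\in L^2$ gives $E^\nu(t)\to 0$ for each $t$, while (1.13) supplies the dominating bound $E^\nu(t)\le 2C_*\|\nabla\theta(t)\|_{L^2}$, and the $L^2$ energy identity for $\theta$ (cf.\ Proposition~\ref{energy equality proposition}) makes the right side square-integrable in $t$, so the Lebesgue dominated convergence theorem applies. Gronwall combined with $w^\nu(0)=0$ then yields $\|w^\nu(t)\|_{L^2}\to 0$ as $\nu\to 0$ for every $t>0$. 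To upgrade to $H^s$, I would use the uniform $H^{s_1}$ bounds of Section~\ref{Uniform bounds on smooth solutions and proof of Theorem 1.2} for some $s_1>s$ and $t\ge\tau$, together with the interpolation inequality
\begin{equation*}
\|w^\nu(t)\|_{H^s}\le \|w^\nu(t)\|_{L^2}^{1-s/s_1}\,\|w^\nu(t)\|_{H^{s_1}}^{s/s_1},
\end{equation*}
in which the second factor is bounded uniformly in $\nu$ and the first tends to $0$; this yields \eqref{1.6-}.

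The main obstacle is closing the energy estimate on the whole time interval $[0,t]$: the coefficient $A(t)$ involves norms of $\theta$ that can degenerate as $t\to 0^+$ because only $L^2$ initial data is assumed, and the decomposition of $u^\nu-u$ must be arranged so that the $T^\nu_{ij}[\partial_i w^\nu]$ contribution can actually be absorbed into the diffusion (which is where (1.13) is essential) without leaving behind an ill-behaved prefactor. Once the Gronwall inequality is in place, the combination of (1.14) with dominated convergence is routine, and the final interpolation step is entirely standard.
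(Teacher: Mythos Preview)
Your approach is essentially identical to the paper's: the paper sets $\phi=\theta^\nu-\theta$, runs the same $L^2$ energy estimate, splits $\|u^\nu-u\|_{L^2}^2$ via Plancherel into a piece bounded by $C_*^2\|\nabla\phi\|_{L^2}^2$ (absorbed by the diffusion) and a piece $I_2$ that vanishes by condition (1.14), then applies Gr\"onwall and finishes with the interpolation inequality \eqref{2c} against the uniform $H^{s+1}$ bounds of Theorem~\ref{uniform bound on solution Thm}. The paper likewise passes over the integrability of the Gr\"onwall coefficient near $t=0$ without comment, so the concern you flag is present in both arguments.
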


\begin{theorem}[Existence of a global attractor for the MG equation]\label{Existence of a global attractor for the MG equation}
Let $\theta_0\in L^2$, $S\in C^{\infty}$ and $\kappa>0$ be given. The system \eqref{1.1-9} with $\nu=0$ possesses a compact global attractor $\mathcal{A}$ in $L^2(\mathbb{T}^3)$, namely
\begin{align*}
\mbox{$\mathcal{A}=\{\theta_0:\theta_0=\theta(0)$ for some bounded complete ``vanishing viscosity'' solution $\theta(t)\}$.}
\end{align*}
For any bounded set $\mathcal{B}\subset L^2(\mathbb{T}^3)$, and for any $\varepsilon,T>0$, there exists $t_0$ such that for any $t_1>t_0$, every ``vanishing viscosity'' solution $\theta(t)$ with $\theta(0)\in\mathcal{B}$ satisfies 
\begin{align*}
\|\theta(t)-x(t)\|_{L^2}<\varepsilon, \forall t\in[t_1,t_1+T],
\end{align*}
for some complete trajectory $x(t)$ on the global attractor $(x(t)\in\mathcal{A}, \forall t\in(-\infty,\infty))$.

Furthermore, for $\nu\in[0,1]$, there exists a compact global attractor $\mathcal{A}^{\nu}\subset L^2$ for \eqref{1.1-9} such that $\mathcal{A}^{0}=\mathcal{A}$ and $\mathcal{A}^{\nu}$ is {\it upper semicontinuous} at $\nu=0$, which means that
\begin{align}\label{uc}
\mbox{$\sup_{\phi\in\mathcal{A}^{\nu}}\inf_{\psi\in\mathcal{A}}\|\phi-\psi\|_{L^2}\rightarrow0$ as $\nu\rightarrow0$.}
\end{align}
\end{theorem}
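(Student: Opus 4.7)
The plan is to follow the evolutionary system framework of Cheskidov--Foias, adapted by Cheskidov--Dai for critical SQG, replacing their ``classical viscosity'' solutions by the \emph{vanishing viscosity} solutions supplied by Theorem~\ref{Convergence of solutions as nu goes to 0}. First I would define a vanishing viscosity solution of MG$^0$ as any limit $\theta=\lim_{\nu\to0}\theta^\nu$ of smooth MG$^\nu$ solutions produced by Theorem~\ref{Existence of smooth solutions Thm}; such $\theta$ inherits the energy equality in the limit. Using mean-freeness of the fields on $\mathbb{T}^3$ and the Poincar\'e inequality, the energy equality yields a uniform absorbing ball $B_0\subset L^2(\mathbb{T}^3)$, whose radius depends only on $\|S\|_{L^2}$ and $\kappa$, and which is entered by every vanishing viscosity solution starting in a given bounded set $\mathcal{B}$ after a time $t_0=t_0(\mathcal{B})$.

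Next I would assemble the evolutionary system $\mathcal{E}$ whose trajectory space consists of all vanishing viscosity solutions on intervals $[t_1,t_2]\subset\R$, closed under time-translation and concatenation at common endpoint values; these axioms follow from the construction, from Theorem~\ref{Convergence of solutions as nu goes to 0}, and from uniqueness in the smooth class. The Cheskidov--Foias theorem then identifies the weak global attractor as
\begin{equation*}
\mathcal{A}=\{\theta(0):\theta\text{ is a bounded complete trajectory in }\mathcal{E}\},
\end{equation*}
and upgrades it to a compact strong attractor (attracting bounded sets in $L^2$ in the sense stated in the theorem) provided two conditions hold: (a) the norm $t\mapsto\|\theta(t)\|_{L^2}^2$ is continuous along every trajectory, which is immediate from the energy equality of Proposition~\ref{energy equality proposition}; (b) no \emph{anomalous dissipation} on complete bounded trajectories, i.e.\ for any $\theta\in\mathcal{A}$, the $L^2$-energy evolves with no defect measure beyond the dissipation $\kappa\|\nabla\theta\|_{L^2}^2$ and the forcing power.

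The hard part is establishing (b). For this I would exploit the uniform $H^s$ smoothing proved in Section~\ref{Uniform bounds on smooth solutions and proof of Theorem 1.2}: on the absorbing ball $B_0$, for any $\tau>0$ and $s\ge0$, the smooth solutions $\theta^{\nu}$ of MG$^\nu$ satisfy uniform $H^s$ bounds for $t\ge\tau$, so a diagonal Aubin--Lions/Arzel\`a--Ascoli argument allows extraction of a subsequence converging strongly in $C_{\loc}(\R;H^s)$ to a complete bounded vanishing viscosity trajectory. Strong convergence of both $\theta^\nu$ and $u^\nu=M^\nu[\theta^\nu]$ (via condition (1.14)) permits passage to the limit in $u^\nu\cdot\nabla\theta^\nu$, ruling out the defect in the energy balance for complete bounded trajectories. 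This yields (b) and hence the first two assertions.

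For the existence and upper semicontinuity of $\mathcal{A}^\nu$, note that for fixed $\nu>0$ the operator $M^\nu$ is smoothing of order $2$, so standard dissipative-system theory produces a compact global attractor $\mathcal{A}^\nu\subset L^2$ with uniform bounds on $B_0$ independent of $\nu\in(0,1]$. For \eqref{uc}, given $\phi_n\in\mathcal{A}^{\nu_n}$ with $\nu_n\to0$, invariance gives complete bounded trajectories $\theta^{\nu_n}$ of MG$^{\nu_n}$ with $\theta^{\nu_n}(0)=\phi_n$, all contained in $B_0$. Applying the same diagonal extraction to $\theta^{\nu_n}$ produces a subsequential limit $\theta$ that is a complete bounded vanishing viscosity trajectory, hence $\theta(0)\in\mathcal{A}$ and $\phi_n\to\theta(0)$ along the subsequence in $L^2$; a standard contradiction argument then delivers~\eqref{uc}.
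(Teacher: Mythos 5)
Your proposal is correct and, for the existence of $\mathcal{A}$, follows essentially the paper's own strategy: the paper likewise defines ``vanishing viscosity'' solutions as $C_w([0,T];L^2)$ limits of the smooth MG$^\nu$ solutions, obtains from the energy equality an absorbing ball $\mathcal{Y}$ of radius $R>\kappa^{-1}\|S\|_{H^{-1}}$ independent of $\nu$, assembles the evolutionary system $\mathcal{E}$, and applies Theorem~4.5 of \cite{CD14} twice, verifying compactness of $\mathcal{E}([0,\infty))$ in $C([0,\infty);\mathcal{Y}_w)$ by Aubin--Lions and the weak-to-strong property of trajectory limits exactly through the uniform $H^s$ bounds of Theorem~\ref{uniform bound on solution Thm}, as you propose. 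One caveat: the energy equality is \emph{not} simply ``inherited in the limit'' --- weak convergence at the initial time yields only an inequality, and the paper instead proves Proposition~\ref{energy equality proposition} directly for the weak solution by showing the nonlinear flux $\int u\theta\cdot\nabla\theta$ vanishes, via the Littlewood--Paley/Onsager-type argument of \cite{CCF08}, \cite{CD14}; since your step (a) ultimately leans on that proposition anyway, this is a slip in derivation rather than a fatal gap. Where you genuinely diverge is the upper semicontinuity \eqref{uc}: the paper verifies the hypotheses L1--L3 of Hoang--Olson--Robinson \cite{HOR15} --- the key input being the quantitative continuity-in-$\nu$ estimate $\|\pi^{\nu_1}(t)\theta_0-\pi^{\nu_2}(t)\theta_0\|_{L^2}^2\le Ce^{Ct}\,t\,|\nu_1-\nu_2|$ of Lemma~\ref{continuity in nu lemma} --- to obtain \emph{weak} upper semicontinuity, and then upgrades weak to strong via the compactness argument of Lemma~\ref{weak implies strong}. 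Your direct route (take $\phi_n\in\mathcal{A}^{\nu_n}$, run complete bounded trajectories through them, and use the $\nu$-independent absorbing ball plus the uniform $H^s$ smoothing to extract a subsequence converging in $C_{\text{loc}}(\mathbb{R};H^{s'})$ to a complete bounded vanishing viscosity trajectory, so that $\phi_n\to\theta(0)\in\mathcal{A}$ strongly) bypasses \cite{HOR15} and the Lipschitz-in-$\nu$ estimate entirely, and is legitimate precisely because the radius of $\mathcal{Y}$ and the constants in Theorem~\ref{uniform bound on solution Thm} are independent of $\nu$ (so complete trajectories on $\mathcal{A}^{\nu}$ are uniformly smooth on compact time intervals, and condition (1.14) passes the limit through $u^{\nu_n}\cdot\nabla\theta^{\nu_n}$); indeed your compactness argument is close in spirit to the paper's own proof of Lemma~\ref{weak implies strong}. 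What each approach buys: the paper's route yields a reusable quantitative continuity estimate for the solution maps and a clean modular structure (weak continuity of attractors plus a weak-to-strong lemma), while yours is more self-contained and avoids importing the abstract continuity-of-attractors theorem.
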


Our paper is organised as follows. In Section~\ref{Preliminaries and notations}, we give some preliminaries and notations which will be used in later sections. In Section~\ref{Existence of smooth solutions}, we state and prove Theorem~\ref{Existence of smooth solutions Thm}, namely the existence of a smooth solution to \eqref{0.1}. In Section~4, we obtain a uniform $H^s$-bound on smooth solutions to \eqref{0.1} and prove Theorem~\ref{Convergence of solutions as nu goes to 0}. In Section~\ref{The MG Equations} we show that the MG$^\nu$ equation with $\nu\ge0$ satisfies the general conditions formulated for the active scalar equation in (1.10)-(1.14). Hence Theorem~\ref{Convergence of solutions as nu goes to 0} can be applied to prove convergence as $\nu\rightarrow0$ of solutions of the subcritical MG$^\nu$ equation to solutions of the critical MG$^0$ equation. In Section~\ref{The Existence of a Global Attractor}, we introduce the concept of a ``vanishing viscosity'' weak solution of the MG$^0$ equation and we prove that the forced critical MG$^0$ equation possesses a compact global attractor in $L^2(\mathbb{T}^3)$ satisfying \eqref{uc}. 

\section{Preliminaries and notations}\label{Preliminaries and notations}

In this section, we give some preliminaries and notations which are useful in later sections (also refer to \cite{D05} for more detailed discussion).

\medskip 

\noindent Throughout this paper, we shall denote $L^p([0,\infty);L^q(\mathbb{T}^d))$ by $L^p_t L^q_x$ for $1\le p,q\le\infty$, and similarly for $L^p_t BMO_{x}$, etc. Also, $L^p_{t,x}(I\times A) = L^p(I;L^p(A))$ for any $I\subset\R$ and $A\subset\mathbb{T}^d$.

\medskip

\noindent Let $\varphi$ be a smooth function valued in $[0,1]$ such that $\varphi$ is supported in the shell $\{\xi\in\R^d:\frac{1}{2}\le|\xi|\le4\}$. Denote 
\begin{align*}
h_j(x)=\sum_{k\in\tilde{\mathbb{Z}}^d}\varphi(2^{-j}k)e^{ik\cdot x},\,x\in\mathbb{T}^d,
\end{align*}
where $\tilde{\mathbb{Z}}^d$ is the dual lattice associated to $\mathbb{T}^d$. For $j\in\mathbb{Z}$, we define the periodic dyadic blocks as follows:
\begin{align*}
\Delta_jf(x)=\frac{1}{(2\pi)^d}\int_{\mathbb{T}^d}h_j(y)f(x-y)dy,\,x\in\mathbb{T}^d.
\end{align*}
For $s\in\R$ and $1\le p,q\le\infty$, the Besov norm for $B^s_{p,q}=B^s_{p,q}(\mathbb{T}^d)$ is defined as (summation over repeated indices is understood): 
\begin{align*}
\|f\|_{B^s_{p,q}}=\|2^{js}\|\Delta_j f\|_{L^p}\|_{\ell^q(\mathbb{Z})}.
\end{align*}

\noindent We also recall the Chemin-Lerner space-time Besov space $\tilde{L}^r(I; B^s_{p,q})$, with norm given by
\begin{align*}
\|f\|_{\tilde{L}^r(I; B^s_{p,q})}=\left\|2^{js}\left(\int_{I}\|\Delta_j f(t,\cdot)\|^r_{L^p} dt\right)^\frac{1}{r}\right\|_{\ell^q(\mathbb{Z})},
\end{align*}
where $s\in\R$, $1\le r,p,q\le\infty$ and $I$ is a time interval.

\medskip

\noindent We will make use of the following well-known embedding theorems in later sections (refer to \cite{BCD11} for more details).

\medskip

\noindent{\it Gagliardo-Nirenberg-Sobolev inequality:} Assume that $1\le p\le d$. There exists a constant $C>0$ depending only on $p$ and $d$, such that
\begin{align}
\|f\|_{L^{dp/(d-p)}}\le C\|Df\|_{L^p}\label{2a},
\end{align}
for all $f\in C^1_0(\mathbb{T}^d)$.

\medskip

\noindent{\it Gagliardo-Nirenberg interpolation inequality:} Fix $1\le q,r\le\infty$ and $m\in\mathbb{N}$. Suppose that $m,p,\gamma,j$ satisfy
\begin{align*}
\frac{1}{p}=\frac{j}{d}+(\frac{1}{r}-\frac{m}{d})\gamma+\frac{1-\gamma}{q},\qquad\frac{j}{m}\le\gamma\le1,
\end{align*}
where $d$ is the dimension, then
\begin{align}
\|D^j f\|_{L^p}\le C\|D^m f\|^\gamma_{L^r}\|f\|^{1-\gamma}_{L^q},\label{2b}
\end{align} 
where $C>0$ is a constant which depends only on $m, d, j, q, r,\gamma$.

\medskip

\noindent{\it Gagliardo-Nirenberg interpolation inequality for Sobolev space:} Let $q,r\in(1,\infty]$ and $\sigma,s\in(0,\infty)$ with $\sigma<s$. There exists a positive dimensional constant $C$ such that
\begin{align}
\|f\|_{ W^{\sigma,p}}\le C\|f\|^\gamma_{L^q}\|f\|^{1-\gamma}_{ W^{s,r}},\label{2c}
\end{align}
with $\frac{1}{p}=\frac{\gamma}{q}+\frac{1-\gamma}{r}$ and $\gamma=1-\frac{\sigma}{s}$.

\medskip

\noindent{\it Besov Embedding Theorem:} Let $s\in\R$. If $1\le p_1\le p_2\le\infty$ and $1\le r_1\le r_2\le\infty$, then
\begin{align}
 B^s_{p_1,r_1}\hookrightarrow B^{s-d(\frac{1}{p_1}-\frac{1}{p_2})}_{p_2,r_2}.\label{2d}
\end{align}

\section{Existence of smooth solutions}\label{Existence of smooth solutions}

In this section, we prove the existence of smooth solutions to the forced non-linear problem \eqref{0.1}. It can be stated as follows.

\begin{theorem}\label{existence of smooth solutions sub Thm}
Let $\theta_0\in L^2$, $S\in C^{\infty}$ and $\kappa>0$ be given, and assume that $\{T_{ij}^{\nu}\}_{\nu\ge0}$ satisfy conditions (1.11)-(1.14). There exists a classical solution $\theta^{\nu}(t,x)\in C^{\infty}((0,\infty)\times \mathbb{T}^d)$ of \eqref{0.1}, evolving from $\theta_0$ for all $\nu\ge0$.
\end{theorem}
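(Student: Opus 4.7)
My plan is to follow the De Giorgi scheme pioneered by Caffarelli and Vasseur for critical SQG and adapted by Friedlander and Vicol for the unforced MG$^0$ equation, verifying along the way that the smooth forcing $S$ can be incorporated at every step. I would first construct approximate solutions $\theta^{\nu,\epsilon}$ by mollifying the initial datum $\theta_0$ and the symbols $\widehat{T^\nu_{ij}}$, or alternatively by a Galerkin truncation in Fourier, so that short-time classical solutions exist by standard ODE theory. Testing the equation against $\theta^\nu$ and using the divergence-free condition built into (1.11) gives the basic identity
\begin{align*}
\tfrac{1}{2}\tfrac{d}{dt}\|\theta^\nu\|_{L^2}^2+\kappa\|\nabla\theta^\nu\|_{L^2}^2=\int_{\mathbb{T}^d} S\,\theta^\nu\,dx,
\end{align*}
from which Young's inequality yields uniform $L^\infty_tL^2_x\cap L^2_tH^1_x$ bounds independent of the regularisation and of $\nu\ge 0$.

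The first De Giorgi level then upgrades this to $L^\infty$. Testing against the truncations $(\theta^\nu-\lambda_k)_+$ with $\lambda_k=M(1-2^{-k})$ again annihilates the drift term thanks to $\nabla\cdot u^\nu=0$, and the forcing contributes at most $\|S\|_{L^\infty}|\{\theta^\nu>\lambda_k\}|$, a quantity summable through the iteration. The Caffarelli-Vasseur recursion then produces a bound $\|\theta^\nu(t,\cdot)\|_{L^\infty}\le C(\|\theta_0\|_{L^2},\|S\|_{L^\infty},\kappa,t)$ for every $t>0$, uniformly in the approximation and in $\nu\ge 0$ by (1.13). Combined with (1.12), this places $u^\nu=\partial_i T^\nu_{ij}[\theta^\nu]$ into $L^\infty_tBMO^{-1}_x$ with a bound uniform in $\nu$.

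At this point I would invoke the Hölder regularity theorem of Friedlander-Vicol for drift-diffusion equations with divergence-free $BMO^{-1}$ drift. The forcing enters the second De Giorgi level only as a bounded right-hand side, which can be absorbed into the oscillation-decay step by slightly shrinking the parabolic cylinders used to propagate the oscillation; the iteration therefore still closes and yields a uniform $C^\alpha_{t,x}$-bound on $\theta^\nu$ on $(\tau,\infty)\times\mathbb{T}^d$ for every $\tau>0$. Once $\theta^\nu$ is Hölder continuous, the Calderón-Zygmund boundedness of $\partial_i T^\nu_{ij}$ implied by (1.12) makes $u^\nu$ Hölder as well, and a standard parabolic Schauder bootstrap promotes $\theta^\nu$ to $C^\infty((\tau,\infty)\times\mathbb{T}^d)$. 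Passing to the limit in the regularisation is then immediate from the uniform bounds together with a compactness/diagonal argument.

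The step I expect to be the main obstacle is precisely verifying that the second De Giorgi level of Friedlander-Vicol, whose proof relies on delicate barrier constructions tailored to a divergence-free $BMO^{-1}$ drift, remains valid once the forcing $S$ is switched on. One has to track quantitatively how $\|S\|_{L^\infty}$ enters the upper and lower barriers and arrange the rescaling so that the nonlinear iteration in the oscillation lemma still contracts; since $S$ is smooth and bounded this should succeed by routine adaptations, but it is the technical heart of the argument and the reason the present theorem requires more than a purely formal appeal to Friedlander-Vicol.
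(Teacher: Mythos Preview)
Your proposal follows essentially the same route as the paper: energy estimates, the first De~Giorgi level to reach $L^\infty$, the $BMO^{-1}$ drift placing the problem in the Friedlander--Vicol framework, the second De~Giorgi level (oscillation decay) with the forcing tracked through the local energy inequalities, and then a bootstrap to $C^\infty$. You have also correctly identified the technical heart of the argument, namely that the barrier/oscillation lemmas survive the addition of a bounded right-hand side; the paper carries this out explicitly in its Steps~2--6.

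There is, however, one genuine inaccuracy in your sketch of the final bootstrap. You write that once $\theta^\nu\in C^\alpha$, ``the Calder\'on--Zygmund boundedness of $\partial_i T^\nu_{ij}$ implied by (1.12) makes $u^\nu$ H\"older as well, and a standard parabolic Schauder bootstrap promotes $\theta^\nu$ to $C^\infty$.'' This is not correct: $T^\nu_{ij}$ is an order-zero multiplier, so $u^\nu=\partial_i T^\nu_{ij}[\theta^\nu]$ is one derivative worse than $\theta^\nu$ and lies only in $C^{\alpha-1}$, i.e.\ in a space of negative regularity when $\alpha<1$. Consequently the drift term $u^\nu\cdot\nabla\theta^\nu$ cannot be treated as a H\"older right-hand side, and classical Schauder theory does not apply directly. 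This is precisely the ``critical'' difficulty of the problem. The paper (following \cite{FV12}) instead runs a Littlewood--Paley/Besov iteration: from $\theta^\nu\in L^\infty_t C^\alpha_x\cap L^2_t H^1_x$ one shows $\nabla\theta^\nu\in L^2_t L^\infty_x$, and only then can an $H^s$ energy estimate close and be iterated. You should replace the Schauder claim by this Besov-space mechanism; the rest of your outline stands.
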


\noindent{\bf The Linear Problem}

\medskip

\noindent Theorem~\ref{existence of smooth solutions sub Thm} can be proved by the similar method as given in \cite{FV11a} with modification for the presence of a forcing term $S$ in \eqref{0.1}. Following the proof given in \cite{FV11a}, we first consider the {\it linear problem}:
\begin{align}
\partial_t\theta +(v\cdot\nabla)\theta=\kappa\Delta\theta+S,\label{2.0.1}
\end{align}
where the velocity vector $v(t,x)=(v_1(t,x),\cdots,v_d(t,x))\in L^2((0,\infty)\times\mathbb{T}^d)$ is given, and $(t,x)\in[0,\infty)\times\mathbb{T}^d$. Additionally, let $v$ satisfies
\begin{align}
\partial_jv_j(t,x)=0\label{2.0.2}
\end{align}
in the sense of distributions. We express $v_j$ as
\begin{align}
\mbox{$v_j(t,x)=\partial_j\partial_iV_{ij}(t,x)$ in $[0,\infty)\times \mathbb{T}^d$,}\label{2.0.3}
\end{align}
and we denoted $V_{ij}=-(-\Delta)^{-1}\partial_i v_j$. The matrix $\{V_{ij}\}^d_{i,j =1}$ is given, and satisfies
\begin{align}
V_{ij}\in L^{\infty}((0,\infty);L^2(\mathbb{T}^d)\cap L^2((0,\infty);H^1(\mathbb{T}^d))\label{2.0.4}
\end{align}
for all $i,j\in\{1,...,d\}$.

\medskip

\noindent We first prove the following proposition for the existence of smooth solutions to \eqref{2.0.1}-\eqref{2.0.4}. 
\begin{proposition}
Given $\theta_0\in L^2$ and $S\in C^{\infty}$, and assume that $\{V_{ij}\}$ satisfies \eqref{2.0.4}. Let 
\begin{align}\label{2.0.2-}
\theta\in L^\infty([0,\infty); L^2(\mathbb{T}^d))\cap L^2((0,\infty);H^1(\mathbb{T}^d))
\end{align}
be a global weak solution of the initial value problem associated to \eqref{2.0.1}-\eqref{2.0.4}. If additionally we have $V_{ij}\in L^\infty([t_0,\infty);BMO(\mathbb{T}^d))$ for all $i,j\in\{1,...,d\}$ and some $t_0>0$, then there exists $\alpha>0$ such that $\theta\in C^\alpha([t_0,\infty)\times\mathbb{T}^d)$.
\end{proposition}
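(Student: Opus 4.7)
The strategy is to mimic the De Giorgi regularization argument for the unforced case treated in Friedlander--Vicol \cite{FV11a}, carefully tracking the modifications caused by the smooth forcing $S$. The work decomposes into two De Giorgi iterations: a first iteration that promotes $\theta \in L^\infty_t L^2_x \cap L^2_t H^1_x$ to $\theta \in L^\infty_{t,x}$, and a second iteration (a diminution-of-oscillation lemma) that upgrades the $L^\infty$ bound to $C^\alpha$ regularity on $[t_0,\infty)\times\mathbb T^d$.

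First I would establish the $L^\infty$ bound. For cutoffs $M_k = M(1-2^{-k})$ define $\theta_k = (\theta - M_k)_+$ and test the equation \eqref{2.0.1} by $\theta_k$ on a time slab $[T_k,\infty)$ with $T_k\uparrow t_0/2$. The diffusion term yields $\kappa\int|\nabla\theta_k|^2$; the drift term, after using $v_j = \partial_i\partial_j V_{ij}$ from \eqref{2.0.3} and the divergence-free condition \eqref{2.0.2}, can be rewritten as $-\int V_{ij}\partial_i\partial_j(\theta_k^2/2)$ and controlled by the $L^\infty_t L^2_x$ norm of $V_{ij}$ together with $\|\nabla\theta_k\|_{L^2}^2$ via Cauchy--Schwarz; the forcing contributes $\int S\,\theta_k$, which by H\"older and the smoothness of $S$ is bounded by $\|S\|_{L^\infty}\,|\{\theta_k>0\}|^{1/2}\|\theta_k\|_{L^2}$, a lower-order perturbation that is absorbed in a standard way. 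Applying the Gagliardo--Nirenberg--Sobolev inequality \eqref{2a} and the localized energy method produces a nonlinear recursion of the form $E_{k+1} \le C^k E_k^{1+\beta} + C^k |\{\theta_k>0\}|^{1+\beta'}\|S\|_{L^\infty}^2$ for appropriate quantities $E_k$; choosing $M$ sufficiently large (depending on $\|\theta_0\|_{L^2}$, $\|V_{ij}\|_{L^\infty_t L^2_x}$, $\|S\|_{L^\infty}$) forces $E_k \to 0$, giving $\theta \le M$ a.e. on $[t_0,\infty)$, and the symmetric argument with $(-\theta-M)_+$ yields the two-sided $L^\infty$ bound.

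Next I would carry out the oscillation-reduction step. Working on unit parabolic cylinders $Q_r = (-r^2,0)\times B_r$ and rescaling, the goal is to show that if $\theta\le 1$ on $Q_1$ and $|\{\theta\le 0\}\cap Q_1|\ge \mu$ for some $\mu>0$, then $\theta\le 1-\lambda$ on $Q_{1/2}$ for some $\lambda>0$. This follows from a De Giorgi isoperimetric lemma, proved via the energy inequality for the truncation $(\theta-(1-2^{-k}))_+$ on shrinking cylinders together with an intermediate-level argument that uses the $L^\infty_t BMO_x$ bound on $V_{ij}$ (which is exactly why that hypothesis appears starting at $t_0$). The crucial structural point is that in $BMO^{-1}$ drift estimates the drift contribution is absorbed by the dissipation through an integration-by-parts against $V_{ij}$, giving scaling-invariant control. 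The smooth source $S$ contributes a bounded, scaling-subcritical term in the energy inequality, which does not alter the geometric decrease of oscillation and can be absorbed for sufficiently small cylinders. Iterating the oscillation bound across dyadic scales in the standard way yields $\theta\in C^\alpha([t_0,\infty)\times\mathbb T^d)$.

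The main obstacle is the second step: one must verify that the level-set energy inequality derived in \cite{FV11a} remains valid with the extra forcing term and that the ``intermediate-level'' argument (the De Giorgi isoperimetric bound) still closes. Concretely, one has to show that $S$ does not destroy the quadratic gain that powers the nonlinear recursion. This is possible because $S\in C^\infty(\mathbb T^d)$ is bounded and the correction $\int S\,\theta_k$ is estimated by $\|S\|_{L^\infty}|\{\theta_k>0\}|$ using $\|\theta_k\|_{L^\infty}\le 2$ after the first iteration, producing only a subcritical perturbation. Everything else is a faithful adaptation of the unforced argument of \cite{FV11a}, and hence only the forcing-induced modifications need to be written out in detail.
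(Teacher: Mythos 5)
Your overall architecture --- a first De Giorgi iteration promoting $\theta\in L^\infty_t L^2_x\cap L^2_t H^1_x$ to $\theta\in L^\infty_{t,x}$ for positive times, followed by a localized oscillation-decay iteration in which the $L^\infty_t BMO_x$ bound on $V_{ij}$ absorbs the drift into the dissipation and $S$ enters as a scaling-subcritical perturbation --- is the same as the paper's proof (its Steps 1--6), which likewise adapts \cite{FV11a} by tracking the forcing, with the $L^\infty$ step following Lemma~2.3 of \cite{CD14}. There is, however, one concrete step that fails as written: your treatment of the drift term in the first iteration. You propose to rewrite the drift contribution as $-\int V_{ij}\partial_i\partial_j(\theta_k^2/2)$ and control it ``by the $L^\infty_t L^2_x$ norm of $V_{ij}$ together with $\|\nabla\theta_k\|_{L^2}^2$ via Cauchy--Schwarz.'' This does not close: $\partial_i\partial_j(\theta_k^2)$ contains second derivatives of $\theta_k$ (besides products $\partial_i\theta_k\,\partial_j\theta_k$), which are not controlled by the $H^1$ dissipation; pairing them against $V_{ij}\in L^2_x$ would require $\theta_k\in H^2$ or $\nabla\theta_k\in L^4$, neither of which is available, so nothing can be absorbed into $\kappa\|\nabla\theta_k\|_{L^2}^2$. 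Moreover, on the initial layer $(0,t_0)$ you have no $BMO$ information at all, so the global $L^\infty$ bound must not use $V_{ij}$ quantitatively.

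The repair is simpler than your estimate: in the global (whole-torus) level-set inequality the drift term vanishes identically. Since $\partial_j v_j=0$ by \eqref{2.0.2}, one writes $(v\cdot\nabla\theta)(\theta-h)_+=v\cdot\nabla\Phi(\theta)$ with $\Phi'(s)=(s-h)_+$, whence $\int (v\cdot\nabla\theta)(\theta-h)_+\,dx=0$. This is precisely why the paper's level-set inequality \eqref{2.0.5-2} contains no drift contribution and why the resulting $L^\infty$ bound \eqref{2.0.5} depends only on $\|\theta_0\|_{L^2}$ and $\|S\|_{L^\infty}$, not on $V_{ij}$. The matrix $V_{ij}$ and its $BMO$ bound enter only in the localized cylinder estimates (the first and second energy inequalities \eqref{2.0.6} and \eqref{2.0.14}), where the cutoff $\eta$ breaks the cancellation and one integrates by parts onto $\eta$ and invokes $\mathcal{H}^1$--$BMO$ duality exactly as in \cite{FV11a}; your second step describes this correctly, if sketchily. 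One further cosmetic difference: the paper's oscillation argument runs through a local maximum principle and a measure-propagation lemma (its Steps 3--5) rather than the explicit De Giorgi isoperimetric lemma you invoke, but these are interchangeable variants of the same machinery, and your accounting of the forcing there (bounded by $\|S\|_{L^\infty}$ times a measure factor, absorbed on small cylinders by subcritical scaling) matches the paper's. With the first-step drift estimate replaced by the cancellation above, your plan coincides with the paper's proof.
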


\begin{remark}
Note that for divergence-free $v\in L^2_{t,x}$, the existence of a weak solution $\theta$ to \eqref{2.0.1}-\eqref{2.0.4} evolving from $\theta_0\in L^2$ is well-known (for example, see \cite{S06} where the more general case $v\in L^1_{loc}$ is discussed, also \cite{CV10} and references therein). Here $\theta$ is a weak solution to \eqref{2.0.1}-\eqref{2.0.4} in the sense that $\theta$ satisfies \eqref{2.0.1}-\eqref{2.0.4} in a distributional sense, that is, for any $\phi\in C_0^\infty((0,T)\times\mathbb{T}^3)$,
\begin{align*}
-\int_0^T\langle\theta,\phi_t\rangle dt-\int_0^T\langle v\theta, \nabla\phi\rangle dt+\int_0^T\langle\nabla\theta,\nabla\phi\rangle dt=\langle\theta_0,\phi(0,x)\rangle+\int_0^T\langle S,\phi\rangle dt,
\end{align*}
where $\langle\cdot,\cdot\rangle$ is the standard $L^2$-inner product on $\mathbb{T}^d$. 
\end{remark}

\begin{proof}[Proof of Proposition~3.2] In view of Theorem~2.1 in \cite{FV11a}, we prove Proposition~3.2 in the following steps. Throughout the proof, we assume $\kappa\equiv1$ for simplicity. 
\medskip

\noindent {\bf Step 1:} A weak solution to \eqref{2.0.1}-\eqref{2.0.4} is bounded for positive time. In other words, there exists a positive constant $C$ such that for all $t > 0$,
\begin{align}\label{2.0.5}
\|\theta(t,\cdot)\|_{L^\infty}\le C\|S\|_{L^\infty}^{-\frac{d}{d+4}}(\|\theta_0\|_{L^2}+\|S\|_{L^\infty})(1+t^{-\frac{d}{2}}).
\end{align}
\begin{proof}
It follows by the similar method in proving Lemma~2.3 as in \cite{CD14}. First, for $t>0$, we have the energy inequalities
\begin{align}\label{energy inequality_1}
\|\theta(t,\cdot)\|_{L^2}\le C\|\theta_0\|_{L^2}+t\|S\|_{L^\infty},
\end{align}
\begin{align}\label{energy inequality_2}
\int_0^t\int|\nabla\theta(s,x)|^2dxds\le C\left[\|\theta_0\|_{L^2}^2+t\|S\|_{L^\infty}^2\right],
\end{align}
And for $h>0$, we have the following level set energy inequality for the truncated function $(\theta-h)_{+}$:
\begin{align}\label{2.0.5-2}
\int|(\theta(t_2,x)-h)_{+}|^2dx+2\int_{t_1}^{t_2}\!\!\!\int|\nabla(\theta-h)|^2\le\int|(\theta(t_1,x)-h)_{+}|^2dx+2\int_{t_1}^{t_2}\!\!\!\int|S(\theta-h)_{+}|
\end{align}
for all $0<t_1<t_2<\infty$. 

Next, we apply De Giorgi iteration method based on \eqref{2.0.5-2}. First we fix $t_0>0$ and define 
\begin{align*}
c_n=\sup_{t_n\le t\le t_0}\int|\theta_n|^2+2\int_{t_n}^{\infty}\!\!\!\int|\nabla\theta_n|^2,
\end{align*}
where $\theta_n=(\theta(t,\cdot)-h_n)_{+}$, $t_n=t_0-\frac{t_0}{2^n}$, $h_n=H-\frac{H}{2^n}$ and $H$ to be chosen later. Then we have
\begin{align}\label{2.0.5-3}
c_n&\le\frac{2^{n+1}}{t_0}\int_{t_{n-1}}^\infty\int\theta_n^2\cdot\chi_{\{\theta_n>0\}}+2\int_{t_{n-1}}^\infty\int|S\theta_n|,
\end{align}
where $\chi$ is the characteristic function and
\begin{align*}
\chi_{\{\theta_n>0\}}\le\frac{2^n}{H}\theta_{n-1}.
\end{align*} 
For the first term on the right side of \eqref{2.0.5-3}, using Gagliardo-Nirenberg inequality \eqref{2b}, it can be estimated as follows.
\begin{align*}
\frac{2^{n+1}}{t_0}\int_{t_{n-1}}^\infty\int\theta_n^2\cdot\chi_{\{\theta_n>0\}}&\le\frac{2^{n+1}}{t_0}\left(\frac{2^n}{H}\right)^{p-2}\int_{t_{n-1}}^\infty\int\theta_{n-1}^p\notag\\
&\le C\frac{2^{n(p-1)+1}}{t_0H^{p-2}}\int_{t_{n-1}}^\infty\left(\int|\theta_{n-1}|^2\right)^{\frac{p(1-\gamma)}{2}}\left(\int|\nabla\theta_{n-1}|^2\right)^{\frac{p\gamma}{2}}\notag\\
&\le \frac{C}{t_0}\cdot\frac{2^{n(p-1)+1}}{H^{p-2}}C_{n-1}^\frac{p}{2},
\end{align*}
where $\gamma=\frac{2}{p}$, $p=2(1+\frac{2}{d})$ and $C>0$ is a dimensional constant independent of $n$. Similarly, the second term on the right side of \eqref{2.0.5-3} is bounded by
\begin{align*}
\int_{t_{n-1}}^\infty\int|S\theta_n|&\le\|S\|_{L^\infty}\int_{t_{n-1}}^\infty\int|\theta_n|\cdot\chi^{p-1}_{\{\theta_n>0\}}\\
&\le\|S\|_{L^\infty}\left(\frac{2^n}{H}\right)^{p-1}\int_{t_{n-1}}^\infty\int|\theta_{n-1}|^p\\
&\le C\|S\|_{L^\infty}\left(\frac{2^n}{H}\right)^{p-1}\int_{t_{n-1}}^\infty\left(\int|\theta_{n-1}|^2\right)^{\frac{p(1-\gamma)}{2}}\left(\int|\nabla\theta_{n-1}|^2\right)^{\frac{p\gamma}{2}}\\
&\le C\|S\|_{L^\infty}\frac{2^{n(p-1)}}{H^{p-1}}c_{n-1}^\frac{p}{2}.
\end{align*}
Hence we conclude from \eqref{2.0.5-3} that
\begin{align}\label{2.0.5-4}
c_{n}\le\frac{C}{t_0}\cdot\frac{2^{n(p-1)+1}}{H^{p-2}}C_{n-1}^\frac{p}{2}+C\|S\|_{L^\infty}\frac{2^{n(p-1)}}{H^{p-1}}c_{n-1}^\frac{p}{2}.
\end{align}
We choose $H$ in \eqref{2.0.5-4} large enough so that
\begin{align}\label{value of H}
H=C\left(\frac{c_0^\frac{1}{2}}{t^\frac{d}{2}}+\|S\|_{L^\infty}^{-\frac{d}{d+4}}c_0^\frac{1}{2}\right),
\end{align}
then the nonlinear iteration inequality \eqref{2.0.5-4} implies that $c_n$ converges to 0 as $n\rightarrow\infty$. 

Hence $\theta(x,t_0)\le H$ for almost every $x\in\mathbb{T}^d$. Applying the same procedure to $-\theta$ gives a lower bound for $\theta$. To show that \eqref{2.0.5} holds, we need to estimate the term $c_0$. Using \eqref{energy inequality_2}, we have
\begin{align}\label{bound on c0}
c_0\le C\left(\|\theta_0\|_{L^2}^2+t_0\|S\|_{L^\infty}^2\right).
\end{align}
We combine \eqref{bound on c0} with \eqref{value of H} to obtain, for $t\le1$, 
\begin{align}\label{bound on theta for t<1}
\theta(t,x)\le C(\|\theta_0\|_{L^2}+\|S\|_{L^\infty})(\|S\|_{L^\infty}^{-\frac{d}{d+4}}+t^{-\frac{d}{2}}).
\end{align}
On the other hand, we fix $t=1$ in \eqref{bound on theta for t<1} and shift it by $t-1$ in time for $t>1$. Since the equation \eqref{2.0.1} is autonomous, we obtain, for $t>1$,
\begin{align}\label{bound on theta for t>1 1}
\theta(t,x)\le C(\|\theta(t-1,\cdot)\|_{L^2}+\|S\|_{L^\infty})(\|S\|_{L^\infty}^{-\frac{d}{d+4}}+1).
\end{align}
Using the energy inequality \eqref{energy inequality_1},
\begin{align*}
\|\theta(t-1,\cdot)\|_{L^2}^2\le C\left[e^{-(t-1)}\|\theta_0\|_{L^2}^2+\|S\|_{L^\infty}\right].
\end{align*}
Combining the above with \eqref{bound on theta for t>1 1} and using the fact that $e^{-(t-1)}\le\frac{1}{t}$ for $t>0$, we obtain, for $t>1$, 
\begin{align}\label{bound on theta for t>1 2}
\theta(t,x)\le C(t^{-\frac{1}{2}}\|\theta_0\|_{L^2}+\|S\|_{L^\infty})(\|S\|_{L^\infty}^{-\frac{d}{d+4}}+1).
\end{align}
Combining \eqref{bound on theta for t<1} and \eqref{bound on theta for t>1 2}, we conclude that \eqref{2.0.5} holds for $t>0$. 
\end{proof}

\noindent {\bf Step 2:} Next, we show that $\theta$ satisfies the {\it first energy inequality}, namely for any $0 <r<R$ and $h\in\R$, we have
\begin{align}\label{2.0.6}
&\|(\theta-h)_{+}\|^2_{L^\infty_t L^2_x(Q_r)}+\|\nabla(\theta-h)_{+}\|^2_{L^2_{t,x}(Q_r)}\notag\\
&\le\frac{CR}{(R-r)^2}\|(\theta-h)_{+}\|^{2-\frac{2}{d+2}}_{L^2_{t,x}(Q_R)}\|(\theta-h)_{+}\|^\frac{2}{d+2}_{L^\infty_{t,x}(Q_R)}+CR^y\|S\|^2_{L^\infty}|\{\theta>h\}\cap Q_R|^{1-\frac{1}{d+2}}\notag\\
&\qquad+\frac{CR^{\frac{y+1}{2}}\|S\|_{L^\infty}}{(R-r)}\|(\theta-h)_{+}\|_{L^2_{t,x}(Q_R)}|\{\theta>h\}\cap Q_R|^{\frac{1}{2}-\frac{1}{d+2}}
\end{align}
where $y=\frac{2(d+2)}{d}+1$, $C=C(d,\|V_{ij}\|_{L^\infty_t BMO_{x}})>0$ is a positive constant, and we have denoted $Q_{\rho} =[t_0-\rho,t_0]\times B_{\rho}(x_0)$ for $\rho>0$ and an arbitrary $(t_0,x_0)\in(0,\infty)\times\mathbb{T}^d$. Notice that by \eqref{2.0.5}, the right side of \eqref{2.0.6} is finite.
\begin{proof}
We follow the method for proving Lemma~2.6 in \cite{FV11a} and the only difference here comes from the extra forcing term $S$. Fix $h>0$ and let $0<r<R$ be such that $\frac{t_0}{2}-R^2>0$. Define $\eta(t,x)\in C^\infty_0((0,\infty)\times\mathbb{T}^d)$ to be a smooth cutoff function such that 
\begin{itemize}
\item[$\cdot$] $0\le\eta\le1$ in $(0,\infty)\times\mathbb{T}^d$;
\item[$\cdot$] $\eta\equiv1$ in $Q_r(x_0,t_0)$ and $\eta\equiv0$ in cl$\{Q^c_R(x_0,t_0)\cap\{(t,x):t\le t_0\}$;
\item[$\cdot$] $\dis|\nabla\eta|\le\frac{C}{R-r}$, \,\,$\dis|\nabla\nabla\eta|\le\frac{C}{(R-r)^2}$, \,\,$\dis|\partial_t\eta|\le\frac{C}{(R-r)^2}$ in $Q_R(x_0,t_0)/Q_r(x_0,t_0)$,
\end{itemize}
for some positive constant $C$. Define $t_1=t_0-R^2$ and let $t_2\in[t_0-r^2, t_0]$ be arbitrary. Multiply \eqref{2.0.1} by $(\theta-h)_{+}\eta^2$ and then integrate on $[t_1, t_2]\times\mathbb{T}^d$ to obtain
\begin{align}\label{2.0.7}
\int_{t_1}^{t_2}\!\!\!\int\partial_t((\theta-h)_{+}^2)\eta^2dxdt&-2\int_{t_1}^{t_2}\!\!\!\int\partial_{jj}(\theta-h)_{+}(\theta-h)_{+}\eta^2dxdt+\int_{t_1}^{t_2}\!\!\!\int\partial_i V_{ij}\partial_j ((\theta-h)_{+}^2)\eta^2dxdt\notag\\
&=\int_{t_1}^{t_2}\!\!\!\int(\theta-h)_{+}\eta^2Sdxdt.
\end{align}
From the estimates as shown in \cite{FV11a}, it follows from \eqref{2.0.7} that
\begin{align}\label{2.0.8}
&\|(\theta-h)_{+}\|^2_{L^\infty_t L^2_x(Q_r)}+\|\nabla(\theta-h)_{+}\|^2_{L^2_{t,x}(Q_r)}\notag\\
&\le\frac{CR}{(R-r)^2}\|(\theta-h)_{+}\|^{2-\frac{2}{d+2}}_{L^2_{t,x}(Q_R)}\|(\theta-h)_{+}\|^\frac{2}{d+2}_{L^\infty_{t,x}(Q_R)}+\left|\int_{t_1}^{t_2}\!\!\!\int(\theta-h)_{+}\eta^2Sdxdt\right|.
\end{align}
Using the Gagliardo-Nirenberg-Sobolev inequality \eqref{2a} for $\eta(\theta-h)_+\in H^1_0$, the second term on the right side of \eqref{2.0.8} can be bounded by
\begin{align*}
\left|\int_{t_1}^{t_2}\!\!\!\int(\theta-h)_{+}\eta^2Sdxdt\right|&\le\|S\|_{L^\infty}\left(\int_{t_1}^{t_2}\!\!\!\int|\eta(\theta-h)_{+}|^\frac{2d}{d-2}\right)^\frac{d-2}{2d}|\{\theta>h\}\cap Q_R|^{\frac{1}{2}+\frac{1}{d}}\notag\\
&\le C\|S\|_{L^\infty}\left(\int\!\!\!\int_{Q_R}|\nabla(\eta(\theta-h))_{+}|^2\right)^\frac{1}{2}|\{\theta>h\}\cap Q_R|^{\frac{1}{2}+\frac{1}{d}}\notag\\
&\le C\|S\|_{L^\infty}\left(\int\!\!\!\int_{Q_R}|\nabla(\theta-h)_{+}|^2\right)^\frac{1}{2}|\{\theta>h\}\cap Q_R|^{\frac{1}{2}-\frac{1}{2(d+2)}}|Q_R|^{\frac{1}{2(d+2)}+\frac{1}{d}}\notag\\
&\qquad+\frac{C\|S\|_{L^\infty}}{(R-r)}\left(\int\!\!\!\int_{Q_R}|(\theta-h)_{+}|^2\right)^\frac{1}{2}|\{\theta>h\}\cap Q_R|^{\frac{1}{2}-\frac{1}{d+2}}|Q_R|^{\frac{1}{d+2}+\frac{1}{d}}\notag\\
&\le C\|S\|_{L^\infty}\left(\int\!\!\!\int_{Q_R}|\nabla(\theta-h)_{+}|^2\right)^\frac{1}{2}|\{\theta>h\}\cap Q_R|^{\frac{1}{2}-\frac{1}{2(d+2)}}R^{\frac{d+2}{d}+\frac{1}{2}}\notag\\
&\qquad+\frac{C\|S\|_{L^\infty}}{(R-r)}\left(\int\!\!\!\int_{Q_R}|(\theta-h)_{+}|^2\right)^\frac{1}{2}|\{\theta>h\}\cap Q_R|^{\frac{1}{2}-\frac{1}{d+2}}R^{\frac{d+2}{d}+1},
\end{align*}
and hence using H\"{o}lder's inequality, the term $\dis\left(\int\!\!\!\int_{Q_R}|\nabla(\theta-h)_{+}|^2\right)^\frac{1}{2}$ can be absorbed by the left side of \eqref{2.0.8} and hence \eqref{2.0.6} follows.
\end{proof}

\noindent {\bf Step 3:} We give an estimate on the supremum of $\theta$ on a half cylinder in terms of the supremum on the full cylinder. Assume that $h_0\le\sup_{Q_{r_0}}\theta$, where $r_0>0$ is arbitrary, then we have
\begin{align}
\sup_{Q_{\frac{r_0}{2}}}\theta\le h_0+C\left(\frac{|\{\theta>h_0\}\cap Q_{r_0}|^\frac{1}{d+2}}{r_0}\right)^\frac{1}{2}\left(\sup_{Q_{r_0}}\theta-h_0\right)\label{2.0.9}
\end{align}
for some positive constant $C=C(d,\|V_{ij}\|_{L^\infty_t BMO_{x}})$.
\begin{proof}
To facilitate the proof, we first introduce the following notations:
\begin{itemize}
\item[$\cdot$] $A(h,r)=\{\theta>h\}\cap Q_r$,
\item[$\cdot$] $a(h,r)=|A(h,r)|$,
\item[$\cdot$] $b(h,r)=\|(\theta-h)_{+}\|^2_{L^2_{t,x}(Q_r)},$
\item[$\cdot$] $M(r)=\sup_{Q_r}\theta,$
\item[$\cdot$] $m(r)=\inf_{Q_r}\theta.$
\end{itemize}
Let $0<r<R$ and $0<h<H$. By the definitions of $a$ and $b$, we have
\begin{align}
a(H,r)\le\frac{b(h,r)}{(H-h)^2}.\label{2.0.10}
\end{align}
Following the proof of Lemma~2.10 in \cite{FV11a}, using \eqref{2.0.10} and the first energy inequality \eqref{2.0.6} as proved in Step~2, we have
\begin{align}\label{2.0.11}
b(h,r)\le &Ca(h,r)^\frac{2}{d+2}\frac{R}{(R-r)^2}b(h,R)^{1-\frac{1}{d+2}}\|(\theta-h)_{+}\|^{\frac{2}{d+2}}_{L^\infty_{t,x}(Q_R)}\notag\\
&+Ca(h,r)^\frac{2}{d+2}\|S\|^2_{L^\infty}\frac{R^y}{(H-h)^2}b(h,R)^{1-\frac{1}{d+2}}(H-h)^\frac{2}{d+2}\notag\\
&+Ca(h,r)^\frac{2}{d+2}\|S\|_{L^\infty}\frac{R^{\frac{y+1}{2}}}{(H-h)(R-r)}b(h,R)^{1-\frac{1}{d+2}}(H-h)^\frac{2}{d+2}.
\end{align}
By combining \eqref{2.0.10} and \eqref{2.0.11}, we obtain
\begin{align}\label{2.0.12}
b(H,r)\le&\frac{CR}{(H-h)^\frac{4}{d+2}(R-r)^2}b(h,R)^{1+\frac{1}{d+2}}\|(\theta-h)_{+}\|^{\frac{2}{d+2}}_{L^\infty_{t,x}(Q_R)}\notag\\
&+\frac{CR^y\|S\|^2_{L^\infty}}{(H-h)^\frac{4}{d+2}(H-h)^2}b(h,R)^{1+\frac{1}{d+2}}(H-h)^\frac{2}{d+2}\notag\\
&+\frac{CR^{\frac{y+1}{2}}\|S\|_{L^\infty}}{(H-h)^\frac{4}{d+2}(H-h)(R-r)}b(h,R)^{1+\frac{1}{d+2}}(H-h)^\frac{2}{d+2}.
\end{align}
We now apply the De Giorgi iteration method based on \eqref{2.0.12}. Let $r_n=\frac{r_0}{2} + \frac{r_0}{2^{n+1}}$, $h_n = h_\infty-\frac{(h_\infty-h_0)}{2^n}$, and $b_n = b(h_n, r_{n+1})$, for all $n\ge0$, where $r_0$ and $h_0$ are as given and $h_\infty>0$ is to be chosen later. By letting $H=h_{n+1}$, $h=h_n$, $r=r_{n+2}$, and $R=r_{n+1}$ in \eqref{2.0.12}, we have
\begin{align}\label{2.0.13}
b_{n+1}\le\left[\frac{C(M(r_0)-h_0)^\frac{2}{d+2}}{(h_\infty-h_0)^\frac{4}{d+2}r_0}+\frac{Cr^y_0\|S\|^2_{L^\infty}}{(h_\infty-h_0)^{\frac{2}{d+2}+2}}+\frac{Cr^{\frac{y}{2}-1}_0\|S\|_{L^\infty}}{(h_\infty-h_0)^{\frac{2}{d+2}+1}}\right]2^{n(2+\frac{4}{d+1})}b_n^{1+\frac{1}{d+2}}.
\end{align}
We let $B=2^{4+2(d+2)}$ and choose $h_\infty$ large enough so that
\begin{align*}
\left[\frac{C(M(r_0)-h_0)^\frac{2}{d+2}}{(h_\infty-h_0)^\frac{4}{d+2}r_0}+\frac{Cr^y_0\|S\|^2_{L^\infty}}{(h_\infty-h_0)^{\frac{2}{d+2}+2}}+\frac{Cr^{\frac{y}{2}-1}_0\|S\|_{L^\infty}}{(h_\infty-h_0)^{\frac{2}{d+2}+1}}\right]b_0^{1+\frac{1}{d+2}}\le\frac{1}{B},
\end{align*}
then by induction, we obtain from \eqref{2.0.13} that $\dis b_n\le\frac{b_0}{B^n}$ for all $n\in\mathbb{N}$. The rest follows from the argument given in \cite{FV11a} and we omit the details here.
\end{proof}

\noindent {\bf Step 4:} We have the following {\it second energy inequality} in controlling the
possible growth of level sets of the solution: fix an arbitrary $x_0\in\mathbb{T}^d$, let $h\in\R$, $0<r<R$, and $0<t_1<t_2$. Then we have
\begin{align}\label{2.0.14}
\|(\theta(t_2,\cdot)-h)_{+}\|^2_{L^2(B_r)}\le\|(\theta(t_1,\cdot)-h)_{+}\|^2_{L^2(B_r)}+\frac{C_0 R^d(t_2-t_1)}{(R-r)^2}\|(\theta-h)_+\|^2_{L^\infty((t_1,t_2)\times B_R)},
\end{align}
where $C_0=C_0(d,\|V_{ij}\|_{L^\infty_t BMO_{x}},\|S\|_{L^\infty})$ is a sufficiently large positive constant. Notice that by \eqref{2.0.5}, the right side of \eqref{2.0.14} is finite.
\begin{proof}
Similar to the first energy inequality, we follow the method for proving Lemma~2.11 in \cite{FV11a} and the only difference here comes from the forcing term $S$. Given $h,r,R,t_1,t_2$, we define $\eta\in C^\infty_0$ to be a smooth cutoff function such that 
\begin{itemize}
\item[$\cdot$] $0\le\eta\le1$;
\item[$\cdot$] $\eta\equiv1$ on $B_r$ and $\eta\equiv0$ on $B_R^c$;
\item[$\cdot$] $\dis|\nabla\eta(x)|\le\min\Big\{\frac{C}{R-r},\frac{CR}{(R-r)^2}\|(\theta-h)_{+}\|_{L^\infty((t_1,t_2)\times B_R)}\Big\}$ for all $x\in\mathbb{T}^d$,
\end{itemize}
for some positive constant $C$. Multiply \eqref{2.0.1} by $(\theta-h)_{+}\eta^2$ and integrate on $[t_1, t_2]\times\mathbb{T}^d$, it follows from the estimates given in \cite{FV11a} that
\begin{align}\label{2.0.15}
\|(\theta(t_2,\cdot)-h)_{+}\|^2_{L^2(B_r)}\le&\|(\theta(t_1,\cdot)-h)_{+}\|^2_{L^2(B_r)}+\frac{C R^d(t_2-t_1)}{(R-r)^2}\|(\theta-h)_+\|^2_{L^\infty((t_1,t_2)\times B_R)}\notag\\
&+\left|\int_{t_1}^{t_2}\!\!\!\int(\theta-h)_{+}\eta^2Sdxdt\right|,
\end{align}
where $C=C(d,\|V_{ij}\|_{L^\infty_t BMO_{x}})$ is a positive constant. Using the Gagliardo-Nirenberg-Sobolev inequality \eqref{2a} for $\eta\in C^\infty_0$, we bound the far right side of the above as follows.
\begin{align}\label{2.0.16}
\left|\int_{t_1}^{t_2}\!\!\!\int(\theta-h)_{+}\eta^2Sdxdt\right|&\le\|(\theta-h)_+\|_{L^\infty((t_1,t_2)\times B_R)}\|S\|_{L^\infty}(t_1-t_2) \left(\int_{B_R}|\eta|^\frac{2d}{d-2}\right)^\frac{d-2}{2d}|B_R|^{\frac{1}{2}-\frac{1}{d}}\notag\\
&\le CR^{\frac{d}{2}-1}\|(\theta-h)_+\|_{L^\infty((t_1,t_2)\times B_R)}\|S\|_{L^\infty}(t_1-t_2)\left(\int_{B_R}|\nabla\eta|^\frac{1}{2}\right)^\frac{1}{2}\notag\\
&\le CR^{\frac{d}{2}-1+\frac{d}{2}}\|(\theta-h)_+\|^2_{L^\infty((t_1,t_2)\times B_R)}\|S\|_{L^\infty}(t_1-t_2)\frac{CR}{(R-r)^2}\notag\\
&=\frac{C\|S\|_{L^\infty}R^d(t_2-t_1)}{(R-r)^2}\|(\theta-h)_+\|^2_{L^\infty((t_1,t_2)\times B_R)}.
\end{align}
By using \eqref{2.0.16} on \eqref{2.0.15}, we can choose $C_0=C_0(d,\|V_{ij}\|_{L^\infty_t BMO_{x}},\|S\|_\infty)>0$ sufficiently large enough so that \eqref{2.0.14} holds.
\end{proof}

\noindent {\bf Step 5:} Using the second energy inequality \eqref{2.0.14}, we can bound $|\{\theta(t_2,\cdot)\ge H\}\cap B_R|/|B_R|$ whenever $|\{\theta(t_1,\cdot)\ge H\}\cap B_r|/|B_r|\le\frac{1}{2}$. Fix $\kappa_0=(\frac{4}{5})^\frac{1}{d}$, let $n_0\ge2$ be the least integer such that $\frac{2^{n_0}}{2^{n_0}-2}\le\sqrt{\frac{6}{5}}$, and let $\delta_0=\frac{(1-\kappa_0)^2}{12C_0\kappa_0}$ where $C_0$ is the constant from \eqref{2.0.14}. For $t_1,R>0$, if 
\begin{align*}
|\{\theta(t_1,\cdot)\ge H\}\cap B_r|\le\frac{1}{2}|B_r|,
\end{align*}
then for all $t_2\in[t_1,t_1+\delta_0r^2]$ we have
\begin{align}\label{2.0.17}
|\{\theta(t_2,\cdot)\ge H\}\cap B_R|\le\frac{7}{8}|B_R|,
\end{align}
where we define $r=r_0R$, \,\,\,$M=\sup_{(t_1,t_1+\delta_0R^2)\times B_R}\theta$, \,\,\,$m=\inf_{(t_1,t_1+\delta_0R^2)\times B_R}\theta$, \,\,\,$h=\frac{(M+m)}{2}$ and \newline$H=M-\frac{(M-m)}{2^{n_0}}$.
\begin{proof}
By using \eqref{2.0.14}, the proof of \eqref{2.0.17} follows by the same argument given by the proof of Lemma~2.12 in \cite{FV11a} and we omit the details here.
\end{proof}

\noindent {\bf Step 6:} Applying Step~1 to Step~5, the proof of Proposition~3.2 now follows by showing that there exists $\beta\in(0,1)$ independent of $R$, such that 
\begin{align}\label{2.0.18}
osc(Q_1)\le\beta osc(Q_2), 
\end{align} 
where $Q_1 = [t_1,t_1 +\delta_0r^2]\times B_r$, $Q_2 = [t_1,t_1 +\delta_0R^2]\times B_R$ and $osc(Q)=\sup_{Q}\theta-\inf_{Q}\theta$. Here $\kappa_0,\delta_0,n_0,M,m,h,H,r,R$ are defined as in Step 5, and we recall that $t_1 >0$ and $R>0$ are arbitrary. We refer the reader to (\cite{FV11a}, pp. 293--294) for details in proving \eqref{2.0.18}. The estimate \eqref{2.0.18} implies the H\"{o}lder regularity of the solution (the H\"{o}lder exponent $\alpha\in(0, 1)$ may be calculated explicitly from $\beta$) which finishes the proof of Proposition~3.2.
\end{proof}

\noindent{\bf The Nonlinear Problem}

\medskip

\noindent We now focus back on the non-linear problem \eqref{0.1} and give the proof of Theorem~\ref{existence of smooth solutions sub Thm}. 

\begin{proof}[Proof of Theorem~\ref{existence of smooth solutions sub Thm}]
To begin with, we notice that given $\theta_0\in L^2$ and $\nu\ge0$, there exists a global-in-time Leray-Hopf weak solution $\theta^\nu$ of \eqref{0.1} evolving from $\theta_0$ (a proof for it can be found in \cite{FV11a}). Using the same method as in proving \eqref{2.0.5}, we have $\theta^\nu\in L^\infty_{t ,x}$ and it follows from the Calderón-Zygmund theory of singular integrals that $T^\nu_{ij}\theta=: V^\nu_{ij}\in L^\infty ([t_0 ,\infty); BMO)$, for any $t_0 > 0$ and $\nu\ge0$, where $i,j \in\{1,...,d\}$. Therefore, we may treat \eqref{0.1} as a linear evolution equation (see also \cite{CV10}, \cite{CW09}, \cite{FV11a}), where the divergence-free velocity field $u$ is given, and $u\in L^2((0,\infty); L^2)\cap L^\infty([t_0, \infty); BMO^{-1})$, for any $t_0 > 0$. This is precisely the setting of Proposition~3.2 for the linear evolution equation and it can be applied to the nonlinear problem \eqref{0.1} to give H\"{o}lder regularity of the solution. Finally, since H\"{o}lder regularity is sub-critical for the natural scaling of \eqref{0.1}, one may bootstrap to prove that the solution is in a higher regularity class. We refer to \cite{FV11a} for further details and conclude the proof of Theorem~\ref{existence of smooth solutions sub Thm}.
\end{proof}

\section{Uniform bounds on smooth solutions and proof of Theorem~1.2}\label{Uniform bounds on smooth solutions and proof of Theorem 1.2}

\subsection{Uniform bounds on smooth solutions}

\noindent We have the following uniform $H^s$-bound on smooth solutions to \eqref{0.1} which will be used in proving Theorem~\ref{Convergence of solutions as nu goes to 0}.

\begin{theorem}\label{uniform bound on solution Thm}
Assume that the hypotheses and notations of Theorem~\ref{existence of smooth solutions sub Thm} are in force. Then given $0<t_1<t_2$ and $s\ge0$, there exists a positive constant $C(C_{*},t_1,t_2,s,d,\kappa,S,\|\theta_0\|_{L^2})>0$ independent of $\nu$ such that
\begin{align}\label{2.1}
\sup_{t\in[t_1,t_2]}\|\theta^{\nu}(t,\cdot)\|_{ H^s}+\int_{t_1}^{t_2}\|\theta^{\nu}(t,\cdot)\|^2_{H^{s+1}}dt\le C(C_{*},t_1,t_2,s,d,\kappa,S,\|\theta_0\|_{L^2}),
\end{align}
where $C_{*}>0$ is the constant as stated in condition (1.13).
\end{theorem}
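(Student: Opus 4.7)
The strategy is a parabolic bootstrap engineered to be uniform in $\nu$. First I would establish the $L^2$ energy identity
\[
\frac{1}{2}\frac{d}{dt}\|\theta^\nu\|_{L^2}^2 + \kappa\|\nabla\theta^\nu\|_{L^2}^2 = \int S\,\theta^\nu\,dx,
\]
obtained by testing \eqref{0.1} against $\theta^\nu$ and using $\partial_j u^\nu_j=0$ from condition (1.11); Gr\"onwall on a bounded time interval then delivers control of $\theta^\nu$ in $L^\infty_t L^2_x\cap L^2_t H^1_x$ depending only on $\|\theta_0\|_{L^2}$, $\|S\|_{L^\infty}$, $t_2$ and $d$, hence independent of $\nu$. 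Next, the pointwise $L^\infty$ estimate \eqref{2.0.5} produced by Step~1 of the proof of Proposition~3.2 is by construction uniform in $\nu$ and yields $\|\theta^\nu(t,\cdot)\|_{L^\infty}\le C(\|\theta_0\|_{L^2},\|S\|_{L^\infty})(1+t^{-d/2})$ for every $t>0$, so on $[t_1/2,t_2]$ the family $\{\theta^\nu\}$ is uniformly bounded in $L^\infty_{t,x}$.

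For the $H^s$ step I would proceed by induction, assuming the claimed bound at some regularity level $\sigma$ and raising it to $\sigma+1$. Apply $\Lambda^\sigma=(-\Delta)^{\sigma/2}$ to \eqref{0.1}, pair with $\Lambda^\sigma\theta^\nu$, and use $\nabla\cdot u^\nu=0$ to rewrite the transport contribution as a commutator,
\[
\int \Lambda^\sigma(u^\nu\cdot\nabla\theta^\nu)\,\Lambda^\sigma\theta^\nu\,dx
= \int [\Lambda^\sigma,u^\nu]\cdot\nabla\theta^\nu\,\Lambda^\sigma\theta^\nu\,dx.
\]
A Kato--Ponce commutator estimate, combined with the structural relation $u^\nu_j=\partial_i T^\nu_{ij}\theta^\nu$ and condition (1.13), which ensures $T^\nu_{ij}:H^\tau\to H^\tau$ is bounded with operator norm $\le C_*$ uniformly in $\nu$, bounds the commutator by a product in which a top-order factor $\|\Lambda^{\sigma+1}\theta^\nu\|_{L^2}$ appears linearly and can be absorbed into the parabolic dissipation $\kappa\|\Lambda^{\sigma+1}\theta^\nu\|_{L^2}^2$ on the left via Young's inequality, at the price of a polynomial in $\|\theta^\nu\|_{L^\infty}$ and in the $H^\sigma$ norm already controlled by induction; interpolation inequalities \eqref{2b}--\eqref{2c} are used to juggle intermediate norms against $\|\theta^\nu\|_{L^\infty}$. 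To launch the induction one exploits the parabolic smoothing established above ($\nabla\theta^\nu\in L^2_{t,x}$) to select $t_*\in(0,t_1/2)$ for which $\theta^\nu(t_*)\in H^1$ with a uniform bound in $\nu$ (possible for a.e.\ such $t_*$, and quantitatively by an averaging argument), then run Gr\"onwall from $t_*$; the resulting ODE for $\|\Lambda^\sigma\theta^\nu\|_{L^2}^2$ gives both the pointwise $L^\infty_tH^s$ and the integrated $L^2_tH^{s+1}$ halves of \eqref{2.1}. Non-integer $s$ follows by interpolation between consecutive integer levels.

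The main obstacle is preserving uniformity in $\nu$ at every stage despite the singular nature of the limit $\nu\to 0$ emphasised in the introduction. The point that makes this possible is structural: the argument never invokes smoothness of the symbol of $T^\nu_{ij}$, nor any decay at infinity, but only the uniform pointwise bound (1.13). Consequently the commutator estimates cost exactly one derivative (since $\|u^\nu\|_{H^\tau}\lesssim C_*\|\theta^\nu\|_{H^{\tau+1}}$ uniformly), and the bootstrap closes with constants depending only on $C_*$, $t_1$, $t_2$, $s$, $d$, $\kappa$, $S$ and $\|\theta_0\|_{L^2}$. This $\nu$-independence of \eqref{2.1} is precisely what is needed to pass to the vanishing-viscosity limit in Theorem~\ref{Convergence of solutions as nu goes to 0}.
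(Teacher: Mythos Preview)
Your bootstrap does not close at the base step, and the obstruction is exactly the critical scaling of \eqref{0.1} that the paper is built around. With $u^\nu_j=\partial_i T^\nu_{ij}\theta^\nu$ and $|\widehat{T^\nu_{ij}}|\le C_*$, the drift costs one full derivative of $\theta^\nu$, so the transport term $u^\nu\cdot\nabla\theta^\nu$ has the same order as $\Delta\theta^\nu$. Concretely, at $\sigma=1$ the commutator reduces to $\int\partial_k u^\nu_j\,\partial_j\theta^\nu\,\partial_k\theta^\nu$, and any H\"older splitting forces either $\|\nabla u^\nu\|_{L^\infty}$ or $\|\nabla\theta^\nu\|_{L^\infty}$ to appear (this is also what Kato--Ponce produces). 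If instead you place $\partial_k u^\nu$ in $L^2$ and the remaining two gradients in $L^4$, Gagliardo--Nirenberg gives $\|\nabla\theta^\nu\|_{L^4}^2\lesssim \|\theta^\nu\|_{L^\infty}\|\Lambda^2\theta^\nu\|_{L^2}$, so the trilinear term is $\lesssim C_*\|\theta^\nu\|_{L^\infty}\|\Lambda^2\theta^\nu\|_{L^2}^2$, which is \emph{quadratic} in the top order and cannot be absorbed by $\kappa\|\Lambda^2\theta^\nu\|_{L^2}^2$ without a smallness assumption. No interpolation against $\|\theta^\nu\|_{L^\infty}$ and $\|\theta^\nu\|_{H^1}$ can fix this; the scaling is exactly balanced.

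What the paper does instead is use the De~Giorgi machinery of Section~\ref{Existence of smooth solutions} not merely for the $L^\infty$ bound you invoke, but for the \emph{uniform $C^\alpha$ regularity} of $\theta^\nu$ (Proposition~3.2), which is subcritical. From $\theta^\nu\in L^\infty_t C^\alpha_x$ uniformly in $\nu$ they run a Littlewood--Paley/Besov bootstrap (equations \eqref{2.2.0}--\eqref{2.2.07}, following \cite{FV12}) to upgrade to $\nabla\theta^\nu\in L^2(I;L^\infty)$ with a $\nu$-independent bound \eqref{2.3}. Only then does the $H^1$ energy estimate \eqref{2.4} close, because the trilinear term now reads $C_*\|\Delta\theta^\nu\|_{L^2}\|\nabla\theta^\nu\|_{L^2}\|\nabla\theta^\nu\|_{L^\infty}$ with the top order appearing linearly. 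Your higher-$\sigma$ induction is fine once this base step is in place, but the H\"older input and the Besov iteration to $\nabla\theta^\nu\in L^2_tL^\infty_x$ are the missing ingredients; $L^\infty$ control of $\theta^\nu$ alone is not enough for a critical drift.
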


\begin{proof}[Proof of Theorem~\ref{uniform bound on solution Thm}] 
Fix $I=[t_1,t_2]$ for some $0<t_1<t_2$. By Theorem~\ref{existence of smooth solutions sub Thm}, there exists $\alpha\in(0,1)$ such that for each $\nu\ge0$, 
\begin{align}\label{2.2}
\theta^{\nu}\in L^{\infty}(I;L^2(\mathbb{T}^d)\cap L^2(I; H^1(\mathbb{T}^d))\cap L^{\infty}(I; C^{\alpha}(\mathbb{T}^d)).
\end{align}
Depending on the value of $\alpha$, we consider the following 2 cases:

\medskip

\noindent{\bf Case~1: $\alpha\in(0,\frac{1}{2}]$.} The proof is based on the one given in \cite{FV12} and we just need to take extra care of the forcing term $S$. It is given in the following steps:

\medskip

\noindent{\bf Step~(i):} Assume further that
\begin{align}
\theta^\nu\in L^2(I;  B^1_{p,2}(\mathbb{T}^d))
\end{align}
for some $p\ge2$, then we have
\begin{align}\label{2.2.00}
\theta^\nu\in\tilde{L}^2(I; B^1_{q,r}(\mathbb{T}^d))
\end{align}
for all $1\le r\le\infty$, and for all $q\in(p,m_\alpha p)$, where $m_\alpha=\frac{1-\alpha}{1-2\alpha}>1$.
\begin{proof}
Let $\Delta_j$ be as defined in Section~\ref{Preliminaries and notations}. We apply $\Delta_j$ to \eqref{0.1}, multiply by $\Delta_j\theta|\Delta_j\theta|^{q-2}$, integrate over $\mathbb{T}^d$, and use Proposition~29.1 in \cite{L02} (also refer to \cite{CMZ07}) to obtain, for $j\in\mathbb{Z}$,
\begin{align}\label{2.2.0}
\frac{1}{q}\frac{d}{dt}\|\Delta_j\theta^\nu\|^q_{L^q}+C2^{2j}\|\Delta_j\theta^\nu\|^q_{L^q}\le\left|\int\Delta_j(u^\nu\cdot\theta^\nu)\Delta_j\theta^\nu|\Delta_j\theta^\nu|^{q-2}\right|+\left|\int\Delta_j(S)\Delta_j\theta^\nu|\Delta_j\theta^\nu|^{q-2}\right|,
\end{align}
where $C=C(d,q)>0$ is a positive constant independent of $\nu$. Using H\"{o}lder inequality, the second term on the right side of \eqref{2.2.0} is bounded by $\dis\|\Delta_j\theta^\nu\|^{q-1}_{L^q}\|\Delta_j(S)\|_{L^q}$. Hence applying the similar estimates given in the proof of Lemma~2 in \cite{FV12} pp. 259--261, we obtain 
\begin{align}\label{2.2.01}
\frac{d}{dt}\|\Delta_j\theta^\nu\|_{L^q}+C2^{2j}\|\Delta_j\theta^\nu\|_{L^q}\le &C\|\theta^\nu\|^{2-\frac{p}{q}}_{C^\alpha}2^{j(1-\alpha)}\sum_{k\le j} 2^{k(1-\frac{p}{q}-\alpha(1-\frac{p}{q}))}(2^k\|\Delta_k\theta^\nu\|_{L^p})^\frac{p}{q}\notag\\
&+C\|\theta^\nu\|^{2-\frac{p}{q}}_{C^\alpha}2^{j(2-\alpha-\frac{p}{q}-\alpha(1-\frac{p}{q}))}\sum_{|j-k|\le2}(2^k\|\Delta_k\theta^\nu\|_{L^p})^\frac{p}{q}\notag\\
&+C\|\theta^\nu\|^{2-\frac{p}{q}}_{C^\alpha}2^j\sum_{k\ge j-1}2^{k(1-\alpha-\frac{p}{q}-\alpha(1-\frac{p}{q}))}(2^k\|\Delta_k\theta^\nu\|_{L^p})^\frac{p}{q}\notag\\
&+C\|\Delta_j S\|_{L^q},
\end{align}
where $C=C(C_*)>0$ is a positive constant independent of $\nu$ and $C_*$ is defined in condition (1.13). Applying Gr\"{o}nwall's inequality on \eqref{2.2.01},
\begin{align}\label{2.2.02}
\|\Delta_j\theta^\nu(t)\|_{L^q}\le &e^{-c2^{2j(t-t_1)}}\|\Delta_j\theta^\nu(t_1)\|_{L^q}\notag\\
&+C\|\theta^\nu\|^{2-\frac{p}{q}}_{L^\infty(I;C^\alpha)}2^{j(1-\alpha)}\sum_{k\le j}2^{k(1-\frac{p}{q}-\alpha(1-\frac{p}{q}))}\Theta_{j,k}(t)\notag\\
&+C\|\theta^\nu\|^{2-\frac{p}{q}}_{L^\infty(I;C^\alpha)}2^{j(2-\alpha-\frac{p}{q}-\alpha(1-\frac{p}{q}))}\sum_{|k-j|\le2}\Theta_{j,k}(t)\notag\\
&+C\|\theta^\nu\|^{2-\frac{p}{q}}_{L^\infty(I;C^\alpha)}2^j\sum_{k\ge j-1}2^{k(1-\alpha-\frac{p}{q}-\alpha(1-\frac{p}{q}))}\Theta_{j,k}(t)\notag\\
&+\|\Delta_j S\|_{L^q}\int_{t_1}^te^{-c(t-\tau)2^{2j}}d\tau,
\end{align}
where 
$$\Theta_{j,k}(t)=\int_{t_0}^t e^{-c(t-\tau)2^{2j}}(2^k\|\Delta_k\theta^\nu(s)\|_{L^p}^\frac{p}{q})d\tau.$$
We take the $L^2(I)$ norm of \eqref{2.2.02} and apply the similar estimates given in \cite{FV12} pp. 260--261 to obtain
\begin{align}\label{2.2.03-}
\|\Delta_j\theta^\nu(t)\|_{L^2(I;L^q)}\le &C\|\theta^\nu(t_1)\|_{L^q}^\frac{p}{q}\|\theta^\nu(t_1)\|_{C^\alpha}^{1-\frac{p}{q}}(2^{-j\alpha(1-\frac{p}{q})}\min\{2^{-j},|I|^\frac{1}{2}\})\notag\\
&+C\|\theta^\nu\|_{L^\infty(I;C^\alpha)}\|\theta^\nu\|_{L^2(I; B^1_{p,2})}^\frac{p}{q}|I|^\frac{q-p}{2q}(2^{j(2-\alpha-\frac{p}{q}-\alpha(1-\frac{p}{q})})\min\{C2^{-2j},|I|\})\notag\\
&+C\|\Delta_j S\|_{L^q}|I|^\frac{1}{2}\min\{C2^{-j},|I|\}.
\end{align}
Multiply the above on both sides by $2^j$ and take an $\ell^r(\mathbb{Z})$-norm,
\begin{align}\label{2.2.03}
\|\theta^\nu\|_{\tilde{L}^2(I; B^1_{q,r})}\le &C\|\theta^\nu(t_1)\|_{L^q}^\frac{p}{q}\|\theta^\nu(t_1)\|_{C^\alpha}^{1-\frac{p}{q}}\|(2^{j(1-\alpha(1-\frac{p}{q}))}\min\{2^{-j},|I|^\frac{1}{2}\})\|_{l^r(\mathbb{Z})}\notag\\
&+C\|\theta^\nu\|_{L^\infty(I;C^\alpha)}\|\theta^\nu\|_{L^2(I; B^1_{p,2})}^\frac{p}{q}|I|^\frac{q-p}{2q}\|(2^{j(3-\alpha-\frac{p}{q}-\alpha(1-\frac{p}{q})})\min\{C2^{-2j},|I|\})\|_{l^r(\mathbb{Z})}\notag\\
&+C\|2^j\|\Delta_j S\|_{L^q}|I|^\frac{1}{2}\min\{C2^{-j},|I|\}\|_{\ell^r(\mathbb{Z})}.
\end{align} 
Since $q\in(p,m_\alpha p)$, the two $\ell^r$ norms on the right side of the above estimate are finite for any $1\le r\le\infty$. On the other hand, the last term on the right side of \eqref{2.2.03} can be bounded by $\|S\|_{L^\infty}|I|^\frac{1}{2}$. Hence we have $\theta^\nu\in\tilde{L}^2(I; B^1_{q,r})$ which finishes the proof of \eqref{2.2.00}.
\end{proof}
\noindent{\bf Step~(ii):} Assume that $\theta^\nu$ satisfies \eqref{2.2}, we then have
\begin{align}\label{2.2.04}
\nabla\theta^{\nu}\in L^2(I; L^{\infty}(\mathbb{T}^d)).
\end{align}
\begin{proof}
We follow the proof of Lemma~3 in \cite{FV12} pp. 262--263. First, we note that $ H^1= B^1_{2,2}$, so we may apply Step~(i) with $p=2$ and obtain that $\theta\in L^2(I; B^1_{q,2})$ for any $q\in(2,2m_\alpha)$. Since $m_\alpha> 1$, we may bootstrap and apply Step~(i) once more to obtain that $\theta\in L^2(I; B^1_{q,2})$ for all $q\in(2,2m^2_{\alpha})$. For any fixed $p>2$, we have $m_{\alpha}>1$ and hence $m_{\alpha}^k\rightarrow\infty$ as $k\rightarrow\infty$. By iterating Step~(i) finitely many times,  we obtain 
\begin{align}\label{2.2.05}
\mbox{$\theta^\nu\in\tilde{L}^2(I; B^1_{p,r})$ for all $r\in[1,\infty]$.}
\end{align}
Fix $p$ large enough (which will be explicitly chosen later), and let $q = \frac{p(1 + m_\alpha)}{2}$. From the estimate \eqref{2.2.03-}, for any $\varepsilon>0$,
\begin{align}\label{2.2.06}
2^{j(1+\varepsilon)}\|\Delta_j\theta^\nu(t)\|_{L^2(I;L^q)}\le &C\|\theta^\nu(t_1)\|_{L^q}^\frac{p}{q}\|\theta^\nu(t_1)\|_{C^\alpha}^{1-\frac{p}{q}}\min\{C2^{j(\varepsilon-\alpha(1-\frac{p}{q}))},|I|^\frac{1}{2}2^{j(1+\varepsilon-\alpha(1-\frac{p}{q}))}\}\notag\\
&+C\|\theta^\nu\|_{L^\infty(I;C^\alpha)}\|\theta^\nu\|_{L^2(I; B^1_{p,2})}^\frac{p}{q}|I|^\frac{q-p}{2q}\notag\\
&\qquad\times\min\{C2^{j(\varepsilon+1-\frac{p}{q}-\alpha(2-\frac{p}{q})},2^{j(\varepsilon+3-\frac{p}{q}-\alpha(2-\frac{p}{q})}|I|\})\notag\\
&+C\|\Delta_j S\|_{L^q}|I|^\frac{1}{2}\min\{C2^{j(\varepsilon-1)},2^{j(1+\varepsilon)}|I|\},
\end{align}
Choose
\begin{align*}
\varepsilon=\frac{1}{2}\min\left\{\frac{\alpha^2}{2-3\alpha},\frac{(1-2\alpha)(2-3\alpha-\alpha^2)}{(1-\alpha)(2-3\alpha)},1\right\},
\end{align*}
then $\varepsilon>0$ for all $\alpha\in(0,\frac{1}{2})$. By taking the $\ell^r$ norm of \eqref{2.2.06} and using Besov embedding theorem \eqref{2d}, we have
\begin{align}\label{2.2.07}
\theta^\nu\in\tilde{L}^2(I; B^{1+\varepsilon}_{q,1})\subset L^2(I; B^{1+\varepsilon}_{q,1})\subset L^2(I; B^{1+\varepsilon-\frac{2d}{p+pm_\alpha}}_{\infty,1}).
\end{align}
We pick $p>2$ so that $\varepsilon-\frac{2d}{p+pm_\alpha}=0$, then we obtain from \eqref{2.2.07} that
\begin{align*}
\nabla\theta^\nu\in L^2(I; B^0_{\infty,1}).
\end{align*}
Lastly, from condition \eqref{2.2}, we have $\nabla\theta^\nu\in L^2(I;L^2\cap B^0_{\infty,1})$, hence \eqref{2.2.04} follows from the the borderline Sobolev embedding theorem that $L^2\cap B^0_{\infty,1}\subset B^0_{\infty,1}\subset L^\infty$.
\end{proof}

\noindent Using the results obtained from Step~(i) and Step~(ii) as described above, we are ready to prove the bound \eqref{2.2} for $\alpha\in(0,\frac{1}{2}]$. First, by using \eqref{2.2.04}, we have
\begin{align}
\int_{t_1}^{t_2}\|\nabla\theta^{\nu}(t,\cdot)\|^2_{L^{\infty}}dt\le C(C_{*},t_1,t_2,d,\kappa,S,\|\theta_0\|_{L^2}),\label{2.3}
\end{align}
where $C(C_{*},t_1,t_2,d,\kappa,S,\|\theta_0\|_{L^2})>0$ is a positive constant independent of $\nu$. Next, by the condition (1.2), $u^{\nu}$ is divergence free and we have the a priori estimate derived from \eqref{0.1} that
\begin{align}
&\frac{1}{2}\frac{d}{dt}\|\nabla\theta^{\nu}(t,\cdot)\|_{L^2}+\kappa\|\Delta\theta^{\nu}(t,\cdot)\|_{L^2}\notag\\
&\le\left|\int\partial_{k}u^{\nu}_{j}\partial_{k}\theta^{\nu}\partial_{j}\theta^{\nu}\right|+\left|\int S\Delta\theta^{\nu}\right|\notag\\
&\le C_{*}\|\Delta\theta^{\nu}(t,\cdot)\|_{L^2}\|\nabla\theta^{\nu}(t,\cdot)\|_{L^2}\|\nabla\theta^{\nu}(t,\cdot)\|_{L^\infty}+\|\nabla\theta^{\nu}(t,\cdot)\|_{L^2}\|\nabla S(t,\cdot)\|_{L^2}\notag\\
&\le\frac{\kappa}{2}\|\Delta\theta^{\nu}(t,\cdot)\|^2_{L^2}+C(C_{*},\kappa,S)\|\nabla\theta^{\nu}(t,\cdot)\|^2_{L^2}\|\nabla\theta^{\nu}(t,\cdot)\|^2_{L^\infty}.\label{2.4}
\end{align}
By absorbing the term $\dis\frac{\kappa}{2}\|\Delta\theta^{\nu}(t,\cdot)\|^2_{L^2}$ on the left side of \eqref{2.4} and using the bound \eqref{2.3}, we obtain, for all $t\ge t_1$,
\begin{align}
\|\theta^{\nu}(t,\cdot)\|^2_{ H^1}\le \|\theta^{\nu}(t_2,\cdot)\|^2_{ H^1}e^{\int_{t_1}^{t_2}C(C_{*},\kappa,S)\|\nabla\theta^{\nu}(t,\cdot)\|_{L^{\infty}}^2dt}\le \|\theta^{\nu}(t_2,\cdot)\|^2_{ H^1}e^{C(C_{*},t_1,t_2,d,\kappa,S,\|\theta_0\|_{L^2})}.\label{2.5}
\end{align}
Since $\theta^\nu\in L^2([t_1,t_2]; H^1)$ and $L^2$ functions are finite a.e., $\|\theta^{\nu}(t_2,\cdot)\|_{ H^1}$ is finite for a.e. $t_2>0$, with bounds in terms of $\|\theta_0\|_{L^2}$ but independent of $\nu$. Hence \eqref{2.5} implies $\theta^{\nu}\in L^{\infty}([t_1,t_2];  H^1)\cap L^2([t_1,t_2];  H^2)$ with
\begin{align*}
\sup_{t\in[t_1,t_2]}\|\theta^{\nu}(t,\cdot)\|_{ H^1}+\int_{t_1}^{t_2}\|\theta^{\nu}(t,\cdot)\|^2_{  H^2(\R^d))}dt\le C(C_{*},t_1,t_2,d,\kappa,S,\|\theta_0\|_{L^2}).
\end{align*}
By further taking derivatives of the equation \eqref{0.1} and repeating the above argument, \eqref{2.1} also holds for all $s>1$ and we finish the proof of \eqref{2.1} for $\alpha\in(0,\frac{1}{2}]$.

\medskip

\noindent{\bf Case 2: $\alpha\in(\frac{1}{2},1)$.} We adopt the method given in \cite{FV12} pp. 257--258. Similar to Case~1, the goal is to prove that \eqref{2.2.04} holds for $\theta^\nu$. Once \eqref{2.2.04} is proved, same argument given in the previous case can then be applied which gives the bound \eqref{2.1}. First, note that if $\theta^\nu$ satisfies \eqref{2.2}, then $\theta^\nu\in L^\infty([t_0,\infty); B^{\alpha_p}_{p,\infty})$, where $\alpha_p=(1-\frac{2}{p})\alpha$ and $p\in[2,\infty)$ is fixed and to be chosen later. Then, for $j\in\mathbb{Z}$ fixed, we have
\begin{align}\label{2.5.01}
\frac{1}{p}\frac{d}{dt}\|\Delta_j\theta^\nu\|^p_{L^p}+C2^{2j}\|\Delta_j\theta^\nu\|^p_{L^p}\le\left|\int|\Delta_j\theta^\nu|^{p-2}\Delta_j\theta^\nu\Delta_j(u\cdot\nabla\theta^\nu)\right|+\left|\int\Delta_j(S)\Delta_j\theta^\nu|\Delta_j\theta^\nu|^{p-2}\right|.
\end{align}
Using the Bony paraproduct decomposition and the method given in \cite{FV12}, the first term on the right side of \eqref{2.5.01} can be bounded by $C2^{(2-2\alpha_p)j}\|\theta^\nu\|_{C^{\alpha_p}}\|\theta^\nu\|_{ B^{\alpha_p}_{p,\infty}}$, where $C>0$ is a constant which may depend on $C_*$ as in condition (1.13) but independent of $\nu$. On the other hand, the term $\dis\left|\int\Delta_j(S)\Delta_j\theta^\nu|\Delta_j\theta^\nu|^{p-2}\right|$ can be bounded by $\|\Delta_j\theta^\nu\|^{p-1}_{L^p}\|\Delta_j S\|_{L^p}$. By applying the bounds on \eqref{2.5.01}, using Gr\"{o}wall inequality and the Besov embedding theorem \eqref{2d}, we obtain
\begin{align*}
\theta^\nu\in L^\infty([t,\infty); B^{2\alpha_p}_{p,\infty})\subset L^\infty([t,\infty); B^{2\alpha-\frac{4\alpha+d}{p}}_{\infty,\infty})
\end{align*}
for all $t\ge t_1$. Choose $p>\frac{4+d}{2\alpha-1}$, then $2\alpha-\frac{4\alpha+d}{p}>1$. Since $L^\infty\cap B^{2\alpha-\frac{4\alpha+d}{p}}_{\infty,\infty}=C^{2\alpha-\frac{4\alpha+d}{p}}$, we conclude that \eqref{2.2.04} holds for $\theta^\nu$ and we finish the proof of \eqref{2.1} for $\alpha\in(\frac{1}{2},1)$.
\end{proof}

\subsection{Proof of Theorem~1.2}

The proof can be divided into two cases:

\medskip

\noindent{\bf Case 1: $s=0$.} Let $\phi=\theta^{\nu}-\theta$, then $\phi$ satisfies the following equation:
\begin{align}
\partial_t\phi+u^{\nu}\cdot\nabla\phi+(u^{\nu}-u)\cdot\nabla\theta=\kappa\Delta\phi.\label{5.1}
\end{align}
Multiply \eqref{5.1} by $\phi$ and integrate,
\begin{align}
\frac{1}{2}\frac{d}{dt}\|\phi(t,\cdot)\|_{L^2}^2+\frac{\kappa}{2}\|\nabla\phi(t,\cdot)\|_{L^2}^2=-\int(u^{\nu}-u)\cdot\nabla\theta\cdot\phi(t,x)dx.\label{5.2}
\end{align}
We estimate the right side of \eqref{5.2} as follows. For each $t>0$,
\begin{align}
\left|-\int(u^{\nu}-u)\cdot\nabla\theta\cdot\phi(t,x)dx\right|&\le\|(u^{\nu}-u)(t,\cdot)\|_{L^2}\|\phi(t,\cdot)\|_{L^2}\|\nabla\theta(t,\cdot)\|_{L^\infty}\notag\\
&\le\frac{\kappa}{4C_{*}^2}\|(u^{\nu}-u)(t,\cdot)\|^2_{L^2}+\frac{4C_{*}^2}{\kappa}\|\phi(t,\cdot)\|_{L^2}^2\|\nabla\theta(t,\cdot)\|_{L^\infty}^2,\label{5.3}
\end{align}
where $C_{*}>0$ is the constant as stated in condition (1.13). We focus on the term $\|(u^{\nu}-u)(t,\cdot)\|^2_{L^2}$ as in \eqref{5.3}, and for simplicity we sometime drop the variable $t$. Using Plancherel Theorem, for each $j$, 
\begin{align}
\|(u_j^{\nu}-u_j)(t,\cdot)\|^2_{L^2}&=\sum_{k\in\mathbb{Z}^d}|\widehat{(u_j^{\nu}-u_j)}(k)|^2\notag\\
&=\sum_{k\in\mathbb{Z}^d}|(\widehat{\partial_{i} T_{ij}^{\nu}}\widehat{\theta^{\nu}}-\widehat{\partial_{i} T^0_{ij}}\widehat{\theta})(k)|^2\notag\\
&\le\sum_{k\in\mathbb{Z}^d}|\widehat{\partial_{i} T_{ij}^{\nu}}|^2|\widehat\phi|^2(k)+\sum_{k\in\mathbb{Z}^3}|\widehat{\partial_{i} T_{ij}^{\nu}}-\widehat{\partial_{i} T^0_{ij}}|^2|\widehat\theta|^2(k)\notag\\
&\le\sum_{k\in\mathbb{Z}^d}|\widehat{T_{ij}^{\nu}}(k)|^2|\widehat{\nabla\phi}(k)|^2+\sum_{k\in\mathbb{Z}^3}|\widehat{T^{\nu}_{ij}}(k)-\widehat{T^0_{ij}}(k)|^2|\widehat{\nabla\theta}(k)|^2\notag\\
&:=I_1+I_2.\label{5.4}
\end{align}
Using the condition (1.13), the term $I_1$ can be estimated by
\begin{align}
I_1\le C_{*}^2\sum_{k\in\mathbb{Z}^d}|\widehat{\nabla\phi}(k)|^2=C_{*}^2\|\nabla\phi(t,\cdot)\|_{L^2}^2.\label{5.5}
\end{align}
For the term $I_2$, by Theorem~\ref{existence of smooth solutions sub Thm}, $\theta$ is smooth and $\|\nabla\theta(t,\cdot)\|_{L^2}<\infty$. So the condition (1.4) can be applied and we have
\begin{align}
\lim_{\nu\rightarrow0}I_2=\lim_{\nu\rightarrow0}\sum_{k\in\mathbb{Z}^d:k\neq0}|\widehat{T^{\nu}_{ij}}(k)-\widehat{T^0_{ij}}(k)|^2|\widehat{\nabla\theta}(k)|^2=0.\label{5.6}
\end{align}
We apply \eqref{5.5} on \eqref{5.4} to obtain
\begin{align}
\|(u^{\nu}-u)(t,\cdot)\|^2_{L^2}\le C_{*}^2\|\nabla\phi(t,\cdot)\|_{L^2}^2+I_2,\label{5.7}
\end{align}
and hence using \eqref{5.7} on \eqref{5.3},
\begin{align}
\left|-\int(u^{\nu}-u)\cdot\nabla\theta\cdot\phi(t,x)dx\right|\le\frac{\kappa}{4}\|\nabla\phi(t,\cdot)\|_{L^2}^2+\frac{\kappa}{4C_{*}^2}I_2+\frac{4C_{*}^2}{\kappa}\|\phi(t,\cdot)\|_{L^2}^2\|\nabla\theta(t,\cdot)\|_{L^\infty}^2.\label{5.8}
\end{align}
Applying \eqref{5.8} on \eqref{5.2}, using Gr\"{o}nwall's inequality, taking $\nu\rightarrow0$ and using \eqref{5.6}, for all $t>0$, we conclude that
\begin{align}
\lim_{\nu\rightarrow0}\|(\theta^{\nu}-\theta)(t,\cdot)\|_{L^2}^2=\lim_{\nu\rightarrow0}\|\phi(t,\cdot)\|_{L^2}^2=0.\label{5.9}
\end{align}

\noindent{\bf Case 2: $s>0$.} We apply the Gagliardo-Nirenberg interpolation inequality for homogeneous Sobolev space \eqref{2c} to obtain, for $s>0$ and $t>0$,
\begin{align}
\|(\theta^{\nu}-\theta)(t,\cdot)\|_{ H^s}\le C\|(\theta^{\nu}-\theta)(t,\cdot)\|_{L^2}^{\gamma}\|(\theta^{\nu}-\theta)(t,\cdot)\|_{ H^{s+1}}^{1-\gamma},
\end{align}
where $\gamma\in(0,1)$ depends on $s$, and $C>0$ is a positive constant which depends on $d$ but is independent of $\nu$. Using the bounds \eqref{2.1} as in Theorem~\ref{uniform bound on solution Thm}, the term $\|(\theta^{\nu}-\theta)(t,\cdot)\|_{ H^{s+1}}^{1-\gamma}$ is bounded uniformly in $\nu$ for all $t\ge\tau$. Hence by taking $\nu\rightarrow0$ and applying the $L^2$ convergence \eqref{5.9}, we have 
\begin{align*}
\mbox{$\dis\lim_{\nu\rightarrow0} \|(\theta^{\nu}-\theta)(t,\cdot)\|_{ H^s}=0$ for $t\ge\tau$,}
\end{align*}
which finishes the proof of Theorem~\ref{Convergence of solutions as nu goes to 0}.

\section{The MG Equations}\label{The MG Equations}

\subsection{The explicit symbol of the MG operator}

We now return to the magnetogeostrophic active scalar equation discussed in the introduction. Specifically, we are interested in the following active scalar equation in the domain $(0,\infty)\times\mathbb{T}^3=(0,\infty)\times[0,2\pi]^3$ (with periodic boundary conditions):
\begin{align}
\label{1.1} \left\{ \begin{array}{l}
\partial_t\theta^{\nu}+u^{\nu}\cdot\nabla\theta^{\nu}=\kappa\Delta\theta^{\nu}+S, \\
u=M^{\nu}[\theta^{\nu}],\theta(0,x)=\theta_0(x)
\end{array}\right.
\end{align}
via a Fourier multiplier operator $M^{\nu}$ which relates $u^{\nu}$ and $\theta^{\nu}$. More precisely,
\begin{align*}
u^{\nu}_j=M^{\nu}_j [\theta^{\nu}]=(\widehat{M^{\nu}_j}\hat\theta^{\nu})^\vee
\end{align*}
for $j\in\{1,2,3\}$. The explicit expression for the components of $\widehat M^{\nu}$ as functions of the Fourier variable $k=(k_1,k_2,k_3)\in\Z^3$ are obtained from the constitutive law \eqref{1.1-8} to give
\begin{align}
\widehat M^{\nu}_1(k)&=[k_2k_3|k|^2-k_1k_3(k_2^2+\nu|k|^4)]D(k)^{-1},\label{1.2}\\
\widehat M^{\nu}_2(k)&=[-k_1k_3|k|^2-k_2k_3(k_2^2+\nu|k|^4)]D(k)^{-1},\label{1.3}\\
\widehat M^{\nu}_3(k)&=[(k_1^2+k_2^2)(k_2^2+\nu|k|^4)]D(k)^{-1},\label{1.4}
\end{align}
where
\begin{align}\label{1.5}
D(k)=|k|^2k_3^2+(k_2^2+\nu|k|^4)^2.
\end{align}
Here $\kappa>0$ and $\nu\ge0$ are some diffusive constants, $\theta_0$ is the initial condition, and $S=S(x)$ is a given smooth function that represents the forcing of the system. Furthermore, we restrict the system \eqref{1.1} to the function spaces where all functions (including the forcing $S$ and initial data $\theta_0$) have zero mean with respect to $x_3$ (refer to section 4 of \cite{FV11a} for further discussion of this restriction). 

Details of the singular behaviour of the Fourier multiplier symbols for the operator $M^0$ in certain regions of Fourier space are given in \cite{FV11a}. More general issues concerning the ill-posedness and well-posedness of the unforced MG$^0$ equation can be found in \cite{FRV12}, \cite{FV11a}, \cite{FV11b}. In particular, it is to be noted that the MG$^0$ with $\kappa>0$ is the so-called critical MG equation in the sense of the delicate balance between the nonlinear term and the dissipative term. Various critical active scalar equations such as surface quasi-geostrophic equation (SQG) have received considerable attention in the past decade because of the challenging nature of this delicate balance, \cite{CV10}, \cite{CTV14b}, \cite{CV12}, \cite{CW09}, \cite{D10}, \cite{FPV09}, \cite{KN09}, \cite{KNV07}. On the other hand, as we discussed in \cite{FS15}, the MG$^\nu$ equation with $\nu>0$, where the symbol $\widehat{M}^\nu$ decays like $k^{-2}$, is a case where the dissipative term dominates the nonlinear term. In \cite{FS15} it is shown that in the case of the MG$^\nu$ equation with $\nu>0$ and $\kappa>0$, even for singular initial data, the global solution is instantaneously $C^\infty$-smoothed and satisfied classically for all $t>0$. With this dichotomy in mind, we seek to determine the long time behaviour of the forced critical MG$^0$ equation through the ``vanishing viscosity'' limit of the MG$^\nu$ equation. 

\subsection{The MG equations in the class of drift-diffusion equations}

We will now show that the MG$^\nu$, $\nu\ge0$, equations satisfy the conditions of the general class of drift diffusion equations given by (1.11)-(1.14). We write 
\begin{align}
u^{\nu}_j =M^{\nu}_j[\theta^{\nu}]=\partial_{i} T_{ij}^{\nu},\label{1.6}
\end{align}
where we have denoted 
\begin{align}
\mbox{$T_{ij}^{\nu}=-\partial_{i}(-\Delta)^{-1}M^{\nu}_j$ for $\nu\ge0$.}\label{1.7}
\end{align}

\noindent In order to show that conditions (1.11)-(1.14) are satisfied, we need the following lemmas for $\{T^{\nu}_{ij}\}_{\nu\ge0}$:

\begin{lemma}\label{bound on operator}
Let $T_{ij}^{\nu}, T^0_{ij}$ be as defined in \eqref{1.6}-\eqref{1.7} in terms of $M^{\nu}$ and $M^0$. There are constants $C_1,C_2>0$ independent of $\nu$ such that, for all $1\le i,j\le 3$, 
\begin{align}\label{4.1}
\sup_{\nu\in(0,1]}\sup_{\{k\in\mathbb{Z}^3:k\neq0\}}|\widehat T^{\nu}_{ij}(k)|\le\sup_{\nu\in(0,1]}\sup_{\{k\in\mathbb{Z}^3:k\neq0\}}\frac{|\widehat M^{\nu}(k)|}{|k|}\le C_1,
\end{align}
\begin{align}\label{4.2}
\sup_{\{k\in\mathbb{Z}^3:k\neq0\}}|\widehat T^0_{ij}(k)|\le\sup_{\{k\in\mathbb{Z}^3:k\neq0\}}\frac{|\widehat M^0(k)|}{|k|}\le C_2.
\end{align}
\end{lemma}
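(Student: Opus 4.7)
The plan is to reduce the bound on $\widehat T^\nu_{ij}$ to a bound on the normalized symbol $|\widehat M^\nu(k)|/|k|$, and then to extract a $\nu$-uniform pointwise bound on the latter using AM--GM on the denominator $D(k)$. From the definition $T^\nu_{ij}=-\partial_i(-\Delta)^{-1}M^\nu_j$, the Fourier symbol satisfies $\widehat T^\nu_{ij}(k)=-ik_i|k|^{-2}\widehat M^\nu_j(k)$, so that $|\widehat T^\nu_{ij}(k)|\le |\widehat M^\nu_j(k)|/|k|\le |\widehat M^\nu(k)|/|k|$. This immediately supplies the first inequality in both \eqref{4.1} and \eqref{4.2}; the real content is the second.

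For the second inequality I would set $A:=|k||k_3|$ and $B:=k_2^2+\nu|k|^4$, so that $D(k)=A^2+B^2$. The basic estimates are the AM--GM bound $D(k)\ge 2AB$ together with the coordinate bounds $D(k)\ge A^2$ and $D(k)\ge B^2$. Applied directly to \eqref{1.4},
$$\frac{|\widehat M^\nu_3(k)|}{|k|}=\frac{(k_1^2+k_2^2)\,B}{|k|\,D(k)}\le\frac{k_1^2+k_2^2}{2|k|^2|k_3|}\le\frac{1}{2|k_3|}.$$
For $\widehat M^\nu_1$ I would split the numerator of \eqref{1.2} into its two pieces and apply AM--GM to each to obtain
$$\frac{|\widehat M^\nu_1(k)|}{|k|}\le\frac{|k_2|}{2B}+\frac{|k_1|}{2|k|^2}.$$
The second term is bounded by $1/(2|k|)\le 1/2$. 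For the first term I would exploit that $k_2\in\mathbb{Z}$: either $k_2=0$, in which case the piece vanishes identically by the $k_2$ prefactor in \eqref{1.2}, or $|k_2|\ge 1$ and then $B\ge k_2^2$ gives $|k_2|/(2B)\le 1/2$. An analogous splitting for $\widehat M^\nu_2$, using $D(k)\ge A^2$ in place of $D(k)\ge 2AB$ for the first summand so as to avoid dividing by a possibly small $B$, delivers the parallel bound. Taking $C_1$ as a fixed multiple of these three pointwise bounds gives \eqref{4.1} uniformly in $\nu\in(0,1]$, and specialising to $\nu=0$ produces $C_2$ for \eqref{4.2}.

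The main obstacle is organising the case analysis around the degenerate regions of Fourier space: the locus $k_3=0$ where $A$ vanishes, and, in the $\nu=0$ case, the locus $k_2=0$ where $B$ can also vanish. For $\widehat M^\nu_1$ and $\widehat M^\nu_2$ the numerators carry an overall $k_3$ factor, so these symbols vanish identically when $k_3=0$; however, $\widehat M^\nu_3$ does genuinely blow up when $k_3=0$ in the limit $\nu\to 0$. This is reconciled by the zero-mean-in-$x_3$ constraint imposed on the underlying function spaces, which restricts the relevant Fourier modes to $|k_3|\ge 1$, so the bound $1/(2|k_3|)\le 1/2$ is effective. Tracking these vanishings and selecting the appropriate lower bound on $D(k)$ for each of the three components is the technical heart of the argument.
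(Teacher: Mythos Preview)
Your argument is correct and, in fact, slightly slicker than the paper's. The paper proves \eqref{4.1} for $\widehat M^\nu_1$ by splitting into the two regimes $|k|>\nu^{-1/2}$ and $|k|\le\nu^{-1/2}$: in the first regime the piece $\nu|k|^4$ dominates $B$ and they use $D(k)\ge\nu^2|k|^8$ on the $\nu$-term, while in the second regime $\nu|k|^4\le|k|^2$ and they use $D(k)\ge|k|^2k_3^2$ throughout. This yields the constant $3$ in each case. Your AM--GM route $D(k)=A^2+B^2\ge 2AB$ (with $A=|k||k_3|$, $B=k_2^2+\nu|k|^4$) bypasses the $\nu$-dependent case split entirely and lands on constants of size at most $1$; the integrality of $k_2$ then handles the potentially small $B$ exactly as you describe. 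The paper's approach has the minor advantage of being completely mechanical once the threshold $\nu^{-1/2}$ is identified, whereas yours requires choosing, term by term, which of $D\ge 2AB$, $D\ge A^2$, $D\ge B^2$ to invoke; on the other hand, your approach makes the uniformity in $\nu$ transparent from the outset and gives sharper constants. Both rely on the zero-mean-in-$x_3$ restriction to ensure $|k_3|\ge1$, which the paper takes as a standing hypothesis.
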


\begin{proof}
The bound \eqref{4.2} follows from the discussion in (\cite{FV11a}, Section 4) and we omit the proof.

To show the bound \eqref{4.1}, we only give the details for $\widehat M^{\nu}_1$ since the cases for $\widehat M^{\nu}_2$ and $\widehat M^{\nu}_3$ are almost identical. 

To prove \eqref{4.1}, we fix $\nu\in(0,1]$ and consider the following cases:

\noindent{\bf Case 1: $|k|>\nu^{-\frac{1}{2}}$.} Then for each $k\in\mathbb{Z}^3/\{k=0\}$, 
\begin{align*}
\frac{|\widehat M_1^{\nu}(k)|}{|k|}=\frac{|k_2k_3|k|^2-k_1k_3(k_2^2+\nu|k|^4)|}{|k|(|k|^2k_3^2+(k_2^2+\nu|k|^4)^2)}.
\end{align*}
Since $k\neq0$, so $|k|\ge|k_j|\ge1$ for $j=1,2,3$, in particular $|k|^{-1}<\nu^\frac{1}{2}$. Hence we obtain
\begin{align*}
\frac{|\widehat M_1^{\nu}(k)|}{|k|}&\le\frac{|k_2k_3||k|^2}{|k|^3k_3^2}+\frac{|k_1k_3|k_2^2}{|k|^3k_3^2}+\frac{\nu|k_1k_3||k|^4}{\nu^2|k|^8}\\
&\le\frac{1}{|k_3|}+\frac{1}{|k_3|}+\frac{1}{\nu|k|^2}\\
&\le 2+\frac{\nu}{\nu}=3.
\end{align*}

\noindent{\bf Case 2: $|k|\le\nu^{-\frac{1}{2}}$.} Then for each $k\in\mathbb{Z}^3/\{k=0\}$,
\begin{align*}
\frac{|\widehat M_1^{\nu}(k)|}{|k|}&\le\frac{|k_2k_3||k|^2}{|k|^3k_3^2}+\frac{|k_1k_3|k_2^2}{|k|^3k_3^2}+\frac{\nu|k_1k_3||k|^4}{|k|^3k_3^2}\\
&\le\frac{1}{|k_3|}+\frac{1}{|k_3|}+\frac{\nu|k|^2}{|k_3|}\\
&\le2+\nu\cdot(\nu^{-\frac{1}{2}})^2=3.
\end{align*}
Combining two cases, we have
\begin{align*}
\sup_{\nu\in(0,1]}\sup_{\{k\in\mathbb{Z}^3:k\neq0\}}\frac{|\widehat M_1^{\nu}(k)|}{|k|}\le3,
\end{align*}
and hence \eqref{4.1} holds for some $C_1>0$ independent of $\nu$.
\end{proof}
\begin{lemma}\label{convergence of operator}
For each $L>0$, 
\begin{align}\label{4.3}
\lim_{\nu\rightarrow0}\sup_{\{k\in\mathbb{Z}^3:k\neq0,|k|\le L\}}\frac{|\widehat M^{\nu}(k)-\widehat M^0(k)|}{|k|}=0
\end{align}
\end{lemma}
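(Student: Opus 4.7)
The plan is to exploit the fact that the set $\Lambda_L := \{k\in\mathbb{Z}^3 : k\neq 0,\, |k|\le L\}$ is \emph{finite}, so that the claimed uniform statement reduces to pointwise convergence $\widehat{M}^\nu(k)\to\widehat M^0(k)$ for each fixed $k\in\Lambda_L$. Once pointwise convergence is in hand, the finite maximum of finitely many null sequences is itself null.

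First I would establish pointwise convergence. For each fixed $k\in\Lambda_L$, both the numerators in \eqref{1.2}--\eqref{1.4} and the denominator $D(k)=|k|^2k_3^2+(k_2^2+\nu|k|^4)^2$ are polynomials in the single real parameter $\nu$. Hence it suffices to check that $D(k)$ is bounded away from zero uniformly in $\nu\ge0$, so that $\nu\mapsto\widehat M^\nu_j(k)$ is continuous on $[0,\infty)$ and in particular $\widehat M^\nu_j(k)\to\widehat M^0_j(k)$ as $\nu\to 0^+$. I would split into cases on $k$: (i) if $k_3\neq 0$, then $D(k)\ge|k|^2k_3^2\ge 1$ uniformly in $\nu\ge0$; (ii) if $k_3=0$ and $k_2\neq 0$, then $D(k)=(k_2^2+\nu|k|^4)^2\to k_2^4>0$, so continuity again applies.

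The only subtle case is (iii): $k=(k_1,0,0)$ with $k_1\neq 0$, where $D^0(k)=0$ and indeed $\widehat M^\nu_3(k)=1/(\nu k_1^2)$ blows up. However, as stipulated in Section~5.1, the function space is restricted to elements with zero mean in $x_3$, which removes precisely the Fourier modes with $k_3=0$. Under the natural convention that $\widehat M^\nu(k)$ is set to $0$ (or, equivalently, regarded as undefined and excluded) on those removed modes, the contribution from (iii)—and indeed from all $k_3=0$ modes—vanishes identically, and this is where I expect the mildest but only real subtlety of the argument to lie.

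Finally, since $|k|\ge 1$ on $\Lambda_L$,
\[
\sup_{k\in\Lambda_L}\frac{|\widehat M^\nu(k)-\widehat M^0(k)|}{|k|}\le\max_{k\in\Lambda_L}|\widehat M^\nu(k)-\widehat M^0(k)|,
\]
and the right-hand side is a maximum over the finitely many $k\in\Lambda_L$ of quantities each of which tends to $0$ by the previous paragraph. Hence the supremum itself tends to $0$ as $\nu\to 0$, which is \eqref{4.3}. No hard analysis is required; the proof is essentially continuity of a rational function in $\nu$ combined with the finiteness of lattice points in a ball, with case (iii) above being the only place that demands care.
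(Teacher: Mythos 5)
Your proof is correct, but it takes a genuinely different route from the paper. The paper proceeds by brute-force computation: it combines $\widehat M_1^{\nu}(k)-\widehat M_1^{0}(k)$ into a single rational expression whose numerator carries explicit factors of $\nu$ and $\nu^2$, bounds the denominator below by $|k|^{5}k_3^{4}$, and concludes with a \emph{quantitative} uniform estimate of the form $C(\nu L^{10}+\nu^2 L^{12})$ on the ball $|k|\le L$. You instead use the soft observation that $\Lambda_L$ is finite, so uniform convergence reduces to pointwise continuity of the rational function $\nu\mapsto\widehat M^{\nu}_j(k)$ at $\nu=0$, which holds whenever $D(k)$ stays bounded away from zero. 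Both arguments are valid, but note two trade-offs. First, the paper's explicit rate is not wasted effort: later, in the proof of Lemma~\ref{continuity in nu lemma}, the authors need a bound of the form $\sum_k|\widehat{T^{\nu_1}_{ij}}(k)-\widehat{T^{\nu_2}_{ij}}(k)|^2|\widehat{\nabla\theta^{\nu_2}}(k)|^2\le C|\nu_1-\nu_2|$, i.e.\ Lipschitz-type continuity in $\nu$, which falls out of the explicit algebraic computation but not of your qualitative compactness argument; if one wanted to build the whole paper on your version, that later step would need a separate quantitative input. Second, your handling of the $k_3=0$ modes is actually \emph{more} careful than the paper's: the paper silently divides by $k_3^4$, which presumes $|k_3|\ge1$, whereas you correctly observe that at $k=(k_1,0,0)$ one has $\widehat M^{\nu}_3(k)=1/(\nu k_1^2)\to\infty$, so the statement as literally written fails on those modes and is rescued only by the Section~5.1 restriction to functions with zero mean in $x_3$ (equivalently, the convention that the symbol is irrelevant, or set to zero, where $\hat\theta$ vanishes identically). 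Making that convention explicit, as you do, is a genuine improvement in rigor over the paper's implicit treatment.
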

\begin{proof}
Again we only give the details for $\widehat M^{\nu}_1$. We fix $L>0$, then for each $k\in\mathbb{Z}^3\backslash(\{k=0\}$ with $|k|\le L$, we have
\begin{align*}
&\frac{|\widehat M_1^{\nu}(k)-\widehat M^0_1(k)|}{|k|}\\
&=\frac{|-\nu k_1k_3^3|k|^6+\nu k_1 k_2^4k_3|k|^4-\nu^2 k_2k_3|k|^{10}+\nu^2 k_1k_2^2k_3|k|^8-2\nu k_2^3 k_3|k|^6|}{(|k|^2k_3^2+\nu^2|k|^8+2\nu|k|^4k_2^2+k_2^4)(k_3^2|k|^2+k_2^4)|k|}.\\
&\le\frac{\nu|k_1||k_3|^3|k|^6}{|k|^{5}k_3^4}+\frac{\nu|k_1|k_2^4|k_3||k|^4}{|k|^{5}k_3^4}+\frac{\nu^2|k_2||k_3||k|^{10}}{|k|^{5}k_3^4}+\frac{\nu^2|k_1|k_2^2|k_3|k|^8}{|k|^{5}k_3^4}+\frac{2\nu|k_2|^3|k_3||k|^6}{|k|^{5}k_3^4}\\
&\le\nu L^{10}+\nu L^{10}+\nu^2 L^{12}+\nu^2 L^{12}+2\nu L^{10}.
\end{align*}
Hence 
\begin{align*}
\lim_{\nu\rightarrow0}\sup_{\{k\in\mathbb{Z}^3:k\neq0,|k|\le L\}}\frac{|\widehat M_1^{\nu}(k)-\widehat M^0_1(k)|}{|k|}=0.
\end{align*}
\end{proof}

\begin{lemma}\label{convergence of operator 2}
Let $g$ be a function such that $\|\nabla g\|_{L^2}<\infty$. Then we have 
\begin{align}\label{4.4}
\lim_{\nu\rightarrow0}\sum_{k\in\mathbb{Z}^3:k\neq0}\frac{|\widehat M^{\nu}(k)-\widehat M^0(k)|^2|\widehat{\nabla g}(k)|^2}{|k|^2}=0
\end{align}
\end{lemma}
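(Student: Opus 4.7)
The strategy is the standard low-frequency/high-frequency decomposition, using Lemma~5.2 to control the tail uniformly in $\nu$ and Lemma~5.3 to handle the bounded part.

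First, I would fix $\varepsilon>0$. By Plancherel, the hypothesis $\|\nabla g\|_{L^2}<\infty$ gives $\sum_{k\neq 0}|\widehat{\nabla g}(k)|^2 <\infty$, so there exists $L=L(\varepsilon,g)>0$ such that
\begin{equation*}
\sum_{\{k\in\mathbb{Z}^3:\,|k|>L\}}|\widehat{\nabla g}(k)|^2 < \varepsilon.
\end{equation*}
Then I would split the sum in \eqref{4.4} as $\Sigma_{\mathrm{low}} + \Sigma_{\mathrm{high}}$, where $\Sigma_{\mathrm{low}}$ runs over $0<|k|\le L$ and $\Sigma_{\mathrm{high}}$ over $|k|>L$.

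For the high-frequency part, I would apply Lemma~5.2 (bounds \eqref{4.1} and \eqref{4.2}): the triangle inequality gives $|\widehat M^\nu(k)-\widehat M^0(k)|/|k| \le C_1+C_2$ uniformly in $\nu\in(0,1]$ and in $k\neq 0$. Hence
\begin{equation*}
\Sigma_{\mathrm{high}} \le (C_1+C_2)^2 \sum_{\{|k|>L\}}|\widehat{\nabla g}(k)|^2 < (C_1+C_2)^2\,\varepsilon,
\end{equation*}
which is small independently of $\nu$.

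For the low-frequency part, Lemma~5.3 gives
\begin{equation*}
\eta(\nu,L):=\sup_{\{0<|k|\le L\}}\frac{|\widehat M^\nu(k)-\widehat M^0(k)|}{|k|}\longrightarrow 0\quad\text{as }\nu\to0,
\end{equation*}
so $\Sigma_{\mathrm{low}} \le \eta(\nu,L)^2\,\|\nabla g\|_{L^2}^2$, which tends to zero as $\nu\to 0$ with $L$ fixed. Letting first $\nu\to 0$ and then $\varepsilon\to 0$ yields \eqref{4.4}.

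I do not anticipate any real obstacle: the uniform bound of Lemma~5.2 is exactly what is needed to truncate, and the pointwise (in fact uniform on compact sets of Fourier space) convergence of Lemma~5.3 handles the remaining finite sum. The only subtle point is ensuring that the bound $C_1+C_2$ used to control $\Sigma_{\mathrm{high}}$ is independent of $\nu$, which is precisely the content of \eqref{4.1}.
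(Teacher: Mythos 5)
Your proposal is correct and follows essentially the same argument as the paper: fix $\varepsilon>0$, choose $L$ so the Plancherel tail of $\widehat{\nabla g}$ beyond $|k|>L$ is small, bound the high-frequency sum uniformly in $\nu$ via the constants $C_1,C_2$ of Lemma~5.2, and send $\nu\to0$ in the finite low-frequency part using the uniform-on-compact-sets convergence of Lemma~5.3. The only (immaterial) difference is bookkeeping of the constant on the tail: you use the triangle inequality to get $(C_1+C_2)^2\varepsilon$, while the paper expands $|\widehat M^\nu-\widehat M^0|^2\le |\widehat M^\nu|^2+|\widehat M^0|^2$ and writes $(C_1+C_2)\varepsilon$.
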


\begin{proof}
Fix $g$ with $\|\nabla g\|_{L^2}<\infty$ and let $\varepsilon>0$ be given. Then $\dis\sum_{k\in\mathbb{Z}^3}|\widehat{\nabla g}(k)|^2<\infty$, so there exists $L=L(\varepsilon)>0$ such that $\dis\sum_{k\in\mathbb{Z}^3, |k|>L}|\widehat{\nabla g}(k)|^2<\varepsilon$. Hence
\begin{align}
&\sum_{k\in\mathbb{Z}^3:k\neq0}\frac{|\widehat M^{\nu}(k)-\widehat M^0(k)|^2|\widehat{\nabla g}(k)|^2}{|k|^2}\notag\\
&=\sum_{k\in\mathbb{Z}^3:k\neq0,|k|\le L}\frac{|\widehat M^{\nu}(k)-\widehat M^0(k)|^2|\widehat{\nabla g}(k)|^2}{|k|^2}+\sum_{k\in\mathbb{Z}^3:k\neq0,|k|>L}\frac{(|\widehat M^{\nu}(k)|^2+|\widehat M^0(k)|^2)|\widehat{\nabla g}(k)|^2}{|k|^2}\notag\\
&\le\left(\sup_{\{k\in\mathbb{Z}^3:k\neq0,|k|\le L\}}\frac{|\widehat M^{\nu}(k)-\widehat M^0(k)|}{|k|}\right)^2\|\nabla g\|_{L^2}^2+(C_1+C_2)\varepsilon.\label{4.5}
\end{align}
Using Lemma~\ref{convergence of operator} and taking $\nu\rightarrow0$ on \eqref{4.5},
\begin{align*}
\lim_{\nu\rightarrow0}\sum_{k\in\mathbb{Z}^3:k\neq0}\frac{|\widehat M^{\nu}(k)-\widehat M^0(k)|^2|\widehat{\nabla g}(k)|^2}{|k|^2}\le (C_1+C_2)\varepsilon.
\end{align*}
Since $\varepsilon>0$ is arbitrary, \eqref{4.4} follows.
\end{proof}

In view of Lemma~\ref{bound on operator}--\ref{convergence of operator 2}, the sequence of operators $\{T^{\nu}_{ij}\}_{\nu\ge0}$ given by \eqref{1.6}-\eqref{1.7} satisfy the conditions (1.11) and (1.13)-(1.14). Moreover, following the discussion given in \cite{FV11a} pp. 298--299, $\{T^{\nu}_{ij}\}_{\nu\ge0}$ also satisfy (1.12). By Theorem~\ref{existence of smooth solutions sub Thm} and Theorem~\ref{uniform bound on solution Thm}, there exists classical solutions $\theta^{\nu}(t,x)\in C^{\infty}((0,\infty)\times \mathbb{T}^3)$ of \eqref{1.1}-\eqref{1.5}, evolving from $\theta_0$ which satisfy the uniform bounds \eqref{2.1}. The abstract Theorem~\ref{Existence of smooth solutions Thm} and Theorem~\ref{Convergence of solutions as nu goes to 0} may therefore be applied to the MG equations in order to obtain the convergence of smooth solutions, and hence we have proven:

\begin{theorem}\label{existence MG}
Let $\theta_0\in L^2$, $S\in C^{\infty}$ and $\kappa>0$ be given. There exists a classical solution $\theta^{\nu}(t,x)\in C^{\infty}((0,\infty)\times \mathbb{T}^3)$ of \eqref{1.1}-\eqref{1.5}, evolving from $\theta_0$ for all $\nu\ge0$.
\end{theorem}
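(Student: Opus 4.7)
The plan is to verify that the Fourier multiplier operators $M^\nu$ from \eqref{1.2}-\eqref{1.5} fit into the abstract drift-diffusion framework of Section~\ref{Existence of smooth solutions}, and then to apply Theorem~\ref{existence of smooth solutions sub Thm} as a black box. Following \eqref{1.6}-\eqref{1.7}, I would write the drift velocity in the divergence form $u_j^\nu = \partial_i T_{ij}^\nu[\theta^\nu]$ with $T_{ij}^\nu = -\partial_i(-\Delta)^{-1} M_j^\nu$, so that the MG$^\nu$ equation \eqref{1.1}-\eqref{1.5} becomes an instance of the general system \eqref{0.1} in dimension $d=3$. The remaining work is to check that the sequence $\{T_{ij}^\nu\}_{\nu\ge0}$ satisfies conditions (1.11)-(1.14).

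For (1.11), the identity $\partial_i\partial_j T_{ij}^\nu f = 0$ is equivalent on the Fourier side to $k_j \widehat{M}_j^\nu(k) = 0$, which one verifies by a direct computation from the explicit symbols \eqref{1.2}-\eqref{1.4}; this is simply the incompressibility condition inherited from the reduced linear system \eqref{1.1-5}-\eqref{1.1-7}. Conditions (1.13) and (1.14) are already the content of Lemma~\ref{bound on operator} and Lemma~\ref{convergence of operator 2} respectively. Condition (1.12), that each $T_{ij}^\nu$ maps $L^\infty(\mathbb{T}^3)$ boundedly into $BMO(\mathbb{T}^3)$, follows from standard Calder\'on-Zygmund theory once one observes that the multipliers $\widehat{T}_{ij}^\nu$ are uniformly bounded on the Fourier lattice by Lemma~\ref{bound on operator}; the detailed verification in the $\nu=0$ case is carried out in \cite{FV11a} pp.~298-299, and the same argument goes through uniformly in $\nu \in [0,1]$ thanks to the uniform bound $C_1$ from Lemma~\ref{bound on operator}. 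With (1.11)-(1.14) in hand, Theorem~\ref{existence of smooth solutions sub Thm} applied in dimension $d=3$ immediately yields a classical solution $\theta^\nu \in C^\infty((0,\infty) \times \mathbb{T}^3)$ of \eqref{1.1}-\eqref{1.5} evolving from $\theta_0$ for every $\nu \ge 0$.

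The one subtle point, rather than a genuine obstacle, is the singular behaviour of the symbol $\widehat{M}^0$ on the set $\{k : k_3 = 0\}$ where the denominator $D(k)$ in \eqref{1.5} degenerates. This is handled by the standing restriction of the system to functions with zero mean in $x_3$ (noted immediately after \eqref{1.5} and discussed in Section~4 of \cite{FV11a}), a condition preserved by the evolution since $S$ itself is mean-free in $x_3$. Under this restriction $|k_3|\ge 1$ for every frequency $k \ne 0$ that actually appears in the Fourier sums, and the estimates of Lemmas~\ref{bound on operator}-\ref{convergence of operator 2} are carried out precisely in this regime. Thus no additional estimate is required beyond those already established, and the theorem follows as a direct corollary of the abstract theory together with the three lemmas of this section.
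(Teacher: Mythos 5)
Your proposal is correct and follows essentially the same route as the paper: the paper likewise reduces \eqref{1.1}--\eqref{1.5} to the abstract system \eqref{0.1} via $T_{ij}^{\nu}=-\partial_i(-\Delta)^{-1}M_j^{\nu}$, verifies (1.11), (1.13), (1.14) through the divergence-free structure of the symbols and Lemmas~\ref{bound on operator}--\ref{convergence of operator 2}, cites \cite{FV11a} pp.~298--299 for (1.12) exactly as you do, and then invokes Theorem~\ref{existence of smooth solutions sub Thm} with $d=3$. Your added remarks on the $k_3=0$ degeneracy and the zero-mean-in-$x_3$ restriction correctly make explicit a point the paper only notes in passing after \eqref{1.5}.
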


\begin{theorem}\label{convergence MG}
Let $\theta_0\in L^2$, $S\in C^{\infty}$ and $\kappa>0$ be given. Then if $\theta^{\nu},\theta$ are $C^\infty$ smooth classical solutions of \eqref{1.1}-\eqref{1.5} for $\nu>0$ and $\nu=0$ respectively with initial data $\theta_0$, then given $\tau>0$, for all $s\ge0$, we have
\begin{align*}
\lim_{\nu\rightarrow0} \|(\theta^{\nu}-\theta)(t,\cdot)\|_{ H^s}=0,
\end{align*} 
whenever $t\ge\tau$.
\end{theorem}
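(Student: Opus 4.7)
The plan is to reduce Theorem~\ref{convergence MG} to the abstract convergence result Theorem~\ref{Convergence of solutions as nu goes to 0}, by recasting the MG system \eqref{1.1}-\eqref{1.5} as a particular instance of the drift-diffusion equation \eqref{0.1} and verifying the four structural hypotheses (1.11)-(1.14) for its drift. Concretely, I would write
$$u_j^{\nu} = M_j^{\nu}[\theta^{\nu}] = \partial_i T_{ij}^{\nu}[\theta^{\nu}], \qquad T_{ij}^{\nu} := -\partial_i(-\Delta)^{-1} M_j^{\nu},$$
so that $\widehat{T_{ij}^{\nu}}(k) = k_i \widehat{M_j^{\nu}}(k)/|k|^2$. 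Condition (1.11) then follows from the divergence-free character of $u^{\nu}$ (equivalently, $k_j \widehat{M_j^{\nu}}(k) = 0$, which can be read off \eqref{1.2}-\eqref{1.4}), and (1.12) follows from the classical Calder\'on-Zygmund theory once one knows that the symbol $\widehat{T_{ij}^{\nu}}$ is bounded on $\mathbb{Z}^3 \setminus \{0\}$; indeed such a bound already appears in \cite{FV11a} for the $\nu=0$ case and is recovered uniformly in $\nu$ by the estimate in the next paragraph.

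The heart of the proof lies in verifying (1.13) and (1.14), which requires a careful analysis of the rational Fourier symbols
$$\widehat{M_j^{\nu}}(k) = \frac{N_j^{\nu}(k)}{D(k)}, \qquad D(k) = |k|^2 k_3^2 + (k_2^2+\nu|k|^4)^2,$$
whose behaviour in $k$ changes character depending on whether the $k_2^2$ or the $\nu|k|^4$ term dominates in $D(k)$. The natural case split is at $|k| \sim \nu^{-1/2}$. In the regime $|k| > \nu^{-1/2}$ the viscous piece dominates the denominator and a term-by-term estimate of $N_j^{\nu}(k)/(|k| D(k))$ exploits the factor $\nu|k|^4 \geq 1$; in the complementary regime $|k| \leq \nu^{-1/2}$ one uses the crude bound $\nu |k|^2 \leq 1$ together with $|k_3| \geq 1$ coming from the zero-mean-in-$x_3$ restriction recalled after \eqref{1.5}. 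Both cases yield $|\widehat{T_{ij}^{\nu}}(k)| \leq |\widehat{M^{\nu}}(k)|/|k| \leq C_{*}$ uniformly in $\nu \in (0,1]$, which is exactly (1.13). For (1.14), I would first check that on any fixed ball $\{|k| \leq L\}$ the symbol converges pointwise, $\widehat{M^{\nu}}(k) \to \widehat{M^0}(k)$ as $\nu \to 0$, with rate a polynomial in $\nu$ and $L$; combining this with the uniform bound from (1.13), splitting the sum over $k$ at an $L = L(\varepsilon)$ chosen so that the high-frequency tail of $\widehat{\nabla g}$ is smaller than $\varepsilon$, gives the desired convergence.

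The most delicate step is this uniform-in-$\nu$ bound (1.13): the singular limit $\nu \to 0$ degrades the operator $M^{\nu}$ from smoothing of order $2$ down to singular of order $-1$, so one cannot rely on any smoothing property of $M^{\nu}$ and must obtain the bound from the algebraic structure of $D(k)$ alone. Once (1.11)-(1.14) are established, Theorem~\ref{Existence of smooth solutions Thm} produces the smooth solutions $\theta^{\nu}$ and $\theta$ in the statement, and Theorem~\ref{Convergence of solutions as nu goes to 0} applied with $d=3$ delivers $\|\theta^{\nu}(t,\cdot)-\theta(t,\cdot)\|_{H^s} \to 0$ for every $t \geq \tau > 0$ and every $s \geq 0$, completing the proof.
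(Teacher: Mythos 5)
Your proposal is correct and follows essentially the same route as the paper's own proof: the paper likewise sets $T_{ij}^{\nu}=-\partial_i(-\Delta)^{-1}M_j^{\nu}$ and establishes exactly your three symbol estimates as Lemmas~\ref{bound on operator}--\ref{convergence of operator 2} (the uniform bound $|\widehat{T}^{\nu}_{ij}(k)|\le|\widehat{M}^{\nu}(k)|/|k|\le C_{*}$ via the same case split at $|k|=\nu^{-1/2}$ using $|k_3|\ge1$, pointwise convergence of the symbols on $\{|k|\le L\}$ with a rate polynomial in $\nu$ and $L$, and the tail splitting at $L=L(\varepsilon)$ for $g$ with $\nabla g\in L^2$), before concluding by invoking Theorems~\ref{Existence of smooth solutions Thm} and~\ref{Convergence of solutions as nu goes to 0}. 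The one loose point is your claim that (1.12) follows from bare boundedness of $\widehat{T}^{\nu}_{ij}$, since an $L^\infty\to BMO$ bound requires Calder\'on--Zygmund/H\"ormander--Mikhlin-type control of the symbol's derivatives rather than symbol boundedness alone; the paper handles this in the same spirit, deferring to the discussion in \cite{FV11a}, so this does not change the substance of your argument.
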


\section{The Existence of a Global Attractor}\label{The Existence of a Global Attractor}

With the results of Theorems~\ref{existence MG} and \ref{convergence MG} in place, we define a weak solution to the MG$^0$ equation which we call a ``{\it vanishing viscosity}''{\it solution}. We use this concept to prove the existence of a compact global attractor in $L^2(\mathbb{T}^3)$ for the MG$^\nu$ equations \eqref{1.1}-\eqref{1.5} including the critical equation where $\nu=0$. We further obtain the upper semicontinuity of the global attractor as $\nu$ vanishes. First, we define a class of solutions to \eqref{0.1} as follows.

\begin{definition}
A {\it weak solution} to \eqref{1.1}-\eqref{1.5} with $\nu=0$ is a function $\theta\in C_w([0,T];L^2(\mathbb{T}^3))$ with zero spatial mean that satisfies \eqref{1.1} in a distributional sense. That is, for any $\phi\in C_0^\infty((0,T)\times\mathbb{T}^3)$,
\begin{align*}
-\int_0^T\langle\theta,\phi_t\rangle dt-\int_0^T\langle u\theta, \nabla\phi\rangle dt+\kappa\int_0^T\langle\nabla\theta,\nabla\phi\rangle dt=\langle\theta_0,\phi(0,x)\rangle+\int_0^T\langle S,\phi\rangle dt,
\end{align*}
where $u=u\Big|_{\nu=0}$. A weak solution $\theta(t)$ to \eqref{1.1} on $[0,T]$ with $\nu=0$ is called a ``{\it vanishing viscosity}''{\it solution} if there exist sequences $\nu_n\rightarrow0$ and $\{\theta^{\nu_n}\}$ such that $\{\theta^{\nu_n}\}$ are smooth solutions to \eqref{1.1} as given by Theorem~\ref{existence of smooth solutions sub Thm} and $\theta^{\nu_n}\rightarrow\theta$ in $C_w([0,T];L^2)$ as $\nu_n\rightarrow0$. 
\end{definition}
\begin{remark}
By Theorem~\ref{existence MG} and Theorem~\ref{convergence MG}, for any initial data $\theta_0\in L^2$, there exists a ``vanishing viscosity'' solution $\theta$ of \eqref{0.1} on $[0,\infty)$ with $\theta(0)=\theta_0$.
\end{remark}

We prove that the equation \eqref{1.1} driven by a force $S$ possesses a compact global attractor in $L^2(\mathbb{T}^3)$ which is {\it upper semicontinuous} at $\nu=0$. More precisely, we have

\begin{theorem}\label{existence of attractor}
Assume $S\in C^\infty$. Then the system  \eqref{1.1}-\eqref{1.5} with $\nu=0$ possesses a compact global attractor $\mathcal{A}$ in $L^2(\mathbb{T}^3)$, namely
\begin{align*}
\mbox{$\mathcal{A}=\{\theta_0:\theta_0=\theta(0)$ for some bounded complete ``vanishing viscosity'' solution $\theta(t)\}$.}
\end{align*}
For any bounded set $\mathcal{B}\subset L^2(\mathbb{T}^3)$, and for any $\varepsilon,T>0$, there exists $t_0$ such that for any $t_1>t_0$, every ``vanishing viscosity'' solution $\theta(t)$ with $\theta(0)\in\mathcal{B}$ satisfies 
\begin{align*}
\|\theta(t)-x(t)\|_{L^2}<\varepsilon, \forall t\in[t_1,t_1+T],
\end{align*}
for some complete trajectory $x(t)$ on the global attractor $(x(t)\in\mathcal{A}, \forall t\in(-\infty,\infty))$. Furthermore, for $\nu\in[0,1]$, there exists a compact global attractor $\mathcal{A}^{\nu}\subset L^2$ for \eqref{1.1} such that $\mathcal{A}^{0}=\mathcal{A}$ and $\mathcal{A}^{\nu}$ is {\it upper semicontinuous} at $\nu=0$, which means that
\begin{align}\label{uc_0}
\mbox{$\sup_{\phi\in\mathcal{A}^{\nu}}\inf_{\psi\in\mathcal{A}}\|\phi-\psi\|_{L^2}\rightarrow0$ as $\nu\rightarrow0$.}
\end{align}
\end{theorem}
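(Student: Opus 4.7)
\medskip

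\noindent\textbf{Proof proposal for Theorem~6.3.}

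\medskip

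The plan is to cast the problem inside the evolutionary system framework of Cheskidov--Foias, mirroring the strategy of \cite{CD14} for the critical SQG equation, but substituting the ``classical'' Leray--Hopf viscosity solutions used there by the ``vanishing viscosity'' solutions furnished by Theorem~\ref{existence MG} and Theorem~\ref{convergence MG}. Concretely, let $E$ denote the family of all vanishing viscosity solutions $\theta:[0,\infty)\to L^2(\mathbb{T}^3)$ of MG$^0$. First I would establish the existence of an absorbing ball in $L^2$: for smooth solutions of MG$^\nu$, $\nu\ge 0$, testing the equation against $\theta^\nu$ and using $\nabla\cdot u^\nu=0$ gives the energy equality
\begin{equation*}
\tfrac{1}{2}\tfrac{d}{dt}\|\theta^\nu\|_{L^2}^2+\kappa\|\nabla\theta^\nu\|_{L^2}^2=\langle S,\theta^\nu\rangle,
\end{equation*}
together with Poincar\'e's inequality (all fields have zero spatial mean) this yields an absorbing radius $R_0=R_0(\kappa,S)$ independent of $\nu$, and the property passes to the weak limit in $C_w([0,\infty);L^2)$. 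This gives a uniform-in-$\nu$ bounded absorbing set $\mathcal{B}_0\subset L^2$ for the whole family, and, in particular, the evolutionary system $E$ is pointwise dissipative.

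Next I would verify the axioms of the Cheskidov--Foias abstract framework for $E$: (i) translation invariance (immediate, since the equation is autonomous), (ii) concatenation of trajectories (use Theorem~\ref{existence MG} starting from the endpoint), and (iii) $C([0,T];L^2_{w})$-closedness of $E\cap\{\theta(0)\in\mathcal{B}_0\}$, which is the crucial property and replaces the traditional asymptotic compactness hypothesis. For closedness, take a sequence $\theta_n\in E$ with $\theta_n(0)$ bounded in $L^2$; by definition, each $\theta_n$ is itself a weak limit of smooth MG$^{\nu_{n,k}}$ solutions, so a diagonal extraction together with the uniform bound from the energy equality and the dissipative term produces a further subsequence converging in $C_w([0,T];L^2)$ to a limit $\theta\in E$. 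With these axioms, the abstract theory of Cheskidov--Foias delivers a weak global attractor $\mathcal{A}\subset\mathcal{B}_0$ consisting of initial data of bounded complete trajectories, and strong attraction plus compactness in $L^2$ follow from the two ingredients (a)--(b) highlighted in the introduction: the energy equality prevents fast energy growth and the absence of anomalous dissipation on complete bounded trajectories upgrades weak attraction to strong attraction, exactly as in \cite{CD14}. To rule out anomalous dissipation for a complete bounded $\theta\in E$, I would use the vanishing viscosity approximants $\theta^{\nu_n}\to\theta$ and the uniform $H^s$-bounds of Theorem~\ref{uniform bound on solution Thm} on any compact sub-interval bounded away from $t=-\infty$, permitting passage to the limit in $\kappa\int\|\nabla\theta^{\nu_n}\|_{L^2}^2dt$ and hence yielding the energy equality for $\theta$ itself.

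For $\nu\in(0,1]$, the operator $M^\nu$ is smoothing of degree $2$, so the classical theory (absorbing set in $H^1$ via a standard $H^1$-estimate, compactness by Rellich, continuity of the semigroup) directly yields a compact global attractor $\mathcal{A}^\nu\subset L^2(\mathbb{T}^3)$; these attractors are automatically bounded in $\mathcal{B}_0$ by the $\nu$-uniform energy estimate. The identification $\mathcal{A}^0=\mathcal{A}$ is then the definition above. Finally, for upper semicontinuity, suppose \eqref{uc_0} fails: then there exist $\varepsilon_0>0$, a sequence $\nu_n\to 0$ and $\phi_n\in\mathcal{A}^{\nu_n}$ with $\dist_{L^2}(\phi_n,\mathcal{A})\ge\varepsilon_0$. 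Each $\phi_n$ is the value at $t=0$ of a bounded complete trajectory $\theta_n(\cdot)$ of MG$^{\nu_n}$, uniformly bounded in $L^2$ by $R_0$. The uniform bound, the closedness property above, and the convergence statement of Theorem~\ref{convergence MG} applied on the interval $[-T,T]$ for arbitrary $T$ allow a diagonal extraction producing a bounded complete vanishing viscosity solution $\theta_\infty$ of MG$^0$ with $\theta_n\to\theta_\infty$ in $C_w([-T,T];L^2)$ for each $T$; consequently $\phi_n\to\theta_\infty(0)\in\mathcal{A}$ strongly in $L^2$ (strong convergence because the limit lies on the global attractor and the attractor is strongly compact, so weak and strong convergence coincide along such sequences), a contradiction.

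The main obstacle I expect is the verification of the absence of anomalous dissipation for complete bounded trajectories in $E$ at the critical level $\nu=0$. This is the delicate point: the a priori estimates of Theorem~\ref{uniform bound on solution Thm} are $\nu$-uniform only on intervals $[t_1,t_2]$ bounded away from the initial time, so some care is required to ensure the higher-regularity gain can be leveraged on arbitrarily long backward-in-time intervals for a complete trajectory sitting inside $\mathcal{B}_0$; handling this via time translations and the translation invariance of the absorbing property is the crux of the argument.
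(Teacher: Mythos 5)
Your overall architecture matches the paper's: both proofs cast the vanishing viscosity solutions as an evolutionary system in the sense of Cheskidov--Foias, obtain a $\nu$-uniform absorbing ball from the energy balance, invoke the abstract attractor theorem of \cite{CD14} (with the energy equality and absence of anomalous dissipation replacing asymptotic compactness), and treat the viscous attractors $\mathcal{A}^\nu$ separately before passing to the limit $\nu\to 0$. However, there is a genuine gap at the final and most delicate step, the upper semicontinuity \eqref{uc_0}. In your contradiction argument you extract $\phi_n\in\mathcal{A}^{\nu_n}$ with $\dist_{L^2}(\phi_n,\mathcal{A})\ge\varepsilon_0$, produce a complete bounded MG$^0$ trajectory $\theta_\infty$ with $\theta_n\to\theta_\infty$ in $C_w([-T,T];L^2)$, and then assert $\phi_n\to\theta_\infty(0)$ \emph{strongly} in $L^2$ ``because the limit lies on the global attractor and the attractor is strongly compact, so weak and strong convergence coincide along such sequences.'' That justification fails: strong compactness of the limit set $\mathcal{A}$ says nothing about the sequence $\phi_n$, which lives in the \emph{different} sets $\mathcal{A}^{\nu_n}$; these are only uniformly contained in the absorbing ball $\mathcal{Y}$, which is compact merely in the weak topology (the paper's condition L2 is a compact set in $(\mathcal{Y},d_w)$, not in the norm topology). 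A sequence $\phi_n=\psi+e_n$ with $e_n\rightharpoonup 0$, $\|e_n\|_{L^2}=1$, converges weakly to $\psi\in\mathcal{A}$ with no strong convergence, so your step as stated proves only \emph{weak} upper semicontinuity.

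This weak-to-strong upgrade is exactly what the paper's Lemma~\ref{weak implies strong} supplies, and it requires real work: from the energy equality and the $\nu$-independent radius of $\mathcal{Y}$ one gets $\int_0^1\|\nabla\theta^{\nu_j}_j\|_{L^2}^2\,dt\le C$ uniformly, hence times $t_j\to t^*$ at which uniform $H^1$ bounds hold; Rellich compactness then gives strong $L^2$ convergence of $\theta^{\nu_j}_j(t_j)$ along a subsequence; finally the \emph{quantitative} continuity of the solution map in $\nu$ (the paper's Lemma~\ref{continuity in nu lemma}, giving $\|(\pi^{\nu_1}(t)-\pi^{\nu_2}(t))\theta_0\|_{L^2}^2\le Ce^{Ct}t\,|\nu_1-\nu_2|$, which in turn rests on the uniform $H^s$ bounds of Theorem~\ref{uniform bound on solution Thm} and the symbol estimates of Lemmas~\ref{bound on operator}--\ref{convergence of operator}) propagates this strong convergence forward from $t^*$ to the time where $\phi_n$ sits, producing the contradiction. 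Your proposal never establishes this quantitative $\nu$-continuity, and without it the propagation step has no substitute. (The paper then deduces weak upper semicontinuity from conditions L1--L3 via \cite{HOR15} rather than by a bare diagonal argument, but that difference is cosmetic.) A secondary, fixable looseness: the energy \emph{equality} does not simply ``pass to the weak limit'' — weak convergence only preserves the inequality — and the paper instead proves Proposition~\ref{energy equality proposition} by showing the nonlinear flux vanishes via a Littlewood--Paley/Onsager-type argument as in \cite{CCF08}, \cite{CD14}; your alternative of exploiting the uniform $H^s$ bounds away from the initial time is workable in spirit but should be said carefully near $t_0=0$.
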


Before we give the proof of Theorem~\ref{existence of attractor}, we state the following proposition. It gives an energy equality which is important in obtaining an absorbing ball for \eqref{1.1} (see Remark~\ref{absorbing ball remark} below). 
\begin{proposition}[The energy equality]\label{energy equality proposition}
Let $\theta(t)$ be a ``vanishing viscosity'' solution of \eqref{1.1} on $[0,\infty)$ with $\theta(0)\in L^2$. Then $\theta(t)$ satisfies the following energy equality:
\begin{align}\label{6.7}
\frac{1}{2}\|\theta(t)\|_{L^2}^2+\kappa\int_{t_0}^t\|\nabla\theta(s,\cdot)\|^2_{L^2}ds=\frac{1}{2}\|\theta(t_0)\|^2_{L^2}+\int_{t_0}^t\int_{\mathbb{T}^3}S\theta dxds,
\end{align}
for all $0\le t_0\le t$.
\end{proposition}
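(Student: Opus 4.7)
I would prove the equality in three stages: first for the smooth approximating solutions, then by taking a vanishing-viscosity limit on intervals bounded away from the initial time, and finally by pushing the endpoint to $t_0 = 0$. By definition of a vanishing-viscosity solution, there is a sequence $\nu_n \to 0$ and smooth solutions $\theta^{\nu_n}$ of \eqref{1.1} with $\theta^{\nu_n}(0) = \theta_0$ such that $\theta^{\nu_n} \to \theta$ in $C_w([0,T]; L^2)$. For each $\nu_n > 0$, multiplying the equation by $\theta^{\nu_n}$ and integrating in space is legitimate since $\theta^{\nu_n}\in C^{\infty}((0,\infty)\times\mathbb{T}^3)$, and the advection contribution $\int u^{\nu_n}\cdot\nabla\theta^{\nu_n}\cdot\theta^{\nu_n}\,dx$ vanishes because $u^{\nu_n}$ is divergence free. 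Integrating in $t$ and using the standard $L^2$-continuity at $t=0$ of the smooth drift-diffusion solution, one obtains
\begin{align*}
\tfrac{1}{2}\|\theta^{\nu_n}(t)\|_{L^2}^2 + \kappa\int_{t_0}^t \|\nabla\theta^{\nu_n}(s)\|_{L^2}^2\,ds = \tfrac{1}{2}\|\theta^{\nu_n}(t_0)\|_{L^2}^2 + \int_{t_0}^t\!\int_{\mathbb{T}^3} S\,\theta^{\nu_n}\,dx\,ds
\end{align*}
for every $0\le t_0\le t$.

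\textbf{Limit on $[t_0,t]$ with $t_0>0$.} Fix $t_0>0$. Theorem~\ref{convergence MG} gives pointwise strong $H^s$ convergence $\theta^{\nu_n}(s)\to\theta(s)$ for every $s\ge t_0$, and the uniform $H^s$ bound from Theorem~\ref{uniform bound on solution Thm} supplies a dominating constant on $[t_0,t]$. Hence the dissipation integral converges by dominated convergence, the source integral converges similarly (using $S\in L^\infty$ and the uniform bound on $\|\theta^{\nu_n}\|_{L^\infty_t L^2_x}$), and the two endpoint norms converge pointwise. The desired equality for $\theta$ then follows on every $[t_0,t]$ with $t_0>0$.

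\textbf{Extension to $t_0=0$.} Sending $t_0\to 0^+$ in the identity just obtained, the dissipation integral $\int_{t_0}^t\|\nabla\theta(s)\|_{L^2}^2\,ds$ converges to $\int_0^t\|\nabla\theta(s)\|_{L^2}^2\,ds$ by monotone convergence (the integrand is in $L^1(0,t)$ by Fatou applied to the smooth bounds), and the source integral converges by dominated convergence. The only remaining point, and the heart of the argument, is to show
\begin{align*}
\lim_{t_0\to 0^+}\|\theta(t_0)\|_{L^2}^2 = \|\theta_0\|_{L^2}^2.
\end{align*}
Weak $L^2$ continuity of $\theta$ at $0$ supplies $\|\theta_0\|_{L^2}^2 \le \liminf_{t_0\to 0^+}\|\theta(t_0)\|_{L^2}^2$. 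For the matching upper bound, the smooth energy identity on $[0,t_0]$ yields $\|\theta^{\nu_n}(t_0)\|_{L^2}^2 \le \|\theta_0\|_{L^2}^2 + 2\bigl|\int_0^{t_0}\!\int S\theta^{\nu_n}\,dx\,ds\bigr|$, and the right-hand side is bounded by $\|\theta_0\|_{L^2}^2 + 2 t_0\|S\|_{L^2}\|\theta^{\nu_n}\|_{L^\infty_t L^2_x}$, uniformly in $n$. Passing $n\to\infty$ (using strong $L^2$ convergence at the fixed time $t_0>0$) and then $t_0\to 0^+$ gives $\limsup_{t_0\to 0^+}\|\theta(t_0)\|_{L^2}^2 \le \|\theta_0\|_{L^2}^2$, closing the sandwich. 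The main subtlety is precisely this step: because the convergence $\theta^{\nu_n}\to\theta$ is only weak at $t=0$, the norm identity there must be recovered indirectly from the sharpness of the approximate energy equality rather than from continuity of the norm.
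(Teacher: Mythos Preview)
Your argument is correct but follows a genuinely different route from the paper. The paper's proof works directly with the weak solution $\theta$ and shows that the flux term $\int_{t_0}^t\int_{\mathbb{T}^3} u\theta\cdot\nabla\theta\,dx\,ds$ vanishes, invoking Littlewood--Paley commutator estimates in the style of \cite{CD14} and the Onsager-type argument of \cite{CCF08}. You instead pass to the limit in the exact energy equalities satisfied by the smooth approximants $\theta^{\nu_n}$, using the strong $H^s$ convergence of Theorem~\ref{convergence MG} and the uniform bounds of Theorem~\ref{uniform bound on solution Thm}.

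Both strategies are valid. Yours is more elementary in that it avoids the paraproduct machinery entirely, at the price of leaning on the earlier convergence theorems and on the specific vanishing-viscosity structure of the solution class. One remark: once you invoke Theorem~\ref{convergence MG} in your Step~2, you have in effect identified the vanishing-viscosity solution $\theta$ with the smooth classical $\nu=0$ solution on $(0,\infty)$ (strong $H^s$ limit plus uniqueness of weak limits), so the energy equality on $[t_0,t]$ with $t_0>0$ is already available by multiplying the equation by $\theta$ itself and integrating; the limit passage you write out is correct but slightly roundabout. The paper's flux-vanishing approach, by contrast, does not require this identification and would extend to any weak solution with sufficient (Onsager-supercritical) regularity, not only to vanishing-viscosity limits.
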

\begin{proof}
It suffices to show that the flux term $\dis\int_{t_0}^t\int_{\mathbb{T}^3}u\theta\cdot\nabla\theta dxds$ equals zero; for which a proof can be found in \cite{CD14} and we omit the details. The proof uses techniques of Littlewood-Paley decomposition. It is analogous to the proof given in \cite{CCF08} that the energy flux is zero for weak solutions of the three dimensional Euler equation that are smoother than Onsager critical.
\end{proof}
\begin{remark}\label{strongly continous}
Based on the equality \eqref{6.7}, we can see that every ``vanishing viscosity'' solution to \eqref{1.1} is strongly continuous in $t$.
\end{remark}
\begin{remark}\label{absorbing ball remark}
Moreover, in view of \eqref{6.7}, there exists an absorbing ball $\mathcal{Y}$ for \eqref{1.1} given by
\begin{align}\label{absorbing ball}
\mathcal{Y}=\{\theta\in L^2: \|\theta\|_{L^2}\le R\},
\end{align}
where $R$ is any number larger than $\kappa^{-1}\|S\|_{H^{-1}(\mathbb{T}^3)}$. Then for any bounded set $\mathcal{B}\subset L^2$, there exists a time $t_0$ such that
\begin{align*}
\theta(t)\in\mathcal{Y},\qquad\forall t\ge t_0,
\end{align*}
for every ``vanishing viscosity'' solution $\theta(t)$ with the initial data $\theta(0)\in\mathcal{B}$.
\end{remark}
\noindent To facilitate the proof of Theorem~\ref{existence of attractor}, we introduce the following notions:
\begin{itemize}
\item[$\cdot$] We denote the strong and weak distances on $L^2(\mathbb{T}^3)$ respectively by
\begin{align*}
d_s(\phi,\psi)=\|\phi-\psi\|_{L^2};\qquad d_w(\phi,\psi)=\sum_{k\in\mathbb{Z}^3}\frac{1}{2^{|k|}}\frac{|\hat\phi_k-\hat\psi_k|}{1+|\hat\phi_k-\hat\psi_k|},
\end{align*}
where $\hat\phi_k$ and $\hat\psi_k$ are the Fourier coefficients of $\phi$ and $\psi$.
\item[$\cdot$] We let 
\begin{align*}
\mathcal{T}=\{\mbox{$I: I=[T,\infty)\subset\R$, or $I=(-\infty,\infty)$}\},
\end{align*}
and for each $I\subset\mathcal{T}$ , let $\mathcal{F}(I)$ denote the set of all $\mathcal{Y}$-valued functions on $I$ (here $\mathcal{Y}$ is the absorbing ball given by \eqref{absorbing ball} in Remark~\ref{absorbing ball remark}). 
\item[$\cdot$] We define $\pi^{\nu}: L^2\rightarrow L^2$ as the map $\pi^{\nu}\theta_0=\theta^{\nu}$, where $\theta^\nu$ is the solution to \eqref{1.1}-\eqref{1.5} given by Theorem~\ref{existence MG}. 
\end{itemize}

We first prove the following lemma which gives the continuity of $\pi^{\nu}(t)\theta_0$ in $\nu$ for a given $\theta_0$. More precisely, we have:
\begin{lemma}\label{continuity in nu lemma}
For $t>0$, $\pi^{\nu}(t)\theta_0$ is continuous in $\nu$, uniformly for $\theta_0$ in compact subsets of $L^2$.
\end{lemma}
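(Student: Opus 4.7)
The plan is to run the energy-type difference estimate from the proof of Theorem~\ref{Convergence of solutions as nu goes to 0}, but with the reference solution taken at an arbitrary $\nu_0\in[0,1]$ rather than only $\nu_0=0$, and then to upgrade pointwise-in-$\theta_0$ continuity to uniform continuity on a compact set $K\subset L^2$ by exploiting the fact that every bound in Theorem~\ref{uniform bound on solution Thm} depends on $\theta_0$ only through $\|\theta_0\|_{L^2}$.

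Setting $\phi(\cdot)=\pi^\nu(\cdot)\theta_0-\pi^{\nu_0}(\cdot)\theta_0$ with $\phi(0)=0$, the function $\phi$ satisfies a drift-diffusion equation with divergence-free drift $u^\nu$ and inhomogeneity $-(u^\nu-u^{\nu_0})\cdot\nabla\theta^{\nu_0}$. Reproducing the calculation of Theorem~\ref{Convergence of solutions as nu goes to 0} (Case~1) verbatim should give, for any $0<t_1<t$,
\[
\|\phi(t)\|_{L^2}^2\le \exp\!\left(\tfrac{4C_*^2}{\kappa}\!\int_{t_1}^t\!\|\nabla\theta^{\nu_0}\|_{L^\infty}^2\,ds\right)\!\left(\|\phi(t_1)\|_{L^2}^2+\tfrac{\kappa}{4C_*^2}\int_{t_1}^t I_2^\nu(s)\,ds\right),
\]
where $I_2^\nu(s)=\sum_{k\neq 0}|\widehat{T^\nu_{ij}}(k)-\widehat{T^{\nu_0}_{ij}}(k)|^2\,|\widehat{\nabla\theta^{\nu_0}}(s,k)|^2$. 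By Theorem~\ref{uniform bound on solution Thm}, the exponential prefactor is bounded in terms of $t_1,t,\kappa,S,C_*$ and $\|\theta_0\|_{L^2}$ only, hence uniformly for $\theta_0\in K$.

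Next I would show that $\int_{t_1}^t I_2^\nu(s)\,ds\to 0$ as $\nu\to\nu_0$, uniformly for $\theta_0\in K$, via a low/high-frequency split at $|k|=L$. For any $N\ge 1$ the tail is bounded by $4C_1^2\,L^{-2N}\|\theta^{\nu_0}\|_{L^\infty([t_1,t];H^{N+1})}^2$, which by Theorem~\ref{uniform bound on solution Thm} is uniformly small in $\theta_0\in K$ once $L$ is large (with $C_1$ from Lemma~\ref{bound on operator}). The head is a finite sum, and the explicit formulas \eqref{1.2}-\eqref{1.5} make each symbol $\widehat{T^\nu_{ij}}(k)$ a continuous function of $\nu\in[0,1]$ at every fixed $k\in\mathbb{Z}^3\setminus\{0\}$ with $k_3\neq 0$ (the only modes present under the zero-$x_3$-mean restriction, which keeps $D(k)$ strictly positive for all $\nu\ge 0$), so the head tends to $0$ uniformly in $s\in[t_1,t]$ and in $\theta_0\in K$ when combined with the uniform $L^2$-bound on $\nabla\theta^{\nu_0}$. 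To eliminate $\|\phi(t_1)\|_{L^2}$, I would first prove the statement for smooth $\theta_0$ (where $\|\nabla\theta^{\nu_0}\|_{L^\infty}$ is integrable down to $t=0$, so the Gr\"{o}nwall estimate can start at $t_1=0$ where $\phi$ vanishes) and then extend to arbitrary $\theta_0\in K$ by combining continuity of $\pi^{\nu_0}$ in its initial datum (proved by a parallel energy estimate with $\nu$ held fixed) with a standard $3\varepsilon$-net argument on the compact set $K$.

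The hardest part is the singular limit $\nu_0=0$, where $M^\nu$ switches from smoothing of degree $2$ to singular of degree $-1$. This is already handled by Lemmas~\ref{bound on operator}-\ref{convergence of operator 2}: Lemma~\ref{bound on operator} supplies the uniform-in-$\nu$ symbol bound needed for the high-frequency tail, while Lemma~\ref{convergence of operator} provides precisely the uniform-on-bounded-frequency convergence needed for the head at $\nu_0=0$; for $\nu_0\in(0,1]$ symbol continuity follows immediately from the explicit formulas \eqref{1.2}-\eqref{1.5}.
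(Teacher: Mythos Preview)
Your proposal is correct and shares the same core as the paper's proof: an energy estimate for the difference $\pi^{\nu_1}(t)\theta_0-\pi^{\nu_2}(t)\theta_0$ modeled on Case~1 of Theorem~\ref{Convergence of solutions as nu goes to 0}, with Gr\"onwall and the uniform bounds of Theorem~\ref{uniform bound on solution Thm} supplying the $\mathcal K$-uniformity. The differences are in execution. The paper proceeds more directly: invoking the explicit symbol computations of Lemmas~\ref{bound on operator}--\ref{convergence of operator}, it asserts a quantitative bound
\[
\sum_{k}|\widehat{T^{\nu_1}_{ij}}(k)-\widehat{T^{\nu_2}_{ij}}(k)|^2\,|\widehat{\nabla\theta^{\nu_2}}(k)|^2\le C(\nu_1,\nu_2,\mathcal K,t)\,|\nu_1-\nu_2|,
\]
and then runs Gr\"onwall from $t=0$ (using $\phi(0)=0$) to arrive at $\|\pi^{\nu_1}(t)\theta_0-\pi^{\nu_2}(t)\theta_0\|_{L^2}^2\le Ce^{Ct}t\,|\nu_1-\nu_2|$, which delivers uniform continuity on $\mathcal K$ in one stroke. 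Your route is more cautious on two points: you use only qualitative symbol continuity via a low/high-frequency split rather than a Lipschitz bound, and you handle the potential $t\downarrow 0$ singularity of $\|\nabla\theta^{\nu_0}\|_{L^\infty}$ for rough data by first treating smooth $\theta_0$ (where Gr\"onwall can start at $0$) and then extending by a $3\varepsilon$-net on $\mathcal K$ together with continuous dependence on the initial datum. Your version is longer and needs an auxiliary continuity-in-datum statement, but it avoids the paper's informality in integrating from $t=0$ while citing the bound~\eqref{2.1}, which is only stated for $t\ge t_1>0$; the paper's version is shorter and yields a quantitative (Lipschitz-type) modulus of continuity.
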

\begin{proof}
We let $\mathcal{K}$ be a compact subset of $L^2$ and fix $\theta_0\in\mathcal{K}$. Let $\nu_1,\nu_2\in[0,1]$, then for each $t>0$, we have
\begin{align}\label{uc1}
\frac{1}{2}\frac{d}{dt}\|(\theta^{\nu_1}-\theta^{\nu_2})(t,\cdot)\|_{L^2}^2+\frac{\kappa}{2}\|\nabla(\theta^{\nu_1}-\theta^{\nu_2})(t,\cdot)\|_{L^2}^2=-\int(u^{\nu_1}-u^{\nu_2})\cdot\nabla\theta^{\nu_2}\cdot(\theta^{\nu_1}-\theta^{\nu_2})(t,x)dx,
\end{align}
where $\theta^{\nu_i}(0)=\theta_0$ for $i=1,2$. We follow the argument given in the proof of Theorem~\ref{Convergence of solutions as nu goes to 0} to obtain
\begin{align}
&\left|-\int(u^{\nu}-u)\cdot\nabla\theta^{\nu_2}\cdot(\theta^{\nu_1}-\theta^{\nu_2})(t,x)dx\right|\notag\\
&\le\frac{\kappa}{4C^2}\|(u^{\nu_1}-u^{\nu_2})(t,\cdot)\|^2_{L^2}+\frac{4C^2}{\kappa}\|(\theta^{\nu_1}-\theta^{\nu_2})(t,\cdot)\|_{L^2}^2\|\nabla\theta^{\nu_2}(t,\cdot)\|_{L^\infty}^2.\label{uc2}
\end{align}
Using the bound \eqref{2.1} for $\theta^{\nu_2}$, the second term on the right side of \eqref{uc2} is bounded by \newline $C\|(\theta^{\nu_1}-\theta^{\nu_2})(t,\cdot)\|_{L^2}^2$, for some constant $C>0$ independent of $\theta_0$ but depends on the compact set $\mathcal{K}$ and $t$. On the other hand, the term $\|(u^{\nu_1}-u^{\nu_2})(t,\cdot)\|^2_{L^2}$ can be bounded as follows.
\begin{align*}
\|(u^{\nu_1}-u^{\nu_2})(t,\cdot)\|^2_{L^2}&=\sum_{k\in\mathbb{Z}^3}|(\widehat{\partial_{i} T_{ij}^{\nu_1}}\widehat{\theta^{\nu_1}}-\widehat{\partial_{i} T_{ij}^{\nu_2}}\widehat{\theta^{\nu_2}})(k)|^2\notag\\
&\le\sum_{k\in\mathbb{Z}^3}|\widehat{T_{ij}^{\nu_1}}(k)|^2|\widehat{\nabla(\theta^{\nu_1}-\theta^{\nu_2})}(k)|^2+\sum_{k\in\mathbb{Z}^3}|\widehat{T^{\nu_1}_{ij}}(k)-\widehat{T^{\nu_2}_{ij}}(k)|^2|\widehat{\nabla\theta^{\nu_2}}(k)|^2\\
&\le C^2\|\nabla(\theta^{\nu_1}-\theta^{\nu_2})(t,\cdot)\|_{L^2}^2+\sum_{k\in\mathbb{Z}^3}|\widehat{T^{\nu_1}_{ij}}(k)-\widehat{T^{\nu_2}_{ij}}(k)|^2|\widehat{\nabla\theta^{\nu_2}}(k)|^2,
\end{align*}
where $T^{\nu_1}_{ij},T^{\nu_2}_{ij}$ are defined in \eqref{1.7}. Using the bound \eqref{2.1} for $\theta^{\nu_2}$ again and applying the similar argument given in the proof of Lemma~\ref{bound on operator} and Lemma~\ref{convergence of operator}, there exists constant $C=C(\nu_1,\nu_2,\mathcal{K},t)>0$ such that
\begin{align*}
\sum_{k\in\mathbb{Z}^3}|\widehat{T^{\nu_1}_{ij}}(k)-\widehat{T^{\nu_2}_{ij}}(k)|^2|\widehat{\nabla\theta^{\nu_2}}(k)|^2\le C|\nu_1-\nu_2|.
\end{align*}
Hence we conclude from \eqref{uc1} that
\begin{align*}
\frac{1}{2}\frac{d}{dt}\|(\theta^{\nu_1}-\theta^{\nu_2})(t,\cdot)\|_{L^2}^2\le C\left[|\nu_1-\nu_2|+\|(\theta^{\nu_1}-\theta^{\nu_2})(t,\cdot)\|_{L^2}^2\right].
\end{align*}
Integrating the above over $t$ and using Gr\"{o}nwall's inequality, it further implies
\begin{align}\label{continuity in nu} 
\|(\pi^{\nu_1}(t)\theta_0-\pi^{\nu_2}(t)\theta_0)\|_{L^2}^2=\|(\theta^{\nu_1}-\theta^{\nu_2})(t,\cdot)\|_{L^2}^2&\le C e^{Ct}\int_0^t|\nu_1-\nu_2|ds\notag\\
&\le C e^{Ct}t|\nu_1-\nu_2|,
\end{align}
hence we prove the continuity of $\pi^{\nu}$ in $\nu$ uniformly for $\theta_0$ in $\mathcal{K}$.
\end{proof}

Next, the following lemma shows that the {\it weak} upper semicontinuity implies the {\it strong} upper semicontinuity.
\begin{lemma}\label{weak implies strong}
Let $\phi^{\nu_j}\in\mathcal{A}^{\nu_j}$ and $\psi_j\in\mathcal{A}$ be such that
\begin{align*}
\lim_{j\rightarrow\infty} d_w(\phi^{\nu_j},\psi_j)=0,
\end{align*}
for some sequence $\nu_j\rightarrow0$. Then
\begin{align*}
\lim_{j\rightarrow0}\|\phi^{\nu_j}-\psi_j\|_{L^2}=0.
\end{align*}
\end{lemma}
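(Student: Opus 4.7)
The strategy is to combine the weak-type convergence encoded by the metric $d_w$ with uniform higher Sobolev bounds for both sequences, obtained from the complete-trajectory structure of the attractors together with Theorem~4.1, and then upgrade to strong $L^2$ convergence via a low/high frequency split. The key point is that $d_w$-convergence automatically forces pointwise convergence in every Fourier mode, while a uniform $H^s$ bound controls the tail.

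Step~1 of the plan is to establish that $\|\phi^{\nu_j}\|_{H^s}+\|\psi_j\|_{H^s}\le C(s,R,\kappa,S,C_*)$ independently of $j$ for every $s\ge0$. Since $\phi^{\nu_j}\in\mathcal{A}^{\nu_j}$, standard attractor theory provides a complete bounded trajectory $\theta^{\nu_j}(t)$ of MG$^{\nu_j}$ with $\theta^{\nu_j}(0)=\phi^{\nu_j}$, and by Remark~6.4 one has $\|\theta^{\nu_j}(t)\|_{L^2}\le R$ for all $t\in\mathbb{R}$. Treating $t=-1$ as the initial time (so that $\|\theta^{\nu_j}(-1)\|_{L^2}\le R$ plays the role of the $L^2$ datum) and applying Theorem~4.1 on the shifted interval $[-1/2,0]$ yields the uniform bound on $\phi^{\nu_j}=\theta^{\nu_j}(0)$. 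For $\psi_j\in\mathcal{A}$ one selects a smooth approximating sequence $\{\tilde\theta_j^{\mu_n}\}$ realizing the complete vanishing viscosity trajectory $\tilde\theta_j$ through $\psi_j$, arranges that $\tilde\theta_j^{\mu_n}(-1)$ is bounded in $L^2$ by $R$ (possible because $\tilde\theta_j(-1)\in\mathcal{Y}$ and one may absorb the approximants into $\mathcal{Y}$ by a time shift), applies Theorem~4.1 to each $\tilde\theta_j^{\mu_n}$, and then passes to the weak limit using lower semicontinuity of $H^s$ norms.

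Step~2 is the frequency split. For any $K>0$,
\begin{equation*}
\|\phi^{\nu_j}-\psi_j\|_{L^2}^2
=\sum_{|k|\le K}|\widehat{\phi^{\nu_j}}(k)-\widehat{\psi_j}(k)|^2
+\sum_{|k|>K}|\widehat{\phi^{\nu_j}}(k)-\widehat{\psi_j}(k)|^2,
\end{equation*}
and the high-frequency tail is majorized by $4C^2/K^{2s}$ using Step~1. On the other hand, since each individual term $2^{-|k|}|\widehat{\phi^{\nu_j}}(k)-\widehat{\psi_j}(k)|/\bigl(1+|\widehat{\phi^{\nu_j}}(k)-\widehat{\psi_j}(k)|\bigr)$ is dominated by $d_w(\phi^{\nu_j},\psi_j)\to 0$, we deduce $\widehat{\phi^{\nu_j}}(k)-\widehat{\psi_j}(k)\to 0$ as $j\to\infty$ for every fixed $k$. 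Given $\varepsilon>0$, choose $K$ so that $4C^2/K^{2s}<\varepsilon/2$; then the (finite) low-frequency sum is less than $\varepsilon/2$ for all sufficiently large $j$. This delivers $\|\phi^{\nu_j}-\psi_j\|_{L^2}\to 0$.

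The principal obstacle is carrying out Step~1 for $\psi_j$, since a vanishing viscosity solution is a priori only a weak $L^2$ limit rather than a classical solution. The delicate point is choosing the approximating smooth MG$^{\mu_n}$ solutions so that their initial data have uniformly bounded $L^2$-norm (the boundedness of the complete trajectory inside the absorbing ball $\mathcal{Y}$ is what makes this possible), so that the bounds furnished by Theorem~4.1 survive the weak passage to the limit. Once this uniform smoothing is in place, Step~2 is essentially a soft compactness argument.
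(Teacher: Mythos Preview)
Your overall plan---uniform $H^s$ bounds on both sequences plus a low/high frequency split---is genuinely different from the paper's route. The paper argues by contradiction: it picks complete bounded trajectories $\theta^{\nu_j}_j$, $\theta^0_j$ through $\phi^{\nu_j}$, $\psi_j$, uses the energy equality to find times $t_j\in[0,1]$ at which both trajectories lie in a fixed $H^1$-ball, extracts (via $H^1\hookrightarrow L^2$ compactness) strongly convergent subsequences $\theta^{\nu_j}_j(t_j)\to\bar\theta_0$, $\theta^0_j(t_j)\to\bar{\bar\theta}_0$, and then propagates this strong convergence forward to time $1$ using the continuous-dependence estimate of Lemma~6.6. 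The weak hypothesis forces $\bar\theta(1)=\bar{\bar\theta}(1)$, yielding the contradiction. Your approach trades this dynamic argument for a single static compactness step (uniform $H^s$ control at time $0$), which is cleaner when it works.

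The difficulty is exactly where you flag it: Step~1 for $\psi_j$. Your proposed fix---``arrange that $\tilde\theta_j^{\mu_n}(-1)$ is bounded in $L^2$ by $R$''---does not go through as written. The approximating sequence $\tilde\theta_j^{\mu_n}$ is part of the \emph{definition} of the vanishing viscosity solution $\tilde\theta_j$; you are not free to replace it by the MG$^{\mu_n}$ solutions emanating from $\tilde\theta_j(-1)$ unless you already know those solutions converge to $\tilde\theta_j$ (and not merely to the classical MG$^0$ solution with the same datum), which is precisely a weak--strong uniqueness statement you have not established. Likewise, weak convergence $\tilde\theta_j^{\mu_n}(-1)\rightharpoonup\tilde\theta_j(-1)$ gives a bound on $\sup_n\|\tilde\theta_j^{\mu_n}(-1)\|_{L^2}$ that may depend on $j$, so the constant coming out of Theorem~4.1 is not uniform in $j$. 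The ``time shift into $\mathcal{Y}$'' idea fails for the same reason: the absorbing time depends on the initial $L^2$ bound of the approximants.

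What would actually close the gap is a direct argument that every complete bounded vanishing viscosity trajectory coincides, for $t>t_0$, with the classical $C^\infty$ solution of MG$^0$ launched from its value at $t_0$; then Theorem~4.1 applies to $\tilde\theta_j$ itself with datum $\tilde\theta_j(-1)\in\mathcal{Y}$, and your frequency split finishes the proof. The paper sidesteps this issue by never asking for more than an $H^1$ bound at \emph{some} time (furnished directly by the energy equality on the vanishing viscosity solution), and then using continuous dependence to transport strong convergence to the target time.
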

\begin{proof}
Assume the conclusion of the lemma does not hold. Then passing to a subsequence and dropping a subindex, we can assume that
\begin{align}\label{contradiction}
\liminf_{j\rightarrow0}\|\phi^{\nu_j}-\psi_j\|_{L^2}>0.
\end{align}
There are solutions $\theta^{\nu_j}_j(t,\cdot),\theta^{0}_j(t,\cdot)$ (with $\nu=\nu_j$ and $\nu=0$ respectively) which are complete bounded in $L^2$ such that $\theta^{\nu_j}_j(\cdot,1)=\phi^{\nu_j}$ and $\theta^0_j(\cdot,1)=\psi_j$.

Due to the energy equality \eqref{6.7} and the fact that the radius of the absorbing ball $\mathcal{Y}$ does not depend on $\nu$, there exists a constant $C>0$ independent of $\nu$ such that
\begin{align*}
\int_0^1\|\nabla\theta^{\nu_j}_j(t,\cdot)\|_{L^2}^2dt\le C,\qquad\int_0^1\|\nabla\theta^0_j(t,\cdot)\|_{L^2}^2dt\le C,
\end{align*}
for all $j$. This implies there  exists a sequence $t_j\in[0,1]$ such that
\begin{align}\label{bound on nablatheta}
\|\nabla\theta^{\nu_j}_j(\cdot,t_j)\|_{L^2}^2\le C,\qquad\|\nabla\theta^0_j(\cdot,t_j)\|_{L^2}^2\le C.
\end{align}
Since the interval $[0,1]$ is compact, passing to a subsequence and dropping a subindex, we can assume that $t_j\rightarrow t^*$ for some $t^*\in[0,1]$. And using \eqref{bound on nablatheta}, by compactness, we can pass to another subsequence and drop a subindex to obtain
\begin{align*}
\mbox{$\theta^{\nu_j}_j(\cdot,t_j)\rightarrow\bar{\theta}_0(\cdot)$, \,$\theta^0_j(\cdot,t_j)\rightarrow\bar{\bar{\theta}}_0(\cdot)$ in $L^2$,}
\end{align*}
for some $\bar{\theta}_0,\bar{\bar{\theta}}_0\in L^2$. 

For $\nu=0$, we consider solutions $\bar{\theta}$ and $\bar{\bar{\theta}}$ with $\bar{\theta}(\cdot,t^*)=\bar{\theta}_0$ and $\bar{\bar{\theta}}(\cdot,t^*)=\bar{\bar{\theta}}_0$. Note that we have $\theta^{\nu_j}(t_j)\rightarrow\bar{\theta}(t^*)$ in $L^2$ as $t_j\rightarrow t^*$. Following a similar proof of \eqref{continuity in nu}, there exists a constant $C=C(\bar{\theta}_0)>0$ independent of $\nu$ such that
\begin{align*}
\lim_{j\rightarrow\infty}\|\theta^{\nu_j}_j(\cdot,1)-\bar{\theta}(\cdot,1)\|_{L^2}\le\lim_{j\rightarrow\infty}C\int_{t^*}^1\nu_{j}ds=0.
\end{align*}
Similarly, we also have 
\begin{align*}
\lim_{j\rightarrow\infty}\|\theta^{0}_j(\cdot,1)-\bar{\bar{\theta}}(\cdot,1)\|_{L^2}=0.
\end{align*}
So we have shown that there is a sequence $j_n$ such that
\begin{align*}
\lim_{n\rightarrow\infty}\|\phi^{\nu_{j_n}}(\cdot)-\bar{\theta}(\cdot,1)\|_{L^2}=\lim_{n\rightarrow\infty}\psi_{j_n}(\cdot)-\bar{\bar{\theta}}(\cdot,1)\|_{L^2}=0.
\end{align*}
Hence $\bar{\theta}(\cdot,1)=\bar{\bar{\theta}}(\cdot,1)$ and we have $\|\phi^{\nu_{j_n}}-\psi_{j_n}\|_{L^2}\rightarrow0$ as $n\rightarrow\infty$, which contradicts \eqref{contradiction}.
\end{proof}
We are now ready to prove Theorem~\ref{existence of attractor}. 
\begin{proof}[Proof of Theorem~\ref{existence of attractor}] We define 
\begin{align*}
\mbox{$\mathcal{E}[T,\infty)=\{\theta(\cdot):\theta(\cdot)$ is a ``vanishing viscosity'' solution of \eqref{0.1}}\\
\mbox{on $[T,\infty)$ and $\theta\in\mathcal{Y}$ for all $t\in[T,\infty)\}$},
\end{align*}
\begin{align*}
\mbox{$\mathcal{E}(-\infty,\infty)=\{\theta(\cdot):\theta(\cdot)$ is a ``vanishing viscosity'' solution of \eqref{0.1}}\\
\mbox{on $(-\infty,\infty)$ and $\theta\in\mathcal{Y}$ for all $t\in(-\infty,\infty)\}$},
\end{align*}
then $\mathcal{E}$ is an {\it evolutionary system} (see \cite{C09} and \cite{CD14} for the definition), so by Theorem~4.5 in \cite{CD14}, there exists a weak global attractor $\mathcal{A}_w$ to $\mathcal{E}$ with
\begin{align}
\mbox{$\mathcal{A}_w = \{\theta_0 : \theta_0 = \theta(0)$ for some $\theta \in \mathcal{E}((-\infty,\infty))\}$}.
\end{align}
Furthermore, by the Aubin-Lions Lemma (also refer to \cite{CF89} for the case of Navier-Stokes equation), if $\theta_n(t)$ is any sequence of ``vanishing viscosity'' solutions of \eqref{0.1} such that $\theta_n(t)\in\mathcal{Y}$ for all $t\ge t_0$, then there exists a subsequence $\theta_{n_j}$ of $\theta_n$ that converges in $C([t_0,T];\mathcal{Y}_w)$ to some ``vanishing viscosity'' solution $\theta(t)$ (here $\mathcal{Y}_w$ refers to the metric space $(\mathcal{Y}, d_w)$). Applying the arguments given in \cite{CD14}, $\mathcal{E}$ satisfies all the following properties: 
\begin{itemize}
\item[A1] $\mathcal{E}([0, \infty))$ is a compact set in $C([0, \infty); \mathcal{Y}_w)$ ($\mathcal{Y}_w$ is endowed with the weak topology induced by $d_w$);
\item[A2] for any $\varepsilon>0$, there exists $\delta>0$ such that for every $\theta\in\mathcal{E}([0,\infty))$ and $t >0$, $$\|\theta(t)\|_{L^2}\le \|\theta(t)\|_{L^2} + \varepsilon,$$ for $t_0$ a.e. in $(t-\delta,t)\cap[0,\infty)$;
\item[A3] if $\theta_n\in\mathcal{E}([0,\infty))$ and $\theta_n\rightarrow\theta\in\mathcal{E}([0,\infty))$ in $C([0, \infty); \mathcal{Y}_w)$ for some $T>0$, then $\theta_n(t)\rightarrow\theta(t)$ strongly a.e. in $[0,T]$.
\end{itemize}
Therefore, together with Remark~\ref{absorbing ball remark}, Theorem~4.5 in \cite{CD14} can then be applied again to our evolutionary system $\mathcal{E}$, which implies that
\begin{itemize}
\item the strong global attractor $\mathcal{A}_s$ exists, it is strongly compact and $\mathcal{A}:=\mathcal{A}_s = \mathcal{A}_w$; and 
\item for any bounded set $\mathcal{B}\subset L^2(\mathbb{T}^3)$, and for any $\varepsilon,T>0$, there exists $t_0$ such that for any $t_1>t_0$, every ``vanishing viscosity'' solution $\theta(t)$ $$\|\theta(t)-x(t)\|_{L^2}<\varepsilon, \forall t\in[t_1,t_1+T],$$
for some complete trajectory $x(t)$ on the global attractor $(x(t)\in\mathcal{A}, \forall t\in(-\infty,\infty))$.
\end{itemize}
Finally, to prove that \eqref{uc_0} holds, we recall that $\pi^{\nu}: L^2\rightarrow L^2$ is the map $\pi^{\nu}\theta_0=\theta^{\nu}$, where $\theta^\nu$ is the solution to \eqref{1.1}-\eqref{1.5} given by Theorem~\ref{existence MG}. Then for each $\nu\in(0,1]$, $\pi^{\nu}$ is a semigroup $\{\pi^{\nu}(t)\}_{t\ge0}$ on $L^2$. And using the above argument, there exists a compact global attractor $\mathcal{A}^{\nu}\subset L^2$ for $\pi^{\nu}$ given by
\begin{align*}
\mbox{$\mathcal{A}^{\nu}= \{\theta_0 : \theta_0 = \theta^\nu(0)$, where $\theta^\nu$ is a solution of \eqref{1.1}-\eqref{1.5}}\\
\mbox{defined on $(-\infty,\infty)$ and $\theta^\nu\in\mathcal{Y}$ for all $t\in(-\infty,\infty)\}$}.
\end{align*}
Moreover, for $\nu=0$, $\mathcal{A}^0=\mathcal{A}$ is the global attractor for $\pi^{0}(\cdot)$ satisfying
\begin{itemize}
\item[$\cdot$] $\pi^{0}(t)\mathcal{A}=\mathcal{A}$ for all $t\in\R$; 
\item[$\cdot$] for any bounded set $\mathcal{B}$, $\sup_{\phi\in\pi^{0}(t)\mathcal{B}}\inf_{\psi\in\mathcal{A}}\|\phi-\psi\|_{L^2}\rightarrow0$ as $t\rightarrow0$.
\end{itemize}
We claim that the following conditions hold:
\begin{itemize}
\item[L1] $\pi_\nu(\cdot)$ has a global attractor $\mathcal{A}^{\nu}$ for every $\nu\in[0,1]$ in the {\it weak-$L^2$ sense}, which means that
\begin{itemize}
\item[$\cdot$] $\pi^{0}(t)\mathcal{A^\nu}=\mathcal{A^\nu}$ for all $t\in\R$; 
\item[$\cdot$] for any bounded set $\mathcal{B}$, $\sup_{\phi\in\pi^{0}(t)\mathcal{B}}\inf_{\psi\in\mathcal{A}}d_{w}(\phi,\psi)\rightarrow0$ as $t\rightarrow0$.
\end{itemize}
\item[L2] there is a {\it compact subset} $\mathcal{K}$ of $L^2$ in the weak topology induced by $d_w$ such that $\mathcal{A}^{\nu}\subset \mathcal{K}$ for every $\nu\in[0,1]$.
\item[L3] for $t>0$, $\pi^{\nu}(t)\theta_0$ is continuous in $\nu$, uniformly for $\theta_0$ in compact subsets of $L^2$.
\end{itemize}
Notice that Lemma~\ref{continuity in nu lemma} implies L3, so we only have to show L1 and L2:
\begin{itemize}
\item[$\cdot$] To show L1, we note that the absorbing ball $\mathcal{Y}$ as given by \eqref{absorbing ball} has radius which is independent of $\nu$, hence $\pi_\nu(\cdot)$ has a global attractor $\mathcal{A}^{\nu}$ for every $\nu\in[0,1]$ satisfying L1. 
\item[$\cdot$] To show L2, using A1, we have that $\mathcal{E}([0, \infty))$ is a compact set in $C([0, \infty); \mathcal{Y}_w)$, where $\mathcal{Y}_w$ refers to the metric space $(\mathcal{Y}, d_w)$ endowed with the weak topology induced by $d_w$. Hence we take $\mathcal{K}=\mathcal{Y}_w$ and $\mathcal{A}^{\nu}\subset \mathcal{K}$ for every $\nu\in[0,1]$.
\end{itemize}
In view of L1 to L3, the result from \cite{HOR15} implies the {\it weak} upper semicontinuity, namely
\begin{align}\label{weak upper continuity}
\mbox{$\sup_{\phi\in\mathcal{A}^\nu}\inf_{\psi\in\mathcal{A}}d_w(\phi,\psi)\rightarrow0$ as $\nu\rightarrow0$.}
\end{align}
Hence using Lemma~\ref{weak implies strong}, \eqref{weak upper continuity} further implies the {\it strong} upper semicontinuity given by \eqref{uc_0}. This completes the proof of Theorem~\ref{existence of attractor}.
\end{proof}


\subsection*{Acknowledgment} We thank Vlad Vicol for his very helpful advice. We also thank the referees for their most valuable comments. A.S. is partially supported by Hong Kong Early Career Scheme (ECS) grant project number 28300016. S.F. is partially supported by NSF grant DMS-1613135.

\end{document}